\newcommand{\be}{\begin{equation}}
\newcommand{\ee}{\end{equation}}
\newcommand{\R}{{\mathbb R}}
\newcommand{\C}{{\mathbb C}}
\numberwithin{equation}{section}
\numberwithin{figure}{section}
\newtheorem{theorem}{Theorem}[section]
\newtheorem{proposition}[theorem]{Proposition}
\newtheorem{lemma}[theorem]{Lemma}
\newtheorem{corollary}[theorem]{Corollary}
\newtheorem{definition}[theorem]{Definition}
\theoremstyle{remark}
\newtheorem{remark}[theorem]{Remark}
\begin{document}

\title[Transverse stability of solitary waves]{On the transverse stability of smooth solitary waves in a two-dimensional Camassa--Holm equation}

\author{Anna Geyer}
\address[A. Geyer]{Delft Institute of Applied Mathematics, Faculty Electrical Engineering, Mathematics and Computer Science, Delft University of Technology, Mekelweg 4, 2628 CD Delft, The Netherlands}
\email{A.Geyer@tudelft.nl}

\author{Yue Liu}
\address[Y. Liu]{Department of Mathematics, University of Texas at Arlington, Arlington, TX 76019, USA}
\email{yliu@uta.edu}

\author{Dmitry E. Pelinovsky}
\address[D.E. Pelinovsky]{Department of Mathematics and Statistics, McMaster University,	Hamilton, Ontario, Canada, L8S 4K1}
\email{dmpeli@math.mcmaster.ca}

\begin{abstract}
	We consider the propagation of smooth solitary waves in a two-dimensional generalization of the Camassa--Holm equation. We show that  transverse perturbations to one-dimensional solitary waves behave similarly to the KP-II theory. This conclusion follows from our two main results:  
	(i) the double eigenvalue of the linearized equations related to the translational symmetry breaks under a transverse perturbation into a pair of the asymptotically stable resonances and (ii) small-amplitude solitary waves  are linearly stable with respect to transverse perturbations. 
\end{abstract}

\date{\today}
\maketitle

\section{Introduction}

The Camassa--Holm equation, labelled as {\em the CH equation}, 
\begin{equation}
\label{eq-CH}
u_t - u_{txx} +  3uu_x  = 2u_x u_{xx} + u u_{xxx},
\end{equation}
is a popular model for the dynamics of unidirectional shallow water waves \cite{CH,Johnson} which has been justified mathematically in \cite{CL}. It was originally introduced in \cite{FF} as a deformation of the integrable KdV equations. The equation models the behavior of shallow water waves both in the setting of solitary and periodic waves. Global solutions exist for initial data with sufficiently gradual slopes and wave breaking occurs in finite time for initial data with steep slopes \cite{CE-1998,CE}. There exist smooth and peaked traveling waves both among the spatially solitary and periodic waves \cite{GMNP,Lenells}. The smooth solitary waves were shown to be spectrally and orbitally stable in the time evolution of the CH equation \cite{CS2002,LP-22}. Similar stability results were obtained for the traveling periodic waves in \cite{GMNP,Len4}. On the other hand, although the peaked traveling waves (both solitary and periodic) are energetically stable in the energy space $H^1$ \cite{CM,CS,Len1,Len2}, the local solutions are only defined in the function space 
$H^1 \cap W^{1,\infty}$ \cite{DKT,Linares}. It was recently shown that 
the peaked traveling waves are both spectrally and orbitally unstable 
in $H^1 \cap W^{1,\infty}$ \cite{LP-21,MP20,NP}.

As a model for shallow water waves, the CH equation (\ref{eq-CH}) is limited to  two-dimensional fluid motion confined by a one-dimensional time-dependent surface. Transverse modulations on the water surface can be defined in terms of the two spatial variables $(x,y) \in \R^2$. A generalization of the CH equation with a two-dimensional time-dependent profile $u = u(x,y,t)$ has appeared in the literature only recently. This equation can be written in its simplest dimensionless form as 
\begin{equation}
\label{eq-CHKP}
(u_t - u_{txx} +  3uu_x - 2u_xu_{xx} - uu_{xxx})_x + u_{yy} =0.
\end{equation}
It was first derived in \cite{Chen-2006} as a model in the context of nonlinear elasticity theory. More recently, it was obtained in \cite{Liu2021} as  a model in the context of incompressible and irrotational shallow water wave theory. We refer to (\ref{eq-CHKP}) as  {\em the  CH-KP equation} because it generalizes the CH equation (\ref{eq-CH}) in the same way 
as the Kadomtsev--Petviashvili (KP) equation generalizes the classical Korteweg--de Vries (KdV) equation \cite{KP-1970}. 

In the following we review some mathematical results that have been obtained for the CH-KP equation (\ref{eq-CHKP}) so far. Local 
existence of solutions was obtained in the space of functions $X^s(\R^2)$ with $s \geq 2$, where 
$$
X^s(\R^2) := \{ u \in H^s(\R^2) : \quad \partial_x^{-1} u \in H^s(\R^2), \;\; \partial_x u \in H^s(\R^2)\},
$$
see \cite[Theorem 1.1]{Liu2021}. The nonlocal operator $\partial_x^{-1}$ can be formally defined as
$$
(\partial_x^{-1}f)(x) :=\int_{+\infty}^xf(x')\,dx'
$$
for functions $f(x) : \R\to \R$ that decay to zero as $x \to +\infty$. This nonlocal operator can be used to rewrite  (\ref{eq-CHKP}) in the evolution form
\begin{equation}
\label{eq-CHKP-evolution}
u_t + (1-\partial_x^2)^{-1} \left[ 3uu_x - 2u_xu_{xx} - uu_{xxx} + \partial_x^{-1} u_{yy} \right] = 0.
\end{equation} 
The evolution equation \eqref{eq-CHKP-evolution} can be cast 
in Hamiltonian form
\begin{equation}
	\label{eq-CHKP-Hamiltonian}
	u_t=-J F'(u),
\end{equation}
with the skew-adjoint operator $J := \partial_x (1-\partial_x^2)^{-1}$
and the conserved energy
\begin{equation}
\label{Hamiltonian}
F(u):=\frac{1}{2}\int_{\R^2} \left[ u^3+uu_x^2+(\partial_x^{-1}u_y)^2\right] \,dx\,dy.
\end{equation}
It was shown in \cite{Liu2021} that 
$F(u)$ is conserved in time for  local solutions in $X^s(\R^2)$ for $s \geq 2$, 
and so is the momentum
\begin{equation}
\label{energy}
E(u):= \frac{1}{2}\int_{\R^2}  (u^2+u_x^2)\,dx \,dy.
\end{equation}
In addition to $F(u)$ and $E(u)$, the mass 
\begin{equation}
\label{mass}
M(u) := \int_{\R^2} u \,dx \,dy
\end{equation}
is formally conserved in the time evolution of the CH-KP equation (\ref{eq-CHKP}). Various wave breaking criteria were obtained in \cite[Theorems 1.2--1.4]{Liu2021}. A recent work \cite{Liu2023} explored numerical (Galerkin) methods for approximation of solitary waves in the CH--KP equation. 

{\em The purpose of this work is to study the transverse stability of perturbed 
solitary waves in the CH-KP equation (\ref{eq-CHKP}).} Line solitary waves are obtained for functions of the form $u(x,y,t) = \phi(x + \gamma y - ct)$ with  
parameters $\gamma,c\in\R$. In what follows, we will only consider 
the case $\gamma = 0$ for the traveling wave solutions 
of the CH equation (\ref{eq-CH}). 

It was  the motivation of the pioneering work \cite{KP-1970} to investigate 
the transverse stability of solitary waves under small slowly varying 
perturbations. It was discovered that the line solitary waves are transversely unstable in one version of the KP equation and are transversely stable in another version of the KP equation. These versions are now conventionally referred to as the {\em KP-I} and {\em KP-II} equations, respectively. The CH-KP equation \eqref{eq-CHKP} we are considering in the present work corresponds to KP-II. 

A rigorous proof of transverse stability of traveling waves in the KP-II equation was  completed only recently. Linear and nonlinear stability of the solitary waves have been proven for transversely periodic perturbations in \cite{MT} and for decaying perturbations in $\mathbb{R}^2$ \cite{M}. Linear stability of traveling periodic waves was shown in \cite{Haragus} and the nonlinear stability of periodic waves is still an open problem for the KP-II equation. 

Asymptotic reductions of other nonlinear systems to the KP-II equation have been 
explored in the literature. Mizumachi and Shimabukuro used the KP--II equation as an approximation of the Benney--Luke system to prove linear and nonlinear transverse stability of the line solitary waves of small amplitudes \cite{MS1,MS2}. 
A justification of the asymptotic reduction to the KP-II equation for the two-dimensional Boussinesq equation was done by Gallay and Schneider \cite{GalSchn}. 
In the recent series of papers \cite{Gallone,Hristov,PelSchn}, the KP-II equation was justified as the leading model for a two-dimensional Fermi--Pasta--Ulam system on a square lattice. See also \cite{Panley,Bruell} for recent work on transverse stability of line solitary waves in other generalizations of the KP equation. 

We can formally obtain the asymptotic reduction of the CH--KP equation 
to the KP-II equation. 
Let $k > 0$ be a fixed parameter and consider the slowly varying approximation of small-amplitude perturbations
of a constant background in the form
\begin{equation}
\label{scaling-KP}
u(x,y,t) = k + \varepsilon^2 v(\varepsilon (x-3kt), \varepsilon^2 y, \varepsilon^3 t).
\end{equation}
By using the chain rule and the evolution form (\ref{eq-CHKP-evolution}), we derive the following evolution equation for 
the variable $v = v(X,Y,T)$ in scaled coordinates as
\begin{equation*}
v_{T} + (1- \varepsilon^2 \partial_{X}^2)^{-1} \left[ 2k v_{X X X}
+ 3 v v_{X} + \partial_{X}^{-1} v_{Y Y}  - \varepsilon^2 (2
v_{X} v_{X X} + v v_{X X X}) \right] = 0.
\end{equation*}
The formal truncation at $\varepsilon = 0$ yields the KP-II equation in the form 
\begin{equation}
\label{KP-II}
v_{T} + 2k v_{X X X}
+ 3 v v_{X} + \partial_{X}^{-1} v_{Y Y} = 0.
\end{equation}
For every fixed $k > 0$, the line solitary waves are linearly and nonlinearly stable 
in the KP-II equation (\ref{KP-II}) \cite{M}. {\em The main conclusion of this work is that the smooth solitary waves are  linearly transversely stable also in the CH-KP equation (\ref{eq-CHKP-evolution}).} The nonlinear transverse stability is still an open question, and our  results on the linear transverse stability so far are limited to two claims:
	\begin{itemize}
		\item The transverse perturbation breaks the double zero eigenvalue of the linearized equations into a pair of resonances located in the left half-plane. This result is obtained for smooth solitary waves of arbitrary amplitude.
		
		\item The line solitary waves are linearly stable with respect to transverse perturbations if the wave amplitude is  sufficiently small.
	\end{itemize}
The precise statement of these two results will be given in Section \ref{sec-2}, after the traveling waves and their linear stability problems will be described. Sections \ref{sec-3} and \ref{sec-4} contain the proofs of these two main results. Section \ref{sec-conclusion} concludes the paper with a summary and a list of open questions for further studies. 

\section{Smooth solitary waves}
\label{sec-2}

We consider the traveling one-dimensional solitary waves described by solutions to
the CH-KP equation \eqref{eq-CHKP} of the form 
$$
u(x,y,t)=\phi(x-ct),
$$ 
where $\phi(x)\to k$ as  $|x| \to \infty$, for a fixed background parameter $k > 0$. It is well-known \cite{GMNP,LP-22}, see also \cite{CS2002, Lenells} for earlier results, that such solitary waves exist for $c > 3k$ and have a smooth profile $\phi \in C^{\infty}(\R)$.  
The following lemma formalizes the result. 

\begin{lemma}
	\label{lem-solitary-wave}
	Fix $k > 0$. For every $c > 3k$, there exists a traveling solitary wave solution of the CH equation (\ref{eq-CH}) with  profile 
	$\phi \in C^{\infty}(\R)$ of the form $\phi(x) = k + \psi(x)$, where $\psi$ is found from the first-order invariant
	\begin{equation}
	\label{psi-invariant}
	(\psi')^2 = \psi^2 \;\frac{c-3k-\psi}{c-k-\psi}.
	\end{equation}
In particular, $\psi(x) > 0$ for all $x \in \R$, 
$\psi(x) \to 0$ as $|x|\to\infty$ exponentially fast, and 
$\psi(x)$ is monotonically decreasing on both sides of its maximum 
at $\max\limits_{x \in \mathbb{R}} \psi(x) = c - 3k$.  
\end{lemma}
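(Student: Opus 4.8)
The plan is to reduce the CH equation to an ODE for the travelling profile and integrate it twice. Substituting $u(x,t)=\phi(\xi)$ with $\xi = x-ct$ into (\ref{eq-CH}) yields the third-order autonomous equation
\[
(c-\phi)\phi''' - 2\phi'\phi'' + (3\phi-c)\phi' = 0 .
\]
The first step is to integrate this once. Observing that $(c-\phi)\phi''' = \tfrac{d}{d\xi}\big[(c-\phi)\phi''\big] + \phi'\phi''$ and that the remaining terms are themselves exact derivatives, I would rewrite the whole left-hand side as a total derivative and integrate to obtain
\[
(c-\phi)\phi'' - \tfrac12(\phi')^2 + \tfrac32\phi^2 - c\phi = A .
\]
The constant $A$ is then fixed from the decay conditions $\phi \to k$, $\phi',\phi'' \to 0$ as $|\xi|\to\infty$, which give $A = \tfrac32 k^2 - ck$.

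For the second integration the key idea is the substitution $(\phi')^2 = g(\phi)$, so that $\phi'' = \tfrac12 g'(\phi)$ and the second-order equation becomes a first-order \emph{linear} ODE for $g$ as a function of $\phi$:
\[
(c-\phi)\,g'(\phi) - g(\phi) = -3\phi^2 + 2c\phi + 2A .
\]
Its general solution is the sum of the homogeneous part $C/(c-\phi)$ and a quadratic particular solution, and the constant $C$ is determined by $g(k)=0$ (since $\phi'\to 0$ as $\phi\to k$). Passing to the shifted variable $\psi = \phi-k$ and simplifying should then collapse $g$ exactly to the claimed first-order invariant (\ref{psi-invariant}).

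Having established (\ref{psi-invariant}), the qualitative statements follow from a phase-plane analysis of $(\psi')^2 = F(\psi)$ with $F(\psi) = \psi^2(c-3k-\psi)/(c-k-\psi)$. Writing $b = c-3k$ and $a = c-k$, the hypothesis $c>3k>0$ gives $a>b>0$, so $F(\psi)\ge 0$ precisely for $\psi\in[0,b]$, with $\psi=0$ and $\psi=b$ the only zeros in that range; this identifies the crest amplitude $\max\psi = c-3k$. I would build the homoclinic orbit by quadrature: the integral $\int d\psi/\sqrt{F(\psi)}$ converges at the simple zero $\psi=b$ (so the crest is reached at a finite point, placed at $\xi=0$ by translation) but diverges logarithmically at the double zero $\psi=0$ (so $\psi\to 0$ only as $|\xi|\to\infty$). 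Linearizing near $\psi=0$ gives $\psi' \approx \pm\sqrt{b/a}\,\psi$, which yields the exponential decay, and monotonicity on each side follows because $\psi'$ vanishes only at the crest.

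The main obstacle I anticipate is the smoothness claim $\phi\in C^\infty(\R)$, which must be verified at the crest where $\psi'=0$ and the quadrature representation degenerates. The resolution is to return to the second-order equation solved for $\psi''$ and to note that its coefficient $c-k-\psi = a-\psi$ stays bounded below by $a-b = 2k>0$ throughout the admissible range $\psi\in[0,b]$. Since the right-hand side is then analytic in $(\psi,\psi')$ with no singularity, standard ODE regularity upgrades the solution to $C^\infty$ (in fact real-analytic). This nonvanishing of $c-k-\psi$ is exactly what separates these smooth solitary waves from the peaked ones, which would require the crest to reach the singular level $\psi=c-k$; for $k>0$ this never occurs since $\max\psi = c-3k < c-k$.
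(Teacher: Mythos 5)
Your proposal is correct and shares the paper's overall skeleton (reduce to a third-order ODE, integrate twice, then do a phase-plane/quadrature analysis of the resulting first-order invariant), but the second integration is carried out by a genuinely different device. The paper multiplies the once-integrated equation by $(c-\phi)$ and integrates to obtain $-(c-\phi)^2(\phi''-\phi)=k(c-k)^2$, then multiplies by $\phi'$ and integrates once more; you instead substitute $(\phi')^2=g(\phi)$ and solve the resulting first-order \emph{linear} ODE $(c-\phi)g'-g=-3\phi^2+2c\phi+2A$ by homogeneous-plus-particular solution with $g(k)=0$. Both routes land on exactly the same invariant (your $g(\phi)=\phi^2+2ck-3k^2-\tfrac{2k(c-k)^2}{c-\phi}$ is the paper's equation rewritten), so this is a matter of technique, and your version has the advantage of making the structure of the general solution transparent. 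Your explicit regularity argument at the crest --- solving the second-order equation for $\phi''$ and noting $c-\phi\geq c-k-(c-3k)=2k>0$ along the orbit, so the vector field is analytic there --- is a genuine addition: the paper's proof never addresses the $C^\infty$ claim explicitly, and your observation that nonvanishing of $c-k-\psi$ is what separates smooth waves from peaked ones is exactly the right remark.

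One small inaccuracy to fix: it is not true that $F(\psi)=\psi^2(c-3k-\psi)/(c-k-\psi)\geq 0$ \emph{precisely} for $\psi\in[0,c-3k]$; one also has $F>0$ for all $\psi<0$ (and for $\psi>c-k$). This does not damage your existence proof, since the quadrature construction on $[0,c-3k]$ produces a positive homoclinic solution, which is all the lemma asserts; but if you want the stronger statement (as in the paper) that \emph{any} homoclinic orbit satisfies $\psi>0$, you need the additional observation that an orbit entering $\psi<0$ has no turning point there (since $F$ never vanishes for $\psi<0$), hence escapes monotonically and cannot return to the saddle at the origin. This is precisely the paper's remark that the stable and unstable curves at $(0,0)$ intersect only on the side $\psi>0$.
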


\begin{proof}
The traveling wave of the CH equation (\ref{eq-CH}) with  profile $\phi$ satisfies the third-order differential equation
\begin{equation*}
	-c(\phi'-\phi''') + 3\phi\phi' - 2\phi'\phi'' - \phi\phi'''=0,
\end{equation*}
which can either be integrated directly to give 
\begin{equation}
	\label{eq-tws}
	(c-\phi)(\phi-\phi'')+\frac{1}{2}(\phi')^2- \frac{1}{2} \phi^2 = k c - \frac{3}{2} k^2,
\end{equation}
or first multiplied by $(c-\phi)$ and then integrated  to give 
\begin{equation}
\label{eq-two}
	-(c-\phi)^2(\phi''-\phi) = k(c-k)^2.
\end{equation}
In both cases, we have fixed the integration constant from the conditions 
$\phi(x) \to k$ and $\phi'(x), \phi''(x) \to 0$ as $|x| \to \infty$. 
Multiplying (\ref{eq-two}) by $\phi'$ and integrating again gives 
\begin{equation}
\label{first-order}
\frac{1}{2} (\phi')^2 - \frac{1}{2} \phi^2 + \frac{k (c-k)^2}{(c-\phi)} = k c - \frac{3}{2} k^2.
\end{equation}
Writing $\phi = k + \psi$, we obtain (\ref{psi-invariant}) from (\ref{first-order}).

A solitary wave with $\psi(x) \to 0$ as $|x| \to \infty$ corresponds to 
a homoclinic orbit on the phase plane $(\psi,\psi')$ along the level curve 
(\ref{psi-invariant}) to the saddle point $(0,0)$. The solitary wave exists 
if and only if $c - 3 k > 0$, because $(0,0)$ is a center point for $c - 3 k < 0$ 
and no homoclinic orbit exists for $c - 3 k = 0$. Since $(0,0)$ is a saddle point 
for $c - 3k > 0$, the convergence rate of $\psi(x) \to 0$ as $|x| \to \infty$ is exponential.  The stable and unstable curves at $(0,0)$ do not intersect if $\psi < 0$ and intersect if $\psi> 0$. Hence $\psi(x) > 0$ for all $x \in \mathbb{R}$ and the turning point $x_0 \in \mathbb{R}$ with $\psi'(x_0) = 0$ exists if and only if $\psi(x_0) = c - 3k$. Thus, the profile $\psi$ is monotonically decreasing away from its maximum at $\max\limits_{x \in \mathbb{R}} \psi(x) = c - 3k$.  
\end{proof}

\begin{remark}
	Due to the translational symmetry of the CH equation we may place the maximum of $\psi$ at $x = 0$ such that $\psi(0) = c - 3k$.
	\label{rem-maximum}
\end{remark}

\begin{remark}
	\label{remark-reductionKdV}
Since the scaling (\ref{scaling-KP}) suggests a reduction of the CH-KP equation (\ref{eq-CHKP-evolution}) to the KP-II equation  (\ref{KP-II}), the traveling solitary wave 
of Lemma \ref{lem-solitary-wave} must converge to the traveling solitary wave of the KdV equation
\begin{equation}
\label{KdV}
v_{T} + 2k v_{X X X} + 3 v v_{X} = 0.
\end{equation}
Indeed, solving the KdV equation (\ref{KdV}) for the solitary wave profile with 
$$
v(X,T) = {\rm sech}^2\left(\frac{X-T}{2 \sqrt{2k}}\right)
$$ 
gives the formal asymptotic expansion 
\begin{equation}
\label{KdV-soliton}
\phi(x) = k + \varepsilon^2 {\rm sech}^2\left(\frac{\varepsilon x}{2 \sqrt{2k}}\right) + \mathcal{O}(\varepsilon^4), \quad c = 3k + \varepsilon^2,
\end{equation}
where $\varepsilon > 0$ is an arbitrary (small) parameter and $x$ stands for $x-ct$. 
The asymptotic limit to the solitary wave of small amplitude corresponds to the limit $c \to 3k$ for which $\varepsilon \to 0$. This reduction is made rigorous in Lemma \ref{lem-approx} below.
\end{remark}

In order to set up the linear transverse stability problem for the smooth solitary 
wave of Lemma \ref{lem-solitary-wave}, we consider the decomposition 
$$
u(x,y,t) = \phi(x-ct)+v(x-ct,y,t)
$$ 
with the perturbation $v$ to the solitary wave profile $\phi \in C^{\infty}(\R)$. After substitution of the decomposition into (\ref{eq-CHKP-evolution}) and neglecting the quadratic terms in $v$, we obtain the linearized equation  
\begin{equation}
\label{eq-linHam}
v_t = J (L - \partial_x^{-2}\partial_y^2)v,
\end{equation}
where $J := \partial_x (1-\partial_x^2)^{-1}$ as in (\ref{eq-CHKP-Hamiltonian}) and 
\begin{equation}
\label{lin-oper-L}
L := c-3\phi + \phi'' - \partial_x(c-\phi)\partial_x	.
\end{equation}
Separation of variables 
in the linearized equation (\ref{eq-linHam}) by using normal modes of the form
\begin{equation*}
v(x,y,t)= e^{\lambda t} e^{i\eta y} \hat v(x),
\end{equation*}
where $\lambda \in \C$ and $\eta \in \R$, yields the spectral stability problem
\begin{equation}
\label{eq-EVproblem}
J (L +\eta^2\partial_x^{-2}) \hat v  = \lambda \hat v.
\end{equation}
The one-dimensional spectral stability problem is recovered for $\eta = 0$.
We can now specify the following definition of  transverse spectral stability. 

\begin{definition}
	We say that the solitary wave with  profile $\phi \in C^{\infty}(\R)$ 
	is transversely spectrally stable if for every $\eta \in \mathbb{R}$ there exists no eigenvalue $\lambda \in \C$ with ${\rm Re}(\lambda) > 0$ 
and eigenfunction $\hat{v} \in {\rm Dom}(J (L +\eta^2\partial_x^{-2})) \subset L^2(\R)$ of the spectral stability problem (\ref{eq-EVproblem}). 
	\label{def-stability}
\end{definition}

A common method to study the linear stability of solitary waves in the KdV equation (\ref{KdV}) is 
to use the exponentially weighted space $L_{\nu}^2$ with fixed $\nu > 0$ 
\cite{Comech,PW94}, which is defined as
\begin{equation}
\label{weighted space}
L_{\nu}^2:=\{f(x) : \R \to \R : \quad e^{\nu \cdot} f\in L^2(\R)\}.
\end{equation}
If $f \in L_{\nu}^2$ with $\nu > 0$, then $f(x) \to 0$ as $x \to +\infty$ 
and so the nonlocal operator $\partial_x^{-1}$ is well-defined. Note however that $f(x)$ does not have to decay and may even be slowly growing as $x \to -\infty$. By using the exponentially weighted space $L^2_{\nu}$, we rephrase the definition of the transverse spectral  stability. 

\begin{definition}
	We say that the solitary wave with  profile $\phi \in C^{\infty}(\R)$ 
	is transversely asymptotically stable in $L^2_{\nu}$ for some $\nu > 0$ 
	if for every $\eta \in \mathbb{R}$, $\eta \neq 0$ there exists $b > 0$ such that all points $\lambda$ in the spectrum of the linear operator 
	$$
	J (L +\eta^2\partial_x^{-2}) : {\rm Dom}(J (L +\eta^2\partial_x^{-2})) \subset L^2_{\nu} \to L^2_{\nu}
	$$ 
	satisfy ${\rm Re}(\lambda) \leq -b$. 
	\label{def-asymptoticstability}
\end{definition}

The fact that $\phi(x) \to k$ as $|x| \to \infty$ exponentially fast greatly simplifies the spectral analysis of our problem. As a result, Weyl's theory implies that the continuous 
spectrum of $J(L + \eta^2 \partial_x^{-2})$ in $L^2_{\nu}$ is uniquely determined by 
the purely continuous spectrum of $J(L_0 + \eta^2 \partial_x^{-2})$, where 
\begin{equation}
\label{lin-oper-L0}
L_0 := c - 3 k - (c-k) \partial_x^2.
\end{equation}
In addition, the point spectrum of $J(L + \eta^2 \partial_x^{-2})$ in $L^2_{\nu}$ 
may contain eigenvalues $\lambda \in \C$ with  eigenfunctions $\hat{v}\in {\rm Dom}(J(L + \eta^2 \partial_x^{-2}))$.

{\em The first result of this paper is to show that both the continuous spectrum 
	and the two eigenvalues near the origin in the complex plane satisfy 
	the transverse asymptotic stability condition of Definition \ref{def-asymptoticstability}
	for some $\nu > 0$.} The proof is developed in Section \ref{sec-3}, where the continuous spectrum is computed with the help of the Fourier transform and the two eigenvalues are computed by using Puiseux expansions \cite{W} in the small parameter $\eta$.

\begin{theorem}
	\label{theorem-splitting}
	For every $c > 3k$, $\eta \in \mathbb{R}$, and $\nu \in (0,\nu_0)$ with $\nu_0 := \sqrt{\frac{c-3k}{c-k}}$, there exists $b_0 > 0$ such that all points $\lambda$ in the spectrum of the linear operator $J(L_0 + \eta_2 \partial_x^{-2})$ in $L^2_{\nu}$ satisfy 
	${\rm Re}(\lambda) \leq -b_0$. Furthermore, there exists $\eta_0 > 0$ such that the spectrum of the linear operator 
	$J(L + \eta^2 \partial_x^{-2})$ in $L^2_{\nu}$ with $\eta \in (-\eta_0,\eta_0)$ includes a pair of simple eigenvalues $\lambda_{\pm}(\eta)$ such that for $\eta \neq 0$ we have
	\begin{itemize}
		\item ${\rm Re}(\lambda_+(\eta)) = {\rm Re}(\lambda_-(\eta)) < 0$,
		\item ${\rm Im}(\lambda_+(\eta)) = -{\rm Im}(\lambda_-(\eta)) > 0$,
	\end{itemize}
and $\lambda_+(0) = \lambda_-(0) = 0$.
\end{theorem}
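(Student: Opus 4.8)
The plan is to treat the two assertions by different techniques: the claim about the constant-coefficient operator $J(L_0 + \eta^2\partial_x^{-2})$ is an explicit symbol computation, whereas the splitting of the double eigenvalue of $J(L+\eta^2\partial_x^{-2})$ is a perturbative (Puiseux) argument in the parameter $\eta$.

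For the first assertion I would pass to the weighted space by conjugating with the weight $e^{\nu x}$, which replaces $\partial_x$ by $\partial_x - \nu$ and turns the problem on $L^2_\nu$ into one on $L^2$ with constant-coefficient operators. Since $L_0$ and $J$ have constant coefficients, the Fourier transform diagonalizes the conjugated operator and its spectrum is the image of the curve $\xi \mapsto \lambda(\xi)$ obtained through the substitution $\partial_x \mapsto i\xi - \nu$ with $\xi\in\R$; writing $\mu := -\nu + i\xi$, the symbol is $\lambda(\xi) = \frac{\mu}{1-\mu^2}\bigl[(c-3k) - (c-k)\mu^2 + \eta^2\mu^{-2}\bigr]$. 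I would then express $\Re\lambda(\xi)$ as a rational function of $\xi$ and show that it is strictly negative and bounded away from zero uniformly in $\xi$. At $\xi = 0$ one has $\mu = -\nu$ and, since $\nu < \nu_0 < 1$ forces $1-\nu^2 > 0$, the sign of $\Re\lambda(0)$ is governed by $(c-3k) - (c-k)\nu^2 + \eta^2\nu^{-2}$, which is positive precisely when $\nu < \nu_0 = \sqrt{(c-3k)/(c-k)}$; this identifies $\nu_0$ as the sharp weight and explains its appearance in the statement. The remaining work is to verify negativity for $\xi \neq 0$ and to control the behaviour as $\xi \to \pm\infty$, after which $b_0$ is obtained as the (attained) supremum of $\Re\lambda$.

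For the second assertion I would first pin down the Jordan structure at $\eta = 0$. Differentiating the traveling-wave equation shows $L\phi' = 0$, hence $JL\phi' = 0$, so $\varphi_0 := \phi'$ spans the geometric kernel. Differentiating the profile relation \eqref{eq-tws} in $c$ gives $L\partial_c\phi = -(\phi - \phi'') + k$, and since $J$ annihilates constants while $J(\phi-\phi'') = \phi'$, the function $\varphi_1 := -\partial_c\phi$ satisfies $JL\varphi_1 = \varphi_0$; thus the zero eigenvalue has algebraic multiplicity two within a single Jordan block. The adjoint chain is built from $\ker(JL)^\ast = \ker(LJ)$, which yields $\chi_1 \propto \phi - \phi''$. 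Writing the perturbation as $\eta^2 V$ with $V := J\partial_x^{-2}$ and expanding $\lambda = \eta\lambda_1 + \eta^2\lambda_2 + \dots$, the solvability condition at order $\eta^2$ gives $\lambda_1^2 = \langle\chi_1, V\varphi_0\rangle$ in the normalization $\langle\chi_1,\varphi_1\rangle = 1$. A short computation using $V\varphi_0 = (1-\partial_x^2)^{-1}\psi$ reduces this to $\lambda_1^2 = \langle\phi,\psi\rangle/(-\partial_c P)$, where $\langle\phi,\psi\rangle = k\int\psi + \int\psi^2 > 0$ and $P := k\int\psi + \tfrac12\int(\psi^2 + (\psi')^2)$; establishing $\partial_c P > 0$ then forces $\lambda_1^2 < 0$, so that $\lambda_\pm(\eta) = \pm i|\lambda_1|\,\eta + O(\eta^2)$ is purely imaginary at leading order. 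Because the spectral problem \eqref{eq-EVproblem} has real coefficients, eigenvalues occur in complex-conjugate pairs, which immediately yields $\Re\lambda_+ = \Re\lambda_-$ and $\Im\lambda_+ = -\Im\lambda_-$, while the distinctness of the two branches for $\eta\neq 0$ gives simplicity.

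The main obstacle is twofold. First, the perturbation theory must be carried out in the weighted space $L^2_\nu$ rather than in $L^2$: the generalized eigenvector $\partial_c\phi$ and the adjoint element $\phi - \phi''$ do not both decay (the latter tends to $k$), so I must check that the relevant pairings converge and that the reduction to the two-dimensional generalized eigenspace is legitimate against the essential spectrum placed, by the first part, strictly in the left half-plane. Second, and most importantly, the sign of the real part is revealed only at the next order: I must compute $\Re\lambda_2$ explicitly and show $\Re\lambda_2 < 0$, which is precisely the assertion that the resonances are asymptotically stable. I expect this computation, together with the verification that $\partial_c P > 0$, to carry the bulk of the analysis, since it is here that the transverse-stability mechanism of KP-II type must manifest itself.
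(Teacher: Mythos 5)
Your treatment of the constant-coefficient part (the Fourier symbol of $J(L_0+\eta^2\partial_x^{-2})$ after conjugation by the weight) is essentially the paper's argument and is fine. The perturbative part, however, contains a genuine error: you misidentify the adjoint null vector. On $L^2_\nu$ the Fredholm pairing is against the dual space $L^2_{-\nu}$, and the co-kernel of $JL$ consists of those $\chi$ with $J\chi\in\ker L$ that actually lie in $L^2_{-\nu}$. Because $J$ annihilates constants, both $\phi-\phi''$ and $\phi-\phi''-k$ formally satisfy $LJ\chi=0$, but only $\chi_1:=\phi-\phi''-k$ decays as $x\to-\infty$ and belongs to $L^2_{-\nu}$; the pairing of $\phi-\phi''$ (which tends to the constant $k$) with a general element of $L^2_\nu$ does not even converge, so it cannot enter the solvability condition. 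The constant matters quantitatively: with the correct $\chi_1$ one computes $\langle \chi_1, V\varphi_0\rangle = \langle \psi,\psi\rangle = \|\psi\|_{L^2}^2$ and $\langle \chi_1,\varphi_1\rangle = -\frac{d}{dc}E_{\rm 1D}(\psi)$, hence $\lambda_1^2 = -\|\psi\|_{L^2}^2 / \frac{d}{dc}E_{\rm 1D}(\psi)$, which is (\ref{lambda-1}) in the paper; your formula instead gives $\lambda_1^2 = -\bigl(k M_{\rm 1D}(\psi)+\|\psi\|_{L^2}^2\bigr)/\bigl(k\frac{d}{dc}M_{\rm 1D}(\psi)+\frac{d}{dc}E_{\rm 1D}(\psi)\bigr)$. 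These genuinely disagree: in the small-amplitude limit the correct value behaves as $\lambda_1^2\sim -\tfrac{4}{3}(c-3k)$, consistent with the exact KP-II resonance (\ref{resonance-KP}), whereas yours gives $\lambda_1^2 \sim -2(c-3k)$. Both happen to be negative, so your $\mathcal{O}(\eta)$ conclusion (purely imaginary splitting) survives, but the error contaminates every subsequent order. Your own worry about ``pairings converging'' was the right instinct; its resolution is not a convergence check but a correction of the adjoint vector itself.

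The second, and more serious, issue is that the assertion which actually constitutes the theorem, $\Re(\lambda_\pm(\eta))<0$ for $\eta\neq 0$, is decided only at order $\eta^2$, and you leave that computation entirely undone, stating only that it ``must'' be carried out. In the paper this is the heart of the proof of Lemma \ref{lemma-splitting}: the solvability condition at $\mathcal{O}(\eta^3)$ yields
\[
2\lambda_2 \;=\; \frac{\frac{d}{dc}M_{\rm 1D}(\psi)}{2\left(\frac{d}{dc}E_{\rm 1D}(\psi)\right)^2}\left[ \|\psi\|_{L^2}^2\, \frac{d}{dc}M_{\rm 1D}(\psi) \;-\; 2\, M_{\rm 1D}(\psi)\, \frac{d}{dc}E_{\rm 1D}(\psi)\right],
\]
and its negativity for \emph{all} $c>3k$ (not merely small amplitude) is then established by evaluating $M_{\rm 1D}(\psi)$, $E_{\rm 1D}(\psi)$ and $\|\psi\|_{L^2}^2$ in closed form from the first-order invariant (\ref{psi-invariant}) and using the elementary inequality $\log(\xi_0+\sqrt{1+\xi_0^2})<\xi_0\sqrt{1+\xi_0^2}$. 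Nothing in your proposal supplies this sign; moreover, since $\lambda_1^2$ enters the $\mathcal{O}(\eta^3)$ solvability condition, running this step with your incorrect $\lambda_1^2$ would produce a wrong $\lambda_2$ as well. As it stands, the proposal proves neither the negativity of the real parts nor the correct leading-order location of the resonances.
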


\begin{remark}
The result of Theorem \ref{theorem-splitting} is consistent with the transverse asymptotic stability with respect to long transverse perturbations in the sense of Definition \ref{def-asymptoticstability} with small $\eta \neq 0$. 
However, the spectrum of $JL$ in $L^2_{\nu}$ might include more than the continuous spectrum and the double zero eigenvalue. There might exist additional embedded eigenvalues of $JL$ in $L^2(\R)$ on the imaginary axis which could become isolated in $L^2_{\nu}$ for $\nu > 0$.  The latter possibility has been 
ruled out for the KdV equation (\ref{KdV}), see \cite{PW92,PW94}. However, nothing is known about the existence of additional embedded eigenvalues of $JL$ in $L^2(\R)$ on $i\R$ for the CH equation (\ref{eq-CH}).
\end{remark}

\vspace{1cm}
 
{\em The second result of this paper explores the small-amplitude limit of 
the solitary waves and provides transverse asymptotic stability for solitary waves 
of small amplitudes in the sense of Definition \ref{def-asymptoticstability}.}
The proof is developed in Section \ref{sec-4} based on estimates for the resolvent equation.

\begin{theorem}
	\label{theorem-KP}
	Let $\lambda_{\pm}(\eta)$ be the simple eigenvalues of $J (L + \eta^2 \partial_x^{-2})$ in $L^2_{\nu}$ for fixed $\nu \in (0,\nu_0)$ found in Theorem \ref{theorem-splitting}. There exists $\varepsilon_0 > 0$ and $\beta_0 > 0$
	such that for every $\varepsilon \in (0,\varepsilon_0)$, where $\varepsilon := \sqrt{c-3k}$, and for every $\eta \in \mathbb{R}$, $\eta \neq 0$, the spectrum of $J (L + \eta^2 \partial_x^{-2})$ in $L^2_{\nu}$ is contained in 
	$$
	\mathcal{S} := \{ \lambda \in \mathbb{C}: \quad {\rm Re}(\lambda) \leq -\beta_0 \varepsilon^3 \},
	$$
with the exception of the two simple eigenvalues $\lambda = \lambda_{\pm}(\eta)$.
\end{theorem}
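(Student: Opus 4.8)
The plan is to exploit the small-amplitude scaling that turns the CH--KP spectral problem into the KP-II spectral problem, and to transfer the transverse spectral stability of the KP-II line soliton \cite{M} to the CH--KP operator while keeping the $\varepsilon^3$ time scale explicit. Throughout I use the KdV scaling of Remark \ref{remark-reductionKdV}: I set $X=\varepsilon x$, write $\psi=\varepsilon^2\tilde\psi(X)$ with $\tilde\psi$ the rescaled profile of Lemma \ref{lem-approx}, and rescale the parameters as $\lambda=\varepsilon^3\Lambda$ and $\eta=\varepsilon^2\zeta$. Since $\nu_0=\sqrt{(c-3k)/(c-k)}=\mathcal{O}(\varepsilon)$, the weight must be taken as $\nu=\varepsilon\mu$ with $\mu$ in a fixed compact subinterval of $(0,1/\sqrt{2k})$, so that $L^2_\nu$ becomes the fixed weighted space $L^2_\mu$ in $X$. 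A direct computation from \eqref{lin-oper-L}, \eqref{eq-EVproblem} and the expansion \eqref{KdV-soliton} then gives
\begin{equation*}
\varepsilon^{-3}J(L+\eta^2\partial_x^{-2})=\partial_X(1-\varepsilon^2\partial_X^2)^{-1}\left[\Lambda_{\mathrm{KdV}}+\zeta^2\partial_X^{-2}+\varepsilon^2 R_\varepsilon\right],
\end{equation*}
where $\Lambda_{\mathrm{KdV}}:=1-3\tilde\psi-2k\partial_X^2$ and $R_\varepsilon$ collects the term $-\partial_X((1-\tilde\psi)\partial_X)$ together with the $\mathcal{O}(\varepsilon^2)$ profile corrections, all with smooth, exponentially localized coefficients. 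At $\varepsilon=0$ the right-hand side is the KP-II operator $\mathcal{A}(\zeta):=\partial_X\Lambda_{\mathrm{KdV}}+\zeta^2\partial_X^{-1}$, whose transverse spectral stability is established in \cite{M}.

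I would treat the continuous and point spectrum separately. For the continuous spectrum, Theorem \ref{theorem-splitting} reduces the problem to the constant-coefficient operator $J(L_0+\eta^2\partial_x^{-2})$, whose spectrum in $L^2_\nu$ is the curve
\begin{equation*}
\lambda(\xi)=\frac{p}{1-p^2}\left((c-3k)-(c-k)p^2+\frac{\eta^2}{p^2}\right),\qquad p=i\xi-\nu,\quad \xi\in\R .
\end{equation*}
The transverse term equals $\eta^2/(p-p^3)$, whose real part has the sign of $\Re(p-p^3)=-\nu(1+3\xi^2-\nu^2)<0$; hence the transverse perturbation only decreases $\Re(\lambda)$ and it suffices to bound the case $\eta=0$. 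Substituting $\nu=\varepsilon\mu$ and $c-k=2k+\varepsilon^2$, a calculus estimate shows that $\Re(\lambda(\xi))$ attains its maximum over $\xi$ at $\xi=0$, with value $-\varepsilon^3\mu(1-2k\mu^2)+\mathcal{O}(\varepsilon^5)$, while the large-frequency asymptotics give $\Re(\lambda)\to-(c-k)\nu=\mathcal{O}(\varepsilon)$, which is far more negative. Thus $\Re(\lambda)\le-\beta_0\varepsilon^3$ on the whole continuous spectrum for any $\beta_0<\mu(1-2k\mu^2)$ and $\varepsilon$ small.

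For the point spectrum I would use a resolvent perturbation argument fibered over $\zeta$. Fix $\zeta$ in a bounded set --- this is the KP-II regime $\eta=\mathcal{O}(\varepsilon^2)$ in which the eigenvalues $\lambda_\pm(\eta)$ reside --- and let $\Lambda_\pm(\zeta)$ denote the rescaled $\lambda_\pm(\eta)$. The goal is to show that for $\Re(\Lambda)\ge-\beta_0$ with $\Lambda$ outside fixed small disks about $\Lambda_\pm(\zeta)$, the operator $\varepsilon^{-3}J(L+\eta^2\partial_x^{-2})-\Lambda$ is boundedly invertible on $L^2_\mu$, uniformly for small $\varepsilon$. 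The input from \cite{M} is the fiberwise resolvent bound for KP-II: in $L^2_\mu$ the only spectrum of $\mathcal{A}(\zeta)$ with $\Re(\Lambda)\ge-\beta_0$ is the pair $\Lambda_\pm(\zeta)$, and $(\mathcal{A}(\zeta)-\Lambda)^{-1}$ is bounded away from these points. I would then split the $X$-frequency into $|\kappa|\le M$ and $|\kappa|>M$ with a smooth cutoff. On the low-frequency block the difference between $\varepsilon^{-3}J(L+\eta^2\partial_x^{-2})$ and $\mathcal{A}(\zeta)$ has operator norm $\mathcal{O}(\varepsilon^2)$, so a Neumann series closes against the KP-II resolvent; on the high-frequency block the real part of the rescaled symbol is $\le-c_1M^2$ (the localized coefficients entering as a small, relatively bounded perturbation), which lies below $-\beta_0$ once $M$ is fixed large, giving invertibility directly. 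Gluing the two blocks yields the uniform resolvent bound and hence the absence of spectrum in $\{\Re(\Lambda)\ge-\beta_0\}$ other than $\Lambda_\pm(\zeta)$. For transverse wavenumbers bounded away from zero, $\eta\neq\mathcal{O}(\varepsilon^2)$, the term $\eta^2\partial_x^{-2}$ dominates and contributes a real part of order $-\eta^2\nu$ lying far below $-\beta_0\varepsilon^3$, so the corresponding resolvent bound follows directly and no new eigenvalues appear. Unwinding the scaling gives the statement with $\mathcal{S}=\{\Re(\lambda)\le-\beta_0\varepsilon^3\}$.

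The main obstacle is the high-frequency behavior of the singular factor $(1-\varepsilon^2\partial_X^2)^{-1}$, which is close to the identity only for $|\kappa|\ll\varepsilon^{-1}$: the correction $\varepsilon^2\partial_X S_\varepsilon\Lambda_{\mathrm{KdV}}$ with $S_\varepsilon:=\partial_X^2(1-\varepsilon^2\partial_X^2)^{-1}$ is of the same order as $\mathcal{A}(\zeta)$ near $|\kappa|\sim\varepsilon^{-1}$, so a naive perturbation off the KP-II operator cannot be valid there. Choosing the cutoff $M$ and matching the low- and high-frequency resolvent estimates uniformly in $\varepsilon$, and reconciling the KP-II comparison regime (bounded $\zeta$) with the strongly dissipative regime (large $\zeta$, i.e. $\eta$ bounded away from zero) so that the single threshold $-\beta_0\varepsilon^3$ is valid for every $\eta\neq0$, is the technical heart of the argument.
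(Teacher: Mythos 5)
Your overall architecture matches the paper's: the KP-II rescaling $\lambda=\varepsilon^3\Lambda$, $\eta=\varepsilon^2\Upsilon$, $\nu=\varepsilon\rho$ (Lemma \ref{lem-approx} and (\ref{scaling-KP-lin})--(\ref{decomp-kdv})), a Fourier computation for the constant-coefficient part, the KP-II resolvent bound of \cite{M,MS1} with a Lyapunov--Schmidt persistence of $\Lambda_{\pm}(\Upsilon)$ in the bounded-$\Upsilon$ regime (the paper's Lemmas \ref{lem-persistence-resonance}--\ref{lem-low-frequency}), and a separate argument for larger transverse frequencies. However, there is a genuine gap precisely where the paper does its hardest work: the regime $|\eta|\geq K_0^2\varepsilon^2$. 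You dismiss it by saying that $\eta^2\partial_x^{-2}$ contributes a real part of order $-\eta^2\nu$ ``lying far below $-\beta_0\varepsilon^3$'', so that ``the resolvent bound follows directly and no new eigenvalues appear.'' This is quantitatively false at the borderline: for $\eta\sim K_0^2\varepsilon^2$ one has $\eta^2\nu\sim K_0^4\rho\,\varepsilon^5\ll\varepsilon^3$, and in any case the contribution of $\eta^2\partial_x^{-2}$ to ${\rm Re}(\lambda(\xi))$ decays like $\eta^2\nu/\xi^4$ at large longitudinal frequencies, so it is not a uniform shift. More fundamentally, strong negativity of the constant-coefficient symbol does not by itself exclude eigenvalues: the localized potential $A_2=\partial_x(1-\partial_x^2)^{-1}(-3\psi+\psi'')$ has size $\varepsilon^2$ in $L^2_{\varepsilon\rho}$, while the spectral window has width $\varepsilon^3$, and the paper points out explicitly that the naive product $\|(\lambda-A_0-A_1)^{-1}A_2\|$ is $O(1)$ --- smallness ``cannot be deduced from smallness of $\varepsilon$'' (combine (\ref{estimate-2-new}) with (\ref{estimate-4})). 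This is exactly the mechanism that produces the eigenvalues $\lambda_{\pm}$ in the first place, so ruling out further eigenvalues requires a real argument. The paper's resolution is the refined composition estimate of Proposition \ref{prop-estimate-new}: for $|\Upsilon|\geq K_0^2$ the smoothing factor composed with the resolvent gains both a square-root of the spectral distance and a factor $K_0^{-1}$, i.e.\ (\ref{estimate-3-new}), after which the Neumann series in Lemma \ref{lemma-high} closes with bound (\ref{A2small}) $\leq C_0K_0^{-1}$, small by largeness of $K_0$, not smallness of $\varepsilon$.

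A secondary point: you locate ``the main obstacle'' in the behavior of $(1-\varepsilon^2\partial_X^2)^{-1}$ at $X$-frequencies $|\kappa|\sim\varepsilon^{-1}$, but that region is actually benign --- there the real part of the constant-coefficient symbol is of order $-\varepsilon^{-2}$ (see (\ref{Lambda-expression})--(\ref{bound-Xi})), which dominates the $O(\varepsilon^{-1})$ size of the perturbation. The true obstruction sits at \emph{moderate} $X$-frequencies, where the free resolvent is only $O(\beta_0^{-1})$ and the rescaled potential $-3\Psi_{\rm KdV}$ is $O(1)$; your fixed-$M$ frequency cutoff does not address this, and the parenthetical claim that the localized coefficients enter ``as a small, relatively bounded perturbation'' on the high block is asserted rather than proved. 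In short, your proposal reproduces the continuous-spectrum computation and the skeleton of the low-frequency comparison with KP-II, but the intermediate regime $K_0^2\varepsilon^2\leq|\eta|\ll 1$ --- the technical heart you yourself flag as unresolved --- is left without the key estimate that the paper supplies in Propositions \ref{prop-estimate}--\ref{prop-estimate-new} and Lemma \ref{lemma-high}.
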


\begin{remark}
	Since ${\rm Re}(\lambda_{\pm}(\eta)) < 0$ for $\eta \in (-\eta_0,\eta_0)$, $\eta \neq 0$, the solitary waves of small amplitude 
	are transversely asymptotically stable in $L^2_{\nu}$. By using the Fourier transform in $y$, the result of Theorem \ref{theorem-KP} also implies the transverse asymptotic stability of these solitary waves with respect to perturbations in $L^2_{\nu}(\mathbb{R}^2)$, where the weight $\nu \in (0,\nu_0)$ is only applied in the direction of the solitary waves. This yields  linear asymptotic stability of solutions to the evolution equation (\ref{eq-linHam}) in $L^2_{\nu}(\mathbb{R}^2)$ by  semi-group theory.
\end{remark}

\section{Proof of Theorem \ref{theorem-splitting}}
\label{sec-3}

\subsection{Preliminary results}

The one-dimensional CH equation (\ref{eq-CH}) has the following conserved quantities which play a crucial role in the stability analysis of
its traveling solitary and periodic waves \cite{CS2002,GMNP}:
\begin{align*}
&\hat F(u):= \frac{1}{2}\int_{\R} (u^3 + uu_x^2 -k^3) \,dx, \\
&\hat E(u):= \frac{1}{2}\int_{\R} (u^2 + u_x^2 -k^2) \,dx, \\
&\hat M(u):=\int_{\R} (u-k) \,dx. 
\end{align*}
The constant values have been subtracted from the integrands to ensure that the integrals converge if $u(x) \to k$ as $|x| \to \infty$ sufficiently fast. These quantities are the one-dimensional analogues of the conserved quantities (\ref{Hamiltonian}), (\ref{energy}), and (\ref{mass}) 
of the two-dimensional CH-KP equation (\ref{eq-CHKP}).
Using $\hat{F}$, $\hat{E}$, and $\hat{M}$ we define the augmented energy
\begin{equation*}
\Lambda_c(u) := -\hat{F}(u) + c \hat{E}(u) - \left( ck  - \frac{3}{2} k^2 \right) \hat{M}(u).
\end{equation*} 
Smooth solutions to the second-order equation (\ref{eq-tws}) 
with the profile $\phi \in C^{\infty}(\R)$ are critical points of $\Lambda_c$ 
in the sense that the first variation vanishes:
$$
\Lambda'_c(\phi) = -\frac{3}{2} \phi^2 + \frac{1}{2} (\phi')^2 + \phi \phi'' + c \phi - c \phi'' - c k + \frac{3}{2} k^2 = 0.
$$
The linear operator $L$ in (\ref{lin-oper-L}) is the Hessian operator 
of $\Lambda_c$ at the critical point with the profile $\phi \in C^{\infty}(\R)$. This variational characterization of the traveling wave solutions was explored in the stability analysis in \cite{CS2002,GMNP}, see also \cite{LP-22} for alternative variational characterizations of the traveling wave solutions in the CH equation (\ref{eq-CH}).

\begin{remark}
Since the linear operator $L$ in (\ref{lin-oper-L}) is the Hessian operator $\Lambda_c''(\phi)$ at the traveling solitary wave with the profile $\phi \in C^{\infty}(\R)$ given by Lemma \ref{lem-solitary-wave}, it also arises in the linearization of 
the CH equation (\ref{eq-CH}) given by $v_t = JL v$. 	
\end{remark}

If $\phi = k + \psi$, then 
\begin{align}
E_{\rm 1D}(\psi) &:= \hat E(\phi) - k \hat M(\phi) \notag \\ 
&= \frac{1}{2}\int_{\R} [(k+\psi)^2+ (\psi')^2-k^2 - 2k \psi] dx \notag\\
&= \frac{1}{2} \int_{\R} [ (\psi')^2 + \psi^2] dx	
\label{eq-hatEM}
\end{align}
and 
\begin{equation}
\label{eq-hatM}
M_{\rm 1D}(\psi) := \hat M(\phi) = \int_{\R} \psi dx.
\end{equation}
The following lemma reports important monotonicity properties
of $E_{\rm 1D}(\psi)$ and $M_{\rm 1D}(\psi)$ with respect to the parameter $c \in (3k,\infty)$ for fixed $k > 0$. The proof is based on direct computations.

\begin{lemma}
	\label{lem-monotonicity}
	For fixed $k > 0$, let $\psi$ be the solitary wave defined
	by the first-order invariant (\ref{psi-invariant}). Then, the mappings
	$c \mapsto M_{\rm 1D}(\psi)$ and $c \mapsto E_{\rm 1D}(\psi)$ are monotonically
	increasing for every $c \in (3k,\infty)$.
\end{lemma}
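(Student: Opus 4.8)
The plan is to reduce both functionals to integrals over the \emph{fixed} interval $[0,1]$, in which the dependence on $c$ sits entirely inside the integrand, and then to differentiate under the integral sign. Since the profile $\psi$ is even and, by Lemma~\ref{lem-solitary-wave}, monotonically decreasing away from its maximum $\psi(0) = c-3k$, I would first change the variable of integration from $x$ to $\psi$ on each half-line via $dx = d\psi/|\psi'|$, using the first-order invariant (\ref{psi-invariant}) in the form $|\psi'| = \psi\sqrt{(c-3k-\psi)/(c-k-\psi)}$ together with $\psi > 0$. This yields
\[
M_{\rm 1D}(\psi) = 2\int_0^{c-3k}\sqrt{\frac{c-k-\psi}{c-3k-\psi}}\,d\psi,
\]
and, after computing $(\psi')^2 + \psi^2 = 2\psi^2(c-2k-\psi)/(c-k-\psi)$ from (\ref{psi-invariant}),
\[
E_{\rm 1D}(\psi) = 2\int_0^{c-3k}\frac{\psi\,(c-2k-\psi)}{\sqrt{(c-k-\psi)(c-3k-\psi)}}\,d\psi.
\]

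Next I would set $a := c-3k > 0$ and substitute $\psi = a(1-t)$ with $t \in [0,1]$, which maps the $c$-dependent endpoint to a fixed one and turns the three linear factors into $c-3k-\psi = at$, $c-k-\psi = at+2k$, and $c-2k-\psi = at+k$. The integrals become
\[
M_{\rm 1D} = 2\int_0^1\frac{\sqrt{a^2 t + 2ak}}{\sqrt{t}}\,dt, \qquad E_{\rm 1D} = 2a^2\int_0^1\frac{(1-t)(at+k)}{\sqrt{at(at+2k)}}\,dt.
\]
The integrands are smooth in $a$ on compact subsets of $(0,\infty)$ and possess only an integrable $t^{-1/2}$ singularity at $t=0$, locally uniformly in $a$, so differentiation under the integral sign in $a$ (equivalently in $c$, since $da/dc = 1$) is justified by dominated convergence.

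For the mass this is immediate, since
\[
\frac{dM_{\rm 1D}}{dc} = 2\int_0^1\frac{at+k}{\sqrt{t}\,\sqrt{a^2 t+2ak}}\,dt > 0
\]
has a manifestly positive integrand for $a,k,t > 0$. The energy is the only place requiring genuine work. Differentiating the integrand of $E_{\rm 1D}$ in $a$ and collecting everything over the common denominator $[at(at+2k)]^{3/2}$, I would reduce the numerator, writing $u := at$, to $2a^2 t\,(1-t)\,(2u^2 + 6ku + 3k^2)$; the decisive cancellation is the algebraic identity $(u+2k)(3u+2k) - (u+k)^2 = 2u^2 + 6ku + 3k^2$. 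This gives
\[
\frac{dE_{\rm 1D}}{dc} = \int_0^1\frac{2a^2 t\,(1-t)\,(2(at)^2 + 6k(at) + 3k^2)}{[at(at+2k)]^{3/2}}\,dt > 0,
\]
since every factor is nonnegative on $[0,1]$ and strictly positive on $(0,1)$.

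The main obstacle is precisely the sign of $dE_{\rm 1D}/dc$: because $a = c-3k$ enters both the numerator and the denominator under the square root, no monotonicity is visible before the derivative is fully expanded, and one must confirm that the quadratic $2u^2 + 6ku + 3k^2$ is positive. The rescaling to the fixed interval $[0,1]$ is what makes this tractable, as it removes the need to differentiate a $c$-dependent endpoint at which the integrand is singular and isolates the positivity question into a single elementary algebraic check.
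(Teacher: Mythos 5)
Your proof is correct, and after the common first step it takes a genuinely different route from the paper. Both arguments begin identically: you use evenness, monotonicity, and the invariant (\ref{psi-invariant}) to convert $M_{\rm 1D}$ and $E_{\rm 1D}$ into integrals in the variable $\psi$ over $[0,c-3k]$, and your two integral expressions match the paper's exactly. From there the paper substitutes $z = c-3k-\psi$, which places the square-root singularity at the fixed endpoint $z=0$ and concentrates the $c$-dependence in the upper limit and one linear factor; it then computes everything in \emph{closed form}, obtaining $\frac{d}{dc}M_{\rm 1D} = 2\sqrt{(c-k)/(c-3k)}$ and, since the integrand of $E_{\rm 1D}$ vanishes at the upper limit, $\frac{d}{dc}E_{\rm 1D} = 2\int_0^{c-3k}\frac{z+k}{\sqrt{z(z+2k)}}\,dz = 2\sqrt{(c-3k)(c-k)}$ via an explicit antiderivative. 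You instead rescale to the fixed interval $[0,1]$, differentiate under the integral sign (your domination argument for the $t^{-1/2}$ singularity is adequate), and reduce positivity of $\frac{d}{dc}E_{\rm 1D}$ to the sign of the quadratic $2u^2+6ku+3k^2$; I checked the identity $(u+2k)(3u+2k)-(u+k)^2 = 2u^2+6ku+3k^2$ and the resulting formula, which indeed agrees with the paper's closed-form value. Your approach is more elementary and more robust --- it needs no antiderivatives, only a sign check, and would survive perturbations of the invariant for which closed forms are unavailable. What the paper's approach buys is the explicit formulas themselves: they are not incidental, since $\frac{d}{dc}M_{\rm 1D}$, $\frac{d}{dc}E_{\rm 1D}$, and the closed form of $M_{\rm 1D}$ are substituted later into the proof of Lemma \ref{lemma-splitting} (to determine the sign of $\lambda_2$ through the combination $\|\psi\|_{L^2}^2 \frac{d}{dc} M_{\rm 1D} - 2 M_{\rm 1D}\frac{d}{dc}E_{\rm 1D}$) and into Remark \ref{rem-resonances} for the KdV limit, so your proof establishes the lemma as stated but would leave those downstream computations to be redone by other means.
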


\begin{proof}
Without  loss of generality, we place the maximum of $\psi$ at $x = 0$ such that $\psi(0) = c-3k$, see Remark \ref{rem-maximum}. By Lemma \ref{lem-solitary-wave}, we have $\psi(x) = \psi(-x) > 0$ for every $x \in \mathbb{R}$ and $\psi'(x) = - \psi'(-x) < 0$ for every $x > 0$. We obtain from (\ref{eq-hatM}) by explicit computations that 
\begin{align*}
M_{\rm 1D}(\psi) &= 2  \int_0^{\infty} \psi(x) dx \\
&= 2 \int_0^{c-3k} \frac{\sqrt{c-k-\psi}}{\sqrt{c-3k -\psi}} d \psi \\
&= 2 \int_0^{c-3k} \frac{\sqrt{2k+z}}{\sqrt{z}}dz \\
&= 8 k \int_0^{\xi_0} \sqrt{1+\xi^2}d\xi, 
\end{align*}
where we have made the substitutions $z = c - 3k - \psi$ and
$$
\xi = \frac{\sqrt{z}}{\sqrt{2k}}, \qquad \xi_0 = \frac{\sqrt{c-3k}}{\sqrt{2k}}.
$$ 
The integral is evaluated explicitly to find that
\begin{align*}
M_{\rm 1D}(\psi) = 4k \left[\xi_0\sqrt{1+\xi_0^2}+{\rm arcsinh} \xi_0 \right],
\end{align*}
from which it follows that
\begin{equation*}
\frac{d}{dc} M_{\rm 1D}(\psi) = 2\sqrt{ \frac{c-k}{c-3k}}>0.
\end{equation*}
Similarly, we find that
\begin{align*}
E_{\rm 1D}(\psi) &= 2 \int_0^{c-3k} \frac{\psi (c-2k-\psi)}{\sqrt{(c-k-\psi)(c-3k-\psi)}} d\psi \\
&= 2 \int_0^{c-3k} \frac{(c-3k-z)(z+k)}{\sqrt{z(z+2k)}}dz,
\end{align*}
from which we obtain that
\begin{align*}
\frac{d}{dc}E_{\rm 1D}(\psi) &= 2 \int_0^{c-3k}\frac{z+k}{\sqrt{z(z+2k)}}dz \\
&= 2 \sqrt{z(z+2k)} \biggr|_{z=0}^{z=c-3k} \\
&= 2\sqrt{(c-3k)(c-k)}>0.
\end{align*}
Thus, both mappings
$c \mapsto M_{\rm 1D}(\psi)$ and $c \mapsto E_{\rm 1D}(\psi)$ are monotonically
increasing for every $c \in (3k,\infty)$.
\end{proof}

\begin{remark}
The  monotonicity of $c \mapsto E_{\rm 1D}(\psi)$ plays a central role in the proof of the orbital stability of smooth solitary wave in the CH equation (\ref{eq-CH}), see \cite{CS2002}.
\end{remark}

\begin{remark}
	\label{remark-norm}
	For later reference, we also compute $\| \psi \|_{L^2}^2$ by using the same idea as in the proof of Lemma \ref{lem-monotonicity}:
	\begin{align*}
	\|\psi\|_{L^2}^2 &= 2 \int_0^{c-3k} \frac{\psi \sqrt{c - k - \psi}}{\sqrt{c-3k-\psi}} d \psi \\
	&= 2 \int_0^{c-3k} \frac{\sqrt{2k+z} (c-3k-z)}{\sqrt{z}} dz \\
	&= 8k \int_0^{\xi_0} (c-3k - 2k \xi^2) \sqrt{1 + \xi^2} d \xi, 
	\end{align*}
	from which we obtain
	\begin{align*}
\|\psi\|_{L^2}^2
	&= 4k (c-3k) \left[\xi_0\sqrt{1+\xi_0^2}+{\rm arcsinh} \xi_0\right] \\
	& \qquad
	- 2k^2 \left[ 2 \xi_0\sqrt{(1+\xi_0^2)^3}- \xi_0 \sqrt{1+\xi_0^2}-{\rm arcsinh} \xi_0 \right] \\
	&= 2k(2c-5k) \left[ \xi_0 \sqrt{1+\xi_0^2} + {\rm arcsinh} \xi_0 \right] 
	- 4k^2 \xi_0 \sqrt{(1+\xi_0^2)^3}.
	\end{align*}
\end{remark}

\subsection{The continuous spectrum of the spectral problem (\ref{eq-EVproblem})}

We start by analyzing properties of $L$. First, $L$ is a self-adjoint Sturm-Liouville  operator in $L^2(\R)$ with dense domain in $H^2(\R)$. The translational symmetry of the CH equation (\ref{eq-CH}) implies that
\begin{equation}
\label{derivative-x}
L \phi' = 0, \quad \phi' \in {\rm Dom}(L) \subset L^2(\R).
\end{equation}
Since $\phi'$ has only one zero on $\mathbb{R}$,  Sturm--Liouville theory implies that the spectrum 
of $L$ in $L^2(\mathbb{R})$ consists of one simple negative and a simple zero eigenvalue isolated from the strictly positive part of the spectrum. Furthermore, since $\phi \in C^{\infty}(\R)$ is  smooth in $c$, we find by differentiating the traveling wave equation \eqref{eq-tws} with respect to $c$ that
\begin{equation}
\label{derivative-c}
	L\partial_c\phi = k-\mu, \quad \partial_c\phi \in {\rm Dom}(L) \subset L^2(\R),
\end{equation}
where $\mu := \phi-\phi''$. Based on these computations, the following two lemmas specify properties of the linearized operator $JL$ in $L^2(\R)$, included here for the sake of completeness, and in the exponentially weighted space $L^2_{\nu}$ for small $\nu > 0$. 

\begin{lemma}
	\label{lemma-spectrum-L2}
	For every $c > 3k$, the spectrum of $JL$ in $L^2(\R)$ covers $i \R$ with $0$ being an embedded eigenvalue. 
\end{lemma}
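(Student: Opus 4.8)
The plan is to split the statement into two independent assertions: that $0$ is an eigenvalue of $JL$ in $L^2(\R)$ with eigenfunction $\phi'$, and that the continuous spectrum of $JL$ fills the whole imaginary axis $i\R$, so that this eigenvalue is in fact embedded.

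The eigenvalue is immediate from the translational identity \eqref{derivative-x}. First I would observe that $L\phi' = 0$ forces $JL\phi' = 0$. Since $\phi \in C^\infty(\R)$ converges to $k$ exponentially fast, we have $\phi' \in H^2(\R) = \mathrm{Dom}(L)$, and because $J$ is bounded on $L^2(\R)$ (its Fourier symbol $i\xi/(1+\xi^2)$ is bounded), it follows that $\phi' \in \mathrm{Dom}(JL)$ and $\lambda = 0$ is a genuine $L^2$-eigenvalue of $JL$.

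For the continuous spectrum I would replace $L$ by its constant-coefficient asymptotic operator $L_0$ of \eqref{lin-oper-L0}. The difference $L - L_0$ is a second-order differential operator whose coefficients are built from $\phi - k$ and $\phi''$, both decaying exponentially, so that $J(L-L_0)$ is a relatively compact perturbation of $JL_0$ and Weyl's theorem yields $\sigma_{\mathrm{ess}}(JL) = \sigma_{\mathrm{ess}}(JL_0)$. The operator $JL_0$ has constant coefficients, so the Fourier transform converts it into multiplication by the symbol
\[
\lambda(\xi) = \frac{i\xi\left[(c-3k)+(c-k)\xi^2\right]}{1+\xi^2}, \qquad \xi \in \R,
\]
and its spectrum is the closed range of $\lambda$. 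Since $c > 3k$, the imaginary part $\mathrm{Im}\,\lambda(\xi)$ is a continuous odd function vanishing at $\xi = 0$, strictly positive for $\xi > 0$, and tending to $\pm\infty$ as $\xi \to \pm\infty$; by the intermediate value theorem its range is all of $\R$. Hence $\sigma(JL_0) = i\R$, purely continuous because a non-constant rational multiplier attains each value only on a set of measure zero. Combining this with Weyl's theorem gives $\sigma_{\mathrm{ess}}(JL) = i\R$, so the eigenvalue $\lambda = 0$ is embedded.

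The step I expect to require the most care, and the main obstacle, is the application of Weyl's essential spectrum theorem to the \emph{non}-self-adjoint operator $JL$: one must verify that $J(L-L_0)$ is relatively compact with respect to $JL_0$. I would establish this by writing $J(L-L_0)(JL_0 - z)^{-1}$ for $z \notin i\R$ and checking that multiplication by the exponentially decaying coefficients of $L - L_0$, composed with the smoothing resolvent of the constant-coefficient operator $JL_0$, defines a compact operator on $L^2(\R)$; the exponential decay of the coefficients is precisely what delivers compactness.
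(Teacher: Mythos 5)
Your plan has the same two-part skeleton as the paper's proof: the zero eigenvalue comes from the translational identity \eqref{derivative-x}, and the imaginary axis comes from the constant-coefficient asymptotic operator $L_0$ of \eqref{lin-oper-L0} via the Fourier transform. (The paper additionally invokes the one-dimensional spectral stability of $\phi$ from \cite{CS2002,LP-22} to conclude $\sigma(JL)=i\R$ exactly; you do not need this for the weaker claim that the spectrum \emph{covers} $i\R$, so that omission is harmless.) However, the step you yourself flagged as the main obstacle contains a genuine error: $J(L-L_0)$ is \emph{not} relatively compact with respect to $JL_0$, so the relative-compactness form of Weyl's theorem cannot be applied as you propose. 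Indeed, write $L-L_0 = -3\psi + \psi'' + \partial_x \psi \partial_x$ with $\psi := \phi - k$. The zeroth-order terms cause no trouble, but for the second-order term note that
$$
J\partial_x = -I + (1-\partial_x^2)^{-1}, \qquad \partial_x (JL_0-z)^{-1} = \tfrac{1}{c-k}\, I + K_z,
$$
where $K_z$ has Fourier symbol decaying like $|\xi|^{-1}$. Every cross term is a decaying multiplier composed (on one side or the other) with an operator gaining at least one derivative, hence compact, so
$$
J \partial_x \psi \partial_x (JL_0-z)^{-1} = -\tfrac{1}{c-k}\, M_\psi + (\text{compact}),
$$
where $M_\psi$ denotes multiplication by $\psi$. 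Multiplication by a nonzero decaying function is never compact on $L^2(\R)$ (test it on wave packets $e^{inx}\chi(x)$ supported near the maximum of $\psi$: they converge weakly to zero while their images keep a fixed norm), so $J(L-L_0)(JL_0-z)^{-1}$ is not compact. The structural reason is that $J(L-L_0)$ has the same order (one) as $JL_0$: the single derivative gained by the resolvent is exactly consumed by the perturbation, leaving no smoothing to generate compactness.

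The conclusion of that step is nevertheless true, and two standard repairs are available. First, one can verify instead the resolvent-difference form of Weyl's theorem: in
$$
(JL-z)^{-1} - (JL_0-z)^{-1} = -(JL-z)^{-1} J(L-L_0) (JL_0-z)^{-1}
$$
the extra resolvent on the left supplies the missing derivative gain; the problematic term reduces, modulo compact operators, to $\tfrac{1}{c-k}(JL-z)^{-1}M_\psi$, which is compact because its adjoint is a decaying multiplication followed by a resolvent that smooths by one order. Second, and closer to what the paper's (equally terse) appeal to Weyl's theorem actually means in this literature \cite{PW92,PW94}: rewrite $JL\hat v = \lambda \hat v$ as the third-order ODE $[\partial_x L - \lambda(1-\partial_x^2)]\hat v = 0$, whose coefficients converge exponentially to constants, and use the Fredholm/exponential-dichotomy characterization of the essential spectrum, namely that $JL-\lambda$ fails to be Fredholm of index zero precisely when the asymptotic dispersion relation $i\xi[c-3k+(c-k)\xi^2] = \lambda(1+\xi^2)$ has a root $\xi \in \R$, i.e.\ precisely when $\lambda \in i\R$. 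Either route closes the gap; as written, your argument does not.
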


\begin{proof}
It follows from (\ref{derivative-x}) that $JL \phi' = 0$ with $\phi' \in {\rm Ker}(JL) \subset L^2(\R)$ so that $0 \in \sigma(JL)$. Because $\phi(x) \to k$ as $|x| \to \infty$ exponentially fast, Weyl's theorem implies that the continuous spectrum of $JL$ is given by the spectrum of $JL_0$ in $L^2(\R)$, where $L_0$ is given by (\ref{lin-oper-L0}). By using the Fourier transform in $L^2(\R)$, we obtain that
	$$
	\sigma(JL_0) = \left\{ i \xi (1+\xi^2)^{-1} [ c - 3k + (c-k) \xi^2], \quad \xi \in \mathbb{R} \right\} = i \R \quad \mbox{\rm in} \;\; L^2(\R).	
	$$
Since $\phi$ is spectrally stable in the time evolution of the CH equation (\ref{eq-CH}) \cite{CS2002,LP-22}, no other points of the spectrum of $JL$ in $L^2(\R)$ exists outside $i \R$. Thus, the spectrum of $JL$ in $L^2(\R)$ is $\sigma(JL)=i \R$ with $0$ being an embedded eigenvalue.
\end{proof}

\begin{lemma}
	\label{lemma-spectrum}
	For every $c > 3k$, there exists $\nu_0 > 0$ such that the continuous spectrum of $JL$ in $L^2_{\nu}$ with $\nu \in (0,\nu_0)$ is strictly negative and the (isolated) zero eigenvalue in $L^2_{\nu}$ is algebraically double.
\end{lemma}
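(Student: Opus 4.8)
The plan is to pass to the weighted space through the isometric conjugation $w\mapsto e^{\nu x}w$, under which $JL$ on $L^2_\nu$ becomes the operator $e^{\nu x}(JL)e^{-\nu x}$ on $L^2(\R)$, obtained by the substitution $\partial_x\mapsto \partial_x-\nu$. I would then treat the continuous spectrum and the point spectrum at the origin separately, the former giving strict negativity (and isolation of $0$) and the latter the algebraic double structure.

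For the continuous spectrum, since $\phi-k=\psi$ and its derivatives decay like $e^{-\nu_0|x|}$ by Lemma \ref{lem-solitary-wave}, the exponentially decaying multiplication coefficients of $L-L_0$ survive the conjugation for $\nu\in(0,\nu_0)$, so $e^{\nu x}(JL)e^{-\nu x}$ is a relatively compact perturbation of $e^{\nu x}(JL_0)e^{-\nu x}$ and Weyl's theorem gives that the continuous spectrum of $JL$ in $L^2_\nu$ equals $\sigma(JL_0)$. The latter is computed by the Fourier transform: writing $\zeta:=-\nu+i\xi$, it is the image of
\[
\Omega_\nu(\xi)=\frac{\zeta\,[\,c-3k-(c-k)\zeta^2\,]}{1-\zeta^2},\qquad \xi\in\R.
\]
I would show $\mathrm{Re}\,\Omega_\nu(\xi)<0$, bounded away from $0$, for every $\nu\in(0,\nu_0)$. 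The computation simplifies using the identity $c-3k-(c-k)\zeta^2=(c-k)(1-\zeta^2)-2k$; after clearing the modulus $|1-\zeta^2|^2=A^2+B^2$ with $A=1-\nu^2+\xi^2>0$, the numerator of $\mathrm{Re}\,\Omega_\nu$ reduces to $\nu$ times a downward-opening parabola in $\xi^2$ whose value at $\xi=0$ is $-(1-\nu^2)\big[(c-3k)-(c-k)\nu^2\big]$ and whose linear and quadratic coefficients are manifestly negative. Since $\nu_0<1$ and $(c-3k)-(c-k)\nu^2=(c-k)(\nu_0^2-\nu^2)>0$ for $\nu<\nu_0$, all three coefficients are negative, so $\mathrm{Re}\,\Omega_\nu(\xi)<0$ for all $\xi$; together with the finite negative limit $-\nu(c-k)$ as $|\xi|\to\infty$ this yields a uniform bound $\mathrm{Re}\,\Omega_\nu\le-b_0<0$, whence $0$ is isolated.

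It remains to analyze the zero eigenvalue. The geometric kernel is one-dimensional: $JLu=0$ forces $Lu$ to be constant, and neither the second (exponentially growing) solution of $Lu=0$ nor any solution of $Lu=\mathrm{const}\neq0$ (which tends to a nonzero constant at $+\infty$) lies in $L^2_\nu$, so $\ker(JL)=\mathrm{span}\{\phi'\}$ by \eqref{derivative-x}, using that $\phi'\in L^2_\nu$ for $\nu<\nu_0$. To obtain algebraic multiplicity at least two I would exhibit the Jordan chain $w_1=\phi'$, $w_2=-\partial_c\phi$: by \eqref{derivative-c} one has $Lw_2=-L\partial_c\phi=(1-\partial_x^2)\psi$, hence $JLw_2=\partial_x\psi=\phi'=w_1$, and $\partial_c\phi\in L^2_\nu$ because it decays like $|x|e^{-\nu_0|x|}$.

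The crux, and the step I expect to be the main obstacle, is to show the chain terminates, i.e. $JLw_3=w_2$ has no solution in $L^2_\nu$. Since $\ker J$ contains no nonzero element of $L^2_\nu$, this is equivalent to solvability of $Lw_3=g_3$ with $g_3:=J^{-1}w_2=-(1-\partial_x^2)\partial_x^{-1}\partial_c\psi\in L^2_\nu$. Conjugating by $e^{\nu x}$ shows $e^{\nu x}Le^{-\nu x}$ is Fredholm of index $0$ with cokernel $\mathrm{span}\{e^{-\nu x}\phi'\}$ (the partner solution $\sim e^{\nu_0|x|}$ being excluded by the weight), so the solvability condition collapses to the unweighted pairing $\langle g_3,\phi'\rangle_{L^2}=0$. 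Evaluating this pairing by repeatedly integrating by parts, transferring $(1-\partial_x^2)$ and $\partial_x^{-1}$ onto $\phi'=\psi'$ (all boundary terms vanishing since $\psi,\psi'$ decay exponentially), I would obtain
\[
\langle g_3,\phi'\rangle_{L^2}=\tfrac12\frac{d}{dc}\int_\R\big[\psi^2+(\psi')^2\big]\,dx=\frac{d}{dc}E_{\mathrm{1D}}(\psi),
\]
which by Lemma \ref{lem-monotonicity} equals $2\sqrt{(c-3k)(c-k)}\neq0$. Thus the solvability condition fails, $w_3$ does not exist, and the single Jordan chain has length exactly two, so the isolated zero eigenvalue is algebraically double.
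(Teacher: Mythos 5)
Your proposal is correct and follows essentially the same route as the paper: Weyl's theorem plus the Fourier symbol $\Omega_\nu(\xi)$ for the continuous spectrum (your factorization $(c-k)\nu^4-2(c-2k)\nu^2+(c-3k)=(1-\nu^2)\left[(c-3k)-(c-k)\nu^2\right]$ just makes explicit the positivity condition the paper states), the same kernel analysis via the exponentially growing second solution of $Lf=0$ and the non-decaying solution of $Lf=\mathrm{const}$, the same Jordan chain $\phi'\mapsto\pm\partial_c\phi$ from \eqref{derivative-c}, and the same termination criterion reducing to the Fredholm pairing $\pm\frac{d}{dc}E_{\rm 1D}(\psi)\neq 0$ via Lemma \ref{lem-monotonicity}. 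Your conjugation/cokernel formulation of the solvability condition is simply a more explicit rendering of the paper's remark that the adjoint eigenfunctions live in $L^2_{-\nu}$, so the two arguments coincide in substance.
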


\begin{proof}
By Weyl's theorem, the continuous spectrum of $JL$ in $L^2_{\nu}$ is  given by the spectrum of $JL_0$ in $L^2_{\nu}$. Using the Fourier transform we obtain that
$$
	\sigma(JL_0) = \left\{ (i \xi - \nu) [1-(i\xi - \nu)^2]^{-1} [ c - 3k - (c-k) (i \xi - \nu)^2], \quad \xi \in \mathbb{R} \right\} \quad \mbox{\rm in} \;\; L^2_{\nu}.	
$$	
We claim that if  $0<\nu<\nu_0$ with $\nu_0=\sqrt{\frac{c-3k}{c-k}}$,  then 
$$
\text{Re}(\sigma(JL_0)) < 0 \quad \mbox{\rm in} \;\; L^2_{\nu},
$$ 
where $\text{Re}(\sigma(JL_0))$ coincides with 
the range of the function $\lambda_r(\xi) : \mathbb{R} \to \mathbb{R}$ given by 
\begin{align*}
\lambda_r(\xi) &= {\rm Re}\left[ (i\xi-\nu)[1-(i\xi-\nu)^2]^{-1}[c-3k-(c-k)(i\xi-\nu)^2] \right]\\
&= {\rm Re}\left[ (c-k) (i\xi-\nu) - 2k (i\xi - \nu) [1-(i\xi-\nu)^2]^{-1}  \right] \\
&= -\nu(c-k) - \frac{2k \nu (\nu^2 + \xi^2 - 1)}{(1-\nu^2+\xi^2)^2 + 4 \xi^2 \nu^2}
\end{align*}
Expanding this quantity yields 
\begin{align*}
\lambda_r(\xi) &= - \frac{\nu}{(1-\nu^2+\xi^2)^2 + 4 \xi^2 \nu^2} 
\left[ c - 3k + 2 c \xi^2 - 2(c-2k) \nu^2 + (c-k) (\xi^2 + \nu^2)^2 \right],
\end{align*}
which is strictly negative if $\nu > 0$ and 
$$
(c-k)\nu^4-2(c-2k)\nu^2+c-3k>0.
$$
The latter constraint is true if $\nu < \nu_0 =\sqrt{\frac{c-3k}{c-k}}$. Note that $\nu_0 \in (0,1)$.

It remains to prove that $0 \in \sigma(JL)$ is a double eigenvalue in $L^2_{\nu}$. Since $\phi'(x) \to 0$ as $|x| \to \infty$ exponentially fast, we have $\phi' \in L^2_{\nu}$ for sufficiently small $\nu > 0$. The Wronskian between two solutions $\{ f_1, f_2 \}$ of $Lf = 0$ is asymptotically constant at infinity and nonzero since 
$$
W(f_1,f_2) = \left| \begin{array}{cc} f_1 & f_2 \\ f_1' & f_2' \end{array} \right| = \frac{W_0}{c - \phi}, \quad x \in \mathbb{R},
$$
where $W_0$ is a nonzero constant. If one solution $f_1 := \phi'$ 
decays exponentially at infinity, the other (linearly independent) solution $f_2$ grows exponentially at infinity. Hence 
$$
\ker L = {\rm span}(\phi') \quad \mbox{\rm in} \;\; L^2_{\nu}.
$$ 
Furthermore, since $\phi$ is even, $L$ is parity preserving. There 
exists an even solution $f_0$ to the inhomogeneous equation $Lf_0  = 1$ 
and since $L$ converges to $L_0$ at infinity, $f_0$ is non-decaying at infinity. 
Since $JL f = 0$ implies $Lf = C$ for some constant $C\in\R$ and $f = C f_0 \notin L^2_{\nu}$ is non-decaying if $C \neq 0$, it follows that 
$$
\ker(JL) = \ker(L) = {\rm span}(\phi') \quad \mbox{\rm in} \;\; L^2_{\nu}.
$$
In order to study the algebraic multiplicity of the zero eigenvalue, we consider solutions of $JL f = \phi'$. Since it follows from (\ref{derivative-c}) that $JL\partial_c\phi=-\phi'$ and $\partial_c \phi \in L^2_{\nu}$, we have 
$$
\ker((JL)^2) = {\rm span}(\phi',\partial_c \phi)\quad \mbox{\rm in} \;\; L^2_{\nu}.
$$ 

The zero eigenvalue of $JL$ is algebraically double if and only if there exists no $f \in L^2_{\nu}$ such that $JL f = \partial_c \phi$, or equivalently, 
\begin{equation}
\label{lin-inhom-eq}
L f = \partial_x^{-1} \partial_c \mu, 
\end{equation}
where $\partial_x^{-1} \partial_c\mu \in L^2_{\nu}$. 
If the eigenfunctions of $L$ are defined in $L^2_{\nu}$, then the adjoint eigenfunctions are defined in $L^2_{-\nu}$ due to the transformation
$L \mapsto L_{\nu} := e^{\nu x} L e^{-\nu x}$ for eigenfunctions in the weighted space $L^2_{\nu}$, see \cite{Comech,PW94}. As a result, the inner product in $L^2_{\mu}$ is equivalent to the inner product in $L^2$, i.e.
\begin{equation}
\label{inner-product}
\forall f \in L^2_{\nu}, \;\; \forall g \in L^2_{-\nu} : \quad
\langle f,g\rangle_{L^2_{\nu}} := \langle e^{\nu x} f, e^{-\nu x} g \rangle_{L^2} = \langle f,g \rangle_{L^2}.
\end{equation}
In what follows, we drop the subscript $L^2$ for the inner product in $L^2$.
To provide the existence of solutions $f \in L^2_{\nu}$ of
the linear inhomogeneous equation (\ref{lin-inhom-eq}), 
we check the Fredholm condition
given by
\begin{equation}
\label{energy-1D}
\langle \phi', \partial_x^{-1} \partial_c \mu \rangle = - \langle (\phi - k), \partial_c \mu \rangle = - \frac{d}{dc} E_{\rm 1D}(\psi),
\end{equation}
where $E_{\rm 1D}(\psi)$ is given by (\ref{eq-hatEM}) and integration by parts gives no contribution at infinity since $\phi(x) \to k$ as $|x| \to \infty$ exponentially fast. By Lemma \ref{lem-monotonicity}, the right-hand side is strictly negative so
that no $f \in L^2_{\nu}$ exists such that $JL f = \partial_c \phi$.
Hence, $0 \in \sigma(JL)$ is a double eigenvalue in $L^2_{\nu}$.
\end{proof}

\begin{figure}[htpb!]
	\includegraphics[width=0.4\textwidth]{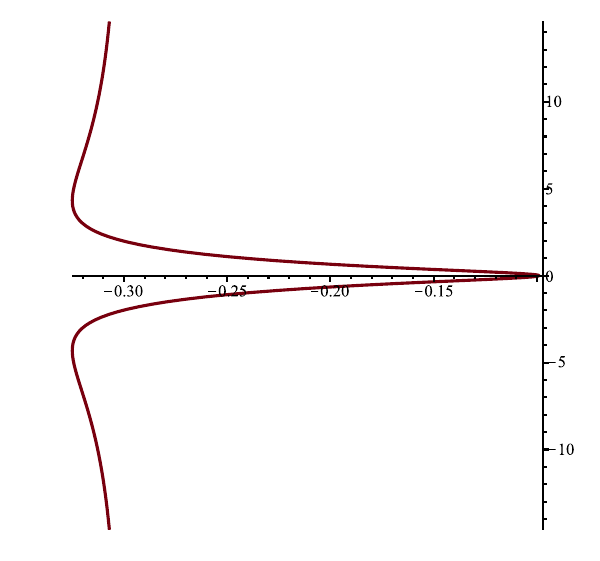}	
	\caption{A plot of $\lambda(\xi)$, $\xi \in \R$ in the complex plane for $k=1$, $c=4$, $\eta=0.01$, and $\nu=0.1$}
	\label{Fig-lambda}
\end{figure}

Based on Lemma \ref{lemma-spectrum}, we can study properties of 
the spectral stability problem (\ref{eq-EVproblem}) with transverse wave number $\eta \in \mathbb{R}$, $\eta \neq 0$. The continuous spectrum of $J(L+\eta^2 \partial_x^{-2})$ in $L^2_{\nu}$ coincides with the purely continuous spectrum of $J(L_0 + \eta^2 \partial_x^{-2})$ in $L^2_{\nu}$, which can be obtained by using the Fourier transform in $x$. The spectrum $\sigma(L_0+\eta^2 \partial_x^{-2})$ in $L^2_{\nu}$ 
is defined by the range of the function $\lambda(\xi) : \mathbb{R} \to \mathbb{C}$ given by
\begin{equation}
\label{2.10}
	\lambda(\xi) = (i\xi -\nu) [1-(i\xi - \nu)^2]^{-1} \left[ c-3k -(c-k)(i\xi - \nu)^2 + \eta^2 (i\xi - \nu)^{-2} \right].
\end{equation}
Figure \ref{Fig-lambda} gives a plot of $\lambda(\xi)$ for specific values of $k$, $c$, $\eta$, and $\nu$. The plot suggests that $\sigma(L_0 + \eta^2 \partial_x^{-2})$ in $L^2_{\nu}$ is located in the left half-plane bounded away from zero. The following lemma proves this property.

\begin{lemma}
	\label{lemma-continuous}
For every $c>3k$, $\eta \in \mathbb{R}$ and $\nu \in (0,\nu_0)$, 
where $\nu_0 := \sqrt{\frac{c-3k}{c-k}}$, we have 
$\text{\rm Re}(\lambda(\xi)) < 0$ for all $\xi\in \R$.
\end{lemma}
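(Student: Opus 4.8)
The plan is to exploit the additivity of the real part and reduce to the computation already carried out for the case $\eta = 0$ in the proof of Lemma \ref{lemma-spectrum}. Writing $w := i\xi - \nu$, the function (\ref{2.10}) splits as
\begin{equation*}
\lambda(\xi) = w(1-w^2)^{-1}\left[ c - 3k - (c-k) w^2 \right] + \eta^2 \, \frac{1}{w(1-w^2)} =: \lambda_0(\xi) + \eta^2 \, \frac{1}{w(1-w^2)},
\end{equation*}
where $\lambda_0(\xi)$ is precisely the expression whose real part was analyzed for $\eta = 0$. Since $\Re$ is additive and $\eta^2 \geq 0$, it suffices to establish two claims: (i) $\Re(\lambda_0(\xi)) < 0$ for every $\xi \in \R$, and (ii) $\Re\!\left[ (w(1-w^2))^{-1} \right] \leq 0$ for every $\xi \in \R$.

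Claim (i) is exactly the estimate $\lambda_r(\xi) < 0$ proved in Lemma \ref{lemma-spectrum}, valid for $\nu \in (0,\nu_0)$ with $\nu_0 = \sqrt{(c-3k)/(c-k)}$, so I would simply invoke it. For claim (ii), I would compute the denominator directly. Expanding $w - w^3$ for $w = -\nu + i\xi$ gives
\begin{equation*}
w - w^3 = -\nu(1 - \nu^2 + 3\xi^2) + i\xi(1 - 3\nu^2 + \xi^2),
\end{equation*}
so that, using $\Re(1/z) = \Re(z)/|z|^2$ and $w(1-w^2) = w - w^3$,
\begin{equation*}
\Re\!\left[ \frac{1}{w(1-w^2)} \right] = \frac{-\nu(1 - \nu^2 + 3\xi^2)}{|w - w^3|^2}.
\end{equation*}

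The sign of this quantity is the crux of the argument. Since $\nu > 0$ and $\nu_0 \in (0,1)$ (as already noted in Lemma \ref{lemma-spectrum}), we have $\nu < 1$, whence $1 - \nu^2 + 3\xi^2 > 0$ for all $\xi \in \R$ and the numerator is strictly negative; the denominator is strictly positive because $\Re(w - w^3) \neq 0$ forces $w - w^3 \neq 0$ (this incidentally confirms that (\ref{2.10}) is well defined, as $w = 0$ and $w = \pm 1$ are excluded for $0 < \nu < 1$). Thus claim (ii) holds, and combining with (i) yields
\begin{equation*}
\Re(\lambda(\xi)) = \lambda_r(\xi) + \eta^2 \, \frac{-\nu(1 - \nu^2 + 3\xi^2)}{|w - w^3|^2} < 0 \qquad \text{for all } \xi \in \R.
\end{equation*}
The only genuine computation is the elementary expansion of $w - w^3$; the essential structural point, and what I expect to be the one place requiring care, is verifying that the transverse term $\eta^2 \partial_x^{-2}$ contributes a manifestly nonpositive amount to the real part in the weighted space, so that it can only push the continuous spectrum further into the left half-plane rather than out of it.
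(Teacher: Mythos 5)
Your proposal is correct and follows essentially the same route as the paper: both split $\lambda(\xi)$ into the $\eta=0$ part (whose negativity is inherited from the proof of Lemma \ref{lemma-spectrum}) plus the $\eta^2$ correction, whose real part is $-\nu(1-\nu^2+3\xi^2)/|w-w^3|^2 \le 0$ since $\nu < \nu_0 < 1$. Your denominator $|w-w^3|^2$ is exactly the paper's factored form $(\xi^2+\nu^2)\left[(1-\nu^2+\xi^2)^2+4\xi^2\nu^2\right]$, so the two computations coincide.
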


\begin{proof}
The expression (\ref{2.10}) can be simplified in the form: 
\begin{align*}
\lambda(\xi) =& (c-k)(i\xi-\nu)  - 2k(i\xi-\nu)  [1 - (i\xi-\nu)^2]^{-1} 
+ \eta^2 (i\xi-\nu)^{-1}  [1 - (i\xi-\nu)^2]^{-1}.
\end{align*}
Computing the real part and using $\lambda_r(\xi)$ from the proof of 
Lemma \ref{lemma-spectrum}, we obtain 
\begin{align*}
\text{\rm Re}(\lambda(\xi)) &= \lambda_r(\xi) - \frac{\eta^2 \nu (1- \nu^2 + 3 \xi^2)}{(\xi^2 + \nu^2) [(1-\nu^2+\xi^2)^2 + 4 \xi^2 \nu^2]}.
\end{align*}
Since $\lambda_r(\xi) < 0$ for $\nu \in (0,\nu_0)$ with $\nu_0 :=\sqrt{\frac{c-3k}{c-k}}$ and $\nu_0 \in (0,1)$, we have 
$\text{\rm Re}(\lambda(\xi)) < 0$ for all $\xi \in \R$.
\end{proof}

\subsection{Splitting of the double zero eigenvalue in $L^2_{\nu}$ for $\eta \neq 0$}

By Lemma \ref{lemma-spectrum}, $0$ is a double (isolated) eigenvalue 
of $JL$ in $L^2_{\nu}$ for small $\nu > 0$. When $\eta \neq 0$ in  \eqref{eq-EVproblem}, the translational symmetry is broken and the double zero eigenvalue may split into two complex eigenvalues of $J(L + \eta^2 \partial_x^{-2})$. Since it is isolated away from the continuous spectrum of $J(L + \eta^2 \partial_x^{-2})$ for every $\eta \in \R$ and small $\nu > 0$ by Lemma \ref{lemma-continuous}, the splitting can be studied by using   perturbative methods in powers of $\eta$.  

The following lemma states that when $\eta \neq 0$ the double zero eigenvalue of $JL$ in $L^2_{\nu}$ for small $\nu > 0$ splits  into a pair of eigenvalues of $J(L + \eta^2 \partial_x^{-2})$ located in the left half of the complex plane. The result holds for   solitary waves of arbitrary amplitude and is derived by means of  Puiseux expansions in $\eta$. Together with Lemma \ref{lemma-continuous}, this proves the result of Theorem \ref{theorem-splitting}.

\begin{lemma}
	\label{lemma-splitting}
	There exists $\nu_0 > 0$ such that for every fixed $\nu \in (0,\nu_0)$ there exists $\eta_0 > 0$ such that the spectrum of
	$J(L + \eta^2 \partial_x^{-2})$ in $L^2_{\nu}$ for $\eta \in (-\eta_0,\eta_0)$ 	contains a pair of simple eigenvalues $\lambda_{\pm}(\eta)$ such that 
	for $\eta \neq 0$ we have 
	\begin{itemize}
		\item ${\rm Re}(\lambda_+(\eta)) = {\rm Re}(\lambda_-(\eta)) < 0$,
		\item ${\rm Im}(\lambda_+(\eta)) = -{\rm Im}(\lambda_-(\eta)) > 0$,
	\end{itemize}
and $\lambda_+(0) = \lambda_-(0) = 0$.
\end{lemma}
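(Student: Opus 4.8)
\emph{The plan is to treat the transverse term as a small perturbation of the operator $A_0 := JL$ and to track the splitting of its algebraically double zero eigenvalue by a Puiseux expansion in $\eta$.} By Lemmas~\ref{lemma-spectrum} and~\ref{lemma-continuous}, for $\nu \in (0,\nu_0)$ the point $0$ is an isolated eigenvalue of $A_0$ in $L^2_\nu$ of algebraic multiplicity two, separated from the continuous spectrum, which lies strictly in the open left half-plane. Writing \eqref{eq-EVproblem} as $(A_0 + \eta^2 B)\hat v = \lambda \hat v$ with $B := J\partial_x^{-2}$ a well-defined relatively bounded perturbation in $L^2_\nu$, I would invoke analytic perturbation theory for isolated eigenvalues of finite multiplicity to conclude that for $|\eta|$ small exactly two eigenvalues (with multiplicity) bifurcate from $0$; these are the $\lambda_\pm(\eta)$ to be analyzed. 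The Jordan structure is recorded from \eqref{derivative-x}--\eqref{derivative-c}: with $v_0 := \phi'$ and $v_1 := -\partial_c\phi$ one has $A_0 v_0 = 0$ and $A_0 v_1 = v_0$. Since $J^{-1} = (1-\partial_x^2)\partial_x^{-1}$, the kernel of the formal adjoint $A_0^\ast = -LJ$ is spanned by $w_0 := J^{-1}\phi' = \mu - k = (1-\partial_x^2)\psi$, which decays at infinity. The two pairings I need are $\langle v_0,w_0\rangle = 0$ by parity (as $\phi$ is even), and, using \eqref{energy-1D} together with $\mu - k = (1-\partial_x^2)\psi$, $\langle v_1,w_0\rangle = -\tfrac{d}{dc}E_{\rm 1D}(\psi) < 0$ by Lemma~\ref{lem-monotonicity}; the latter is nonzero, consistent with the block being exactly of size two.

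Since the perturbation is of order $\eta^2$ and the block has size two, I would expand $\lambda = \eta\lambda_1 + \eta^2\lambda_2 + \dots$ and $\hat v = v_0 + \eta v^{(1)} + \eta^2 v^{(2)} + \dots$. At order $\eta$ one solves $A_0 v^{(1)} = \lambda_1 v_0$, giving $v^{(1)} = \lambda_1 v_1$. The order-$\eta^2$ equation $A_0 v^{(2)} = \lambda_1^2 v_1 + \lambda_2 v_0 - Bv_0$ is solvable only if its right-hand side is orthogonal to $w_0$; since $\langle v_0,w_0\rangle = 0$, this Fredholm condition yields $\lambda_1^2 = \langle Bv_0,w_0\rangle / \langle v_1,w_0\rangle$. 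Here the computation is clean: $Bv_0 = J\partial_x^{-2}\phi' = (1-\partial_x^2)^{-1}\psi$, so that $\langle Bv_0,w_0\rangle = \langle (1-\partial_x^2)^{-1}\psi,(1-\partial_x^2)\psi\rangle = \|\psi\|_{L^2}^2 > 0$. Combined with $\langle v_1,w_0\rangle < 0$ this gives $\lambda_1^2 = -\|\psi\|_{L^2}^2 / \tfrac{d}{dc}E_{\rm 1D}(\psi) < 0$, hence $\lambda_1 = \pm i\Omega$ is purely imaginary with $\Omega > 0$. This already produces the leading splitting $\lambda_\pm(\eta) = \pm i\Omega\eta + \mathcal{O}(\eta^2)$, the simplicity of the two eigenvalues and the opposite signs of their imaginary parts for $\eta \neq 0$, as well as $\lambda_\pm(0)=0$.

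It remains to locate the real part, which first appears at order $\eta^2$ through $\lambda_2$; note $\lambda_2$ is common to both branches because the solvability conditions depend on $\lambda_1$ only through $\lambda_1^2$. Solving $A_0 u = \lambda_1^2 v_1 - Bv_0$ (the combination orthogonal to $w_0$ by the previous step) for the odd solution $u$ and imposing the order-$\eta^3$ Fredholm condition gives $\lambda_2 = \big(\langle Bv_1,w_0\rangle - \langle u,w_0\rangle\big)/\big(2\langle v_1,w_0\rangle\big)$. The key simplification is a parity argument: the right-hand side $\lambda_1^2 v_1 - Bv_0$ is even, while $A_0 = JL$ reverses parity, so $u$ is odd and $\langle u,w_0\rangle = 0$. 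Thus $\lambda_2 = \langle Bv_1,w_0\rangle/(2\langle v_1,w_0\rangle)$ is real, $\mathrm{Re}(\lambda_\pm) = \lambda_2\eta^2 + \mathcal{O}(\eta^4)$, and since the denominator is negative, $\mathrm{Re}(\lambda_\pm)<0$ for small $\eta\neq0$ is equivalent to $\langle Bv_1,w_0\rangle>0$. Reducing the nonlocal pairing as before, $\langle Bv_1,w_0\rangle = -\int_{\R}(\partial_c\Psi)\,\psi\,dx =: -I$, where $\Psi := \partial_x^{-1}\psi$.

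\emph{The main obstacle is to show $I < 0$ for every $c > 3k$.} Here I would evaluate $I$ by the same change of variables used in Lemma~\ref{lem-monotonicity} and Remark~\ref{remark-norm}: write $I = 2\int_0^\infty (\partial_c\psi)\,R\,dx$ with $R(x) = \int_x^\infty\psi$, use the quadrature $R = \int_0^{\psi}\sqrt{(c-k-s)/(c-3k-s)}\,ds$ along the orbit, and express $\partial_c\psi|_x = -\,\partial_c x|_\psi / \partial_\psi x$ from the implicit relation between $x$ and $\psi$, paying attention to the integrable singularity at the turning point $\psi = c-3k$. I expect this to collapse to an elementary integral with a definite negative sign; as a consistency check, in the small-amplitude (KdV) limit of Remark~\ref{remark-reductionKdV} one finds $I \to -8k < 0$. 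Establishing the sign of $I$ at arbitrary amplitude is the delicate step; granting it, $\lambda_2 < 0$, whence $\mathrm{Re}(\lambda_\pm(\eta)) < 0$ for $0 < |\eta| < \eta_0$, and the proof is complete.
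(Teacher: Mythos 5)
Your framework is the same as the paper's (Puiseux expansion about the isolated double zero eigenvalue, Fredholm conditions taken against $w_0 = J^{-1}\phi' = \mu - k \in L^2_{-\nu}$), and everything through order $\eta^2$ is correct: your identity $\lambda_1^2 = -\|\psi\|_{L^2}^2 \big/ \tfrac{d}{dc}E_{\rm 1D}(\psi) < 0$ is exactly (\ref{lambda-1}). The gap is your parity argument at order $\eta^3$. You claim the solution $u$ of $JLu = \lambda_1^2 v_1 - Bv_0$ can be taken odd, hence $\langle u, w_0\rangle = 0$. This is false. Inverting $JL$ brings in $\partial_x^{-1} = \int_{+\infty}^x$, and for an even function $g$ with $\int_{\R} g \neq 0$ one has $\partial_x^{-1} g = (\mbox{odd}) - \tfrac{1}{2}\int_{\R} g$: the nonzero constant forces the even part of $u$ to solve $L u_{\rm even} = {\rm const}$, i.e.\ $u_{\rm even}$ is a nonzero multiple of the bounded, non-decaying even solution $f_0$ of $Lf_0 = 1$ (this is precisely why $L^2_\nu$, which is not parity-invariant, is used in the first place). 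Here $\int_{\R}\big(\lambda_1^2 v_1 - Bv_0\big)dx = -\lambda_1^2 \tfrac{d}{dc}M_{\rm 1D} - M_{\rm 1D} \neq 0$, so $u$ is not odd, and the paper computes the pairing you dropped explicitly: using (\ref{derivative-c}), $\langle u, w_0 \rangle = \lambda_1^2 \langle \partial_c\phi, \partial_x^{-1}\partial_c\mu\rangle + \langle \partial_c \phi, \partial_x^{-1}(\phi-k)\rangle = \frac{\|\psi\|_{L^2}^2}{2 \frac{d}{dc}E_{\rm 1D}} \left(\tfrac{d}{dc}M_{\rm 1D}\right)^2 - \tfrac{1}{2} M_{\rm 1D}\tfrac{d}{dc}M_{\rm 1D}$, which does not vanish (in the KdV limit it tends to $-8k/3$).

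The consequence is that your formula $\lambda_2 = -\tfrac{1}{4} M_{\rm 1D}\tfrac{d}{dc}M_{\rm 1D}\big/\tfrac{d}{dc}E_{\rm 1D}$ is wrong (in the KdV limit it gives $-\sqrt{2k}/\varepsilon$ instead of the correct $-4\sqrt{2k}/(3\varepsilon)$ of Remark \ref{rem-resonances}); restoring the dropped pairing yields the paper's formula (\ref{lambda-2}), whose numerator is $\|\psi\|_{L^2}^2 \tfrac{d}{dc}M_{\rm 1D} - 2 M_{\rm 1D}\tfrac{d}{dc}E_{\rm 1D}$. The dropped term contributes the \emph{positive} (destabilizing) piece $\|\psi\|_{L^2}^2 \tfrac{d}{dc}M_{\rm 1D}$, so the sign of $\lambda_2$ is no longer automatic: one must prove $\|\psi\|_{L^2}^2 \tfrac{d}{dc}M_{\rm 1D} < 2 M_{\rm 1D}\tfrac{d}{dc}E_{\rm 1D}$ for every $c > 3k$, and this is the actual crux of the paper's proof, carried out via the explicit quadratures of Lemma \ref{lem-monotonicity} and Remark \ref{remark-norm} together with the inequality $\log\big(\xi_0 + \sqrt{1+\xi_0^2}\big) < \xi_0\sqrt{1+\xi_0^2}$. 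Note also that you have the easy and hard steps interchanged: what you call the main obstacle, the sign of $I$, is immediate from the very same odd-plus-constant decomposition that defeats your parity claim, namely $I = \langle \psi, \partial_x^{-1}\partial_c\phi\rangle = -\tfrac{1}{2}M_{\rm 1D}\tfrac{d}{dc}M_{\rm 1D} < 0$ by Lemma \ref{lem-monotonicity}, with no quadrature needed; the genuine difficulty of Lemma \ref{lemma-splitting} lives entirely in the term your shortcut removed.
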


\begin{proof}
	By Lemma \ref{lemma-continuous}, there exists $\nu_0 > 0$ such that
	for every fixed $\nu \in (0,\nu_0)$, the double zero eigenvalue of $JL$ in $L^2_{\nu}$ is isolated from its continuous spectrum of $J(L + \eta^2 \partial_x^{-2})$ in $L^2_{\nu}$. 
	Since $\eta^2 \partial_x^{-2}$ is a bounded analytic perturbation to
	the unbounded operator $L$ in $L^2_{\nu}$ for $\nu > 0$, the eigenvalues of 
	$J (L + \eta^2 \partial_x^{-2})$ in $L^2_{\nu}$ are continuous functions of $\eta$. 
	
	By Lemma \ref{lemma-spectrum}, the zero eigenvalue of $JL$ in $L^2_{\nu}$ is geometrically simple and algebraically double. Hence we use Puiseux expansions \cite{W} in order to trace the eigenvalues $\lambda_{\pm}(\eta)$ satisfying $\lambda_{\pm}(\eta) \to 0$ as $\eta \to 0$ with respect to small but nonzero $\eta$. Solutions of the spectral stability problem (\ref{eq-EVproblem}) with $\lambda = \lambda(\eta)$ are expanded as  
\begin{align*}
	&\hat v = v_0+  v_1\eta + v_2\eta^2  +  v_3\eta^3 + \mathcal{O}(\eta^4), \\
	&\lambda(\eta) = \lambda_1 \eta + \lambda_2 \eta^2 + \lambda_3 \eta^3 + \mathcal{O}(\eta^4). 
\end{align*}
where $v_0,v_1,v_2,v_3 \in L^2_{\nu}$ and $\lambda_1,\lambda_2,\lambda_3 \in \C$ are to be determined. We obtain at different orders in powers of $\eta$ that 
\begin{align*}
\mathcal{O}(1): &\quad  JLv_0 = 0, \\
\mathcal{O}(\eta): &\quad JLv_1  = \lambda_1 v_0, \\	
\mathcal{O}(\eta^2): &\quad JLv_2 = \lambda_2 v_0 + \lambda_1v_1 -  (1-\partial_x^2)^{-1}\partial_x^{-1}v_0  \\
\mathcal{O}(\eta^3): &\quad JLv_3 = \lambda_3 v_0 + \lambda_2 v_1 + \lambda_1 v_2 - (1-\partial_x^2)^{-1}\partial_x^{-1}v_1.
\end{align*}
With arbitrary normalization, we can set $v_0 = \phi'$ and
$v_1 = -\lambda_1 \partial_c \phi$ due to computations in the proof
of Lemma \ref{lemma-spectrum}. Then, at the order of $\mathcal{O}(\eta^2)$,
we write $v_2 = -\lambda_2 \partial_c \phi + \hat v_2$,
where $\hat v_2$ satisfies
$$
JL\hat v_2 = -\lambda_1^2 \partial_c \phi - (1-\partial_x^2)^{-1}(\phi-k).
$$
After inverting $J$ in $L^2_{\nu}$ with $\nu> 0$ we rewrite this linear inhomogeneous equation in the equivalent form
$$
L\hat v_2 = -\lambda_1^2 \partial_x^{-1}\partial_c \mu - \partial_x^{-1}(\phi-k).
$$
By using (\ref{inner-product}) we check the Fredholm condition
for the existence of solutions $\hat{v}_2 \in L^2_{\nu}$:
\begin{equation*}
	\lambda_1^2 \langle \phi',\partial_x^{-1}\partial_c\mu\rangle
	+ \langle\phi',\partial_x^{-1}(\phi-k)\rangle = 0.
\end{equation*}
Note that $\partial_x^{-1}(\phi-k) = \int_{+\infty}^x(\phi-k)$, so the second term gives after integration by parts
\begin{align*}
	\langle \phi',\partial_x^{-1}(\phi-k)\rangle
	&= (\phi-k)\int_{+\infty}^{x} (\phi-k)dx' \biggr|_{x\to-\infty}^{x \to +\infty} - \int_{-\infty}^{\infty} (\phi-k)^2dx \\
	&= -\|\phi - k\|_{L^2}^2 = - \| \psi \|^2_{L^2}.
\end{align*}
On the other hand, the first term is evaluated with the help of
(\ref{energy-1D}). Since $\frac{d}{dc} E_{\rm 1D}(\psi)>0$ by Lemma \ref{lem-monotonicity}, we obtain that
\begin{equation}
\label{lambda-1}
	\lambda_1^2 =-\frac{\langle\phi',\partial_x^{-1}(\phi-k)\rangle}{\langle \phi',\partial_x^{-1}\partial_c\mu\rangle } \\
	= - \frac{\|\psi\|_{L^2}^2}{\frac{d}{dc} E_{\rm 1D}(\psi)} < 0.
\end{equation}
Thus, we have two roots for $\lambda_1 \in i \R$, which determine
two simple eigenvalues $\lambda= \lambda_{\pm}(\eta)$. At the leading
order, we have ${\rm Im}(\lambda_+(\eta)) = - {\rm Im}(\lambda_-(\eta)) > 0$
and the complex-conjugate symmetry of eigenvalues is preserved since $J$ and $L$ are real-valued.

At the next order $\mathcal{O}(\eta^3)$ we write $v_3 = -\lambda_3 \partial_c \phi + \hat{\phi}_3$, where $\hat{v}_3$ satisfies
 \begin{equation*}
JL \hat{v}_3 = \lambda_1 \left [ \hat v_2 + (1-\partial_x^2)^{-1}\partial_x^{-1}\partial_c\phi  - 2\lambda_2 \partial_c\phi \right ],
 \end{equation*}
which, after inverting $J$ in $L^2_{\nu}$ with $\nu > 0$, gives 
\begin{equation*}
L \hat{v}_3 = \lambda_1 \left [ (1-\partial_x^2) \partial_x^{-1} \hat v_2 + \partial_x^{-2}\partial_c\phi  - 2\lambda_2\partial_x^{-1} \partial_c\mu \right ].
 \end{equation*}
By using (\ref{inner-product}) we check the Fredholm condition
for the existence of solutions $\hat{v}_3 \in L^2_{\nu}$:
\begin{align*}
	2\lambda_2 &= \frac{\langle \phi', \partial_x^{-1}\left [(1-\partial_x^2)\hat v_2 + \partial_x^{-1}\partial_c\phi\right ]\rangle}{\langle \phi',\partial_x^{-1}\partial_c\mu \rangle}\\
	&= \frac{\langle \phi -k, (1-\partial_x^2)\hat v_2\rangle +\langle \phi -k,  \partial_x^{-1}\partial_c\phi\rangle}{\frac{d}{dc}E_{\rm 1D}(\psi)}.
\end{align*}
For the first term in the numerator, we use (\ref{derivative-c}) and obtain
\begin{align*}
	\langle \phi -k, (1-\partial_x^2)\hat v_2\rangle
	&= 	\langle \mu - k, \hat v_2\rangle =  -	\langle L \partial_c \phi, \hat{v}_2 \rangle = - \langle \partial_c \phi, L \hat{v}_2 \rangle \\
	&= \lambda_1^2 \langle\partial_c \phi, \partial_x^{-1}\partial_c\mu\rangle
	+ \langle\partial_c \phi, \partial_x^{-1}(\phi -k)\rangle.
\end{align*}
We use the even parity of $\phi$ for which
$\int_{+\infty}^x (\phi - k) dx' = -\frac{1}{2} \int_{-\infty}^{\infty}(\phi - k)dx' + \int_0^x (\phi - k) dx'$, where the second term is odd, and obtain
\begin{align*}
	\langle \phi -k,  \partial_x^{-1}\partial_c\phi\rangle &=- \frac{1}{2}  \int_{-\infty}^{\infty}(\phi-k)dx
	\left(\int_{-\infty}^{\infty} \partial_c\phi dx \right)
	 = -\frac{1}{2} M_{\rm 1D}(\psi) \frac{d}{dc} M_{\rm 1D}(\psi), \\
	 \langle  \partial_c\phi, \partial_x^{-1} (\phi -k) \rangle
	 &= - \frac{1}{2}
	 \left(\int_{-\infty}^{\infty} \partial_c\phi dx \right)
	 \int_{-\infty}^{\infty}(\phi-k)dx
	 = -\frac{1}{2} M_{\rm 1D}(\psi) \frac{d}{dc} M_{\rm 1D}(\psi), \\
	 \langle\partial_c \phi, \partial_x^{-1}\partial_c\mu\rangle
	 &= - \frac{1}{2}
	 \left(\int_{-\infty}^{\infty} \partial_c\phi dx \right)
	 \int_{-\infty}^{\infty} \partial_c \mu dx
	 = -\frac{1}{2} \left( \frac{d}{dc} M_{\rm 1D}(\psi) \right)^2,
\end{align*}
which then yields
\begin{align}
	2\lambda_2
	&=\frac{\frac{d}{dc}M_{\rm 1D}(\psi)}{\frac{d}{dc}E_{\rm 1D}(\psi)} \left [ \frac{\|\psi\|_{L^2}^2}{2\frac{d}{dc}E_{\rm 1D}(\psi)} \frac{d}{dc} M_{\rm 1D}(\psi) -M_{\rm 1D}(\psi) \right ] \notag \\
	&=\frac{\frac{d}{dc}M_{\rm 1D}(\psi)}{2\left(\frac{d}{dc}E_{\rm 1D}(\psi)\right )^2} \left [ \|\psi\|_{L^2}^2 \frac{d}{dc} M_{\rm 1D}(\psi)  - 2 M_{\rm 1D}(\psi)\frac{d}{dc}E_{\rm 1D}(\psi)\right],
	\label{lambda-2}
\end{align}
where we have used (\ref{lambda-1}) for $\lambda_1^2$.

In order to identify the sign of $\lambda_2$, we recall from Lemma \ref{lem-monotonicity} that the mappings $c \mapsto M_{\rm 1D}(\psi)$
and $c \mapsto E_{\rm 1D}(\psi)$ are monotonically increasing.
Hence, the sign of $\lambda_2$ is equivalent to the sign of
\begin{align*}
& \|\psi\|_{L^2}^2 \frac{d}{dc} M_{\rm 1D}(\psi)  - 2 M_{\rm 1D}(\psi)\frac{d}{dc}E_{\rm 1D}(\psi) \\
&= \frac{4k\sqrt{c-k}}{\sqrt{c-3k}} \left[ (7k-2c) (\xi_0 \sqrt{1+\xi_0^2} + {\rm arcsinh} \xi_0) - 2k \xi_0 \sqrt{(1+\xi_0^2)^3}\right], \quad
\xi_0 := \frac{\sqrt{c-3k}}{\sqrt{2k}},
\end{align*}
where we have substituted explicit expressions from Lemma \ref{lem-monotonicity} and Remark \ref{remark-norm}.
Since $c > 3k$, we obtain
\begin{align*}
& \|\psi\|_{L^2}^2 \frac{d}{dc} M_{\rm 1D}(\psi)  - 2 M_{\rm 1D}(\psi)\frac{d}{dc}E_{\rm 1D}(\psi) \\
& \qquad \leq \frac{4k^2\sqrt{c-k}}{\sqrt{c-3k}} \left[ \xi_0 \sqrt{1+\xi_0^2} + {\rm arcsinh} \xi_0 - 2\xi_0 \sqrt{(1+\xi_0^2)^3}\right], \\
& \qquad = -\frac{4k^2\sqrt{c-k}}{\sqrt{c-3k}} \xi_0 \sqrt{1+\xi_0^2}
\left[ 1 + 2 \xi_0^2 - \frac{\log(\xi_0 + \sqrt{1+\xi_0^2})}{\xi_0 \sqrt{1+\xi_0}^2} \right],
\end{align*}
where we have used ${\rm arcsinh} \xi_0 = \log(\xi_0 + \sqrt{1+\xi_0^2})$.
Since $\log(\xi_0 + \sqrt{1+\xi_0^2}) < \xi_0 \sqrt{1 + \xi_0^2}$ for every $\xi_0 > 0$, the expression in the bracket is positive so that  $\lambda_2 < 0$. This yields  ${\rm Re}(\lambda_+(\eta)) = {\rm Re}(\lambda_-(\eta)) < 0$ at the leading order and hence for sufficiently small $\eta \neq 0$.
\end{proof}

\begin{remark}
	\label{rem-resonances}
	In the KdV limit (\ref{KdV-soliton}) as $c \to 3k$, we can simplify the expressions (\ref{lambda-1}) and (\ref{lambda-2}) 
	for $\lambda_1$ and $\lambda_2$ to obtain 
\begin{align*}
\lambda_1^2 &= -\frac{\sqrt{2k}}{2 \sqrt{c-3k}} \left[ 4 (c-3k) \xi_0 - \frac{8}{3} k \xi_0^3 + \mathcal{O}(\xi_0^5) \right] \sim -\frac{4}{3} (c-3k)
\end{align*}
	and
\begin{align*}
2\lambda_2 &= \frac{k}{(c-3k)^2} \left[ 4(3k-c) \xi_0 - \frac{8}{3} k \xi_0^3 + \mathcal{O}(\xi_0^5) \right] \sim  -\frac{8\sqrt{2k}}{3 \sqrt{c-3k}},
\end{align*}
where we have used the explicit expressions in the proof of Lemma \ref{lem-monotonicity} and the asymptotic limit $\xi_0 \to 0$. Extracting the postive square root for $\lambda_1$ yields the expansion for $\lambda_{\pm}(\eta)$ in the form
$$
\lambda_{\pm}(\eta) = \pm \frac{2i}{\sqrt{3}} \sqrt{c-3k} \eta - \frac{4}{3} \frac{\sqrt{2k}}{\sqrt{c-3k}} \eta^2 + \mathcal{O}(\eta^3).
$$
Using the KP-II scaling (\ref{scaling-KP}) and (\ref{KdV-soliton}) with 
$\eta = \varepsilon^2 \Upsilon$ and $c - 3 k = \varepsilon^2$, we obtain 
$$
\varepsilon^{-3} \lambda_{\pm}(\varepsilon^2 \Upsilon) = \pm \frac{2i}{\sqrt{3}} \Upsilon - \frac{4}{3} \sqrt{2k} \Upsilon^2 + \mathcal{O}(\Upsilon^3),
$$
which is the asymptotic expansion of the exact expression of the pair of eigenvalues $\Lambda_{\pm}(\Upsilon)$ of the corresponding linearized operator for the KP-II equation (\ref{KP-II}),
\begin{equation}
\label{resonance-KP}
\Lambda_{\pm}(\Upsilon) = \pm \frac{2i}{\sqrt{3}} \Upsilon \sqrt{1 \pm \frac{4i}{\sqrt{3}} \sqrt{2k} \Upsilon},
\end{equation}
see  \cite{M}. 
\end{remark}

\begin{remark}
	The continuous spectrum of $J(L + \eta^2 \partial_x^{-2})$ in $L^2_{\nu}$ deforms to $i \R$ as $\nu \to 0$, which can be seen by taking the limit $\nu \to 0$ in equation \eqref{2.10}. On the other hand, the location of the simple eigenvalues $\lambda_{\pm}(\eta)$ is independent of $\nu$ for 
	$\eta \in (-\eta_0,\eta_0)$ and $\nu \in (0,\nu_0)$ as follows from (\ref{lambda-1}) and (\ref{lambda-2}). As a result, 
	the continuous spectrum crosses the location of the simple eigenvalues for some $\nu_1 \in (0,\nu_0)$ that depends on $\eta \neq 0$.
	Consequently, as is shown in \cite{PW94}, the simple eigenvalues
	of $J(L + \eta^2 \partial_x^{-2})$  in $L^2_{\nu}$ for $\nu \in (\nu_1,\nu_0)$ are no longer eigenvalues of $J(L + \eta^2 \partial_x^{-2})$ in $L^2_{\nu}$ for $\nu \in (0,\nu_1)$ and in $L^2(\R)$, because they are associated with the eigenfunctions growing exponentially as $x \to -\infty$. Such points are referred to as {\em resonances} of the linear operator 
	$J(L + \eta^2 \partial_x^{-2})$, see \cite{PW94}.
\end{remark}

\section{Proof of Theorem \ref{theorem-KP}}
\label{sec-4}
	
\subsection{Preliminary results} 

We consider the spectral stability problem in the form (\ref{eq-EVproblem}). 
Writing $\phi = k + \psi$ and $c = 3k + \gamma$, we can rewrite 
the spectral problem (\ref{eq-EVproblem}) in the equivalent form
\begin{equation}
\label{spectral-prob}
\partial_x (1-\partial_x^2)^{-1} \left( 
\gamma - 3\psi + \psi'' - \partial_x(\gamma-\psi)\partial_x
 -2k \partial_x^2 + \eta^2 \partial_x^{-2} \right) \hat{v} = \lambda \hat{v}.
\end{equation}

In order to analyze the spectral problem (\ref{spectral-prob}) in the limit of  small-amplitude solitary waves, we give a rigorous proof of  the approximation result in Remark \ref{remark-reductionKdV} and justify the asymptotic approximation (\ref{KdV-soliton}). The following lemma presents this asymptotic result. 

\begin{lemma}
	\label{lem-approx}
	There exists $\varepsilon_0 > 0$ and $C_0 > 0$ such that for 
	every $\varepsilon \in (0,\varepsilon_0)$ the solitary wave solution 
	of Lemma \ref{lem-solitary-wave} satisfying $\psi(0) = c-3k$ and $\psi'(0) = 0$ can be written in the form
	\begin{equation}
	\label{KdV-approximation}
	\psi(x) = \varepsilon^2 \Psi_{\rm KdV}(X) + \varepsilon^4 \tilde{\Psi}(X), \quad X = \varepsilon x, \quad c = 3k + \varepsilon^2,
	\end{equation}
	where
	$$
	\Psi_{\rm KdV}(X) := {\rm sech}^2\left(\frac{X}{2 \sqrt{2k}}\right) \quad 
	\mbox{\rm and} \quad \| \tilde{\Psi} \|_{L^{\infty}} \leq C_0.
	$$
\end{lemma}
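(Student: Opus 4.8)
The plan is to set up the problem as a regular perturbation about the KdV soliton and solve for the correction $\tilde\Psi$ via a fixed-point argument in a suitable exponentially weighted space. Starting from the first-order invariant (\ref{psi-invariant}), I would substitute $c = 3k + \varepsilon^2$, rescale $X = \varepsilon x$, and write $\psi(x) = \varepsilon^2 W(X)$ so that $\psi' = \varepsilon^3 W'$. With this scaling the invariant becomes
\begin{equation*}
(W')^2 = W^2\,\frac{1 - W}{2k + \varepsilon^2(1 - W)},
\end{equation*}
after dividing through by $\varepsilon^6$ and substituting $c-3k-\psi = \varepsilon^2(1-W)$ and $c-k-\psi = 2k + \varepsilon^2(1-W)$. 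Expanding the right-hand side in $\varepsilon^2$ gives $(W')^2 = \tfrac{1}{2k}W^2(1-W) + \mathcal{O}(\varepsilon^2)$, whose $\varepsilon = 0$ limit is exactly the first-order ODE solved by $\Psi_{\rm KdV}(X) = {\rm sech}^2(X/(2\sqrt{2k}))$ with the normalization $W(0)=1$, $W'(0)=0$ enforced by $\psi(0)=c-3k$. This confirms the leading-order profile.

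Next I would insert the ansatz $W(X) = \Psi_{\rm KdV}(X) + \varepsilon^2 \tilde\Psi(X)$ into the rescaled invariant and derive the equation governing $\tilde\Psi$. Rather than work with the first-order (squared) invariant, which is singular at the turning point $X=0$ where $W'=0$, I would prefer to differentiate once and work with the second-order profile equation—the rescaled form of (\ref{eq-two}), namely the analogue of $-(c-\phi)^2(\phi''-\phi) = k(c-k)^2$ written in the variables $W$ and $X$. Linearizing this second-order equation about $\Psi_{\rm KdV}$ produces an inhomogeneous equation of the form $\mathcal{L}_{\rm KdV}\tilde\Psi = R(X;\varepsilon)$, where $\mathcal{L}_{\rm KdV} := -2k\,\partial_X^2 + (1 - 3\Psi_{\rm KdV})$ is the linearized KdV operator and $R$ collects the $\mathcal{O}(\varepsilon^2)$ remainder terms, all of which decay exponentially in $X$ because $\Psi_{\rm KdV}$ and its derivatives do. The operator $\mathcal{L}_{\rm KdV}$ has a one-dimensional kernel spanned by $\Psi_{\rm KdV}'$ (translational mode), so I would either impose the solvability (Fredholm) condition and use the evenness of $R$ against the odd kernel element to conclude automatic orthogonality, or fix the translation by the normalization $\tilde\Psi'(0)=0$ to restrict to the even subspace where $\mathcal{L}_{\rm KdV}$ is invertible with exponentially localized Green's function.

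From here the estimate $\|\tilde\Psi\|_{L^\infty} \leq C_0$ follows by a contraction-mapping argument: the full equation for $\tilde\Psi$ can be written as $\tilde\Psi = \mathcal{L}_{\rm KdV}^{-1} N(\tilde\Psi;\varepsilon)$ on the even, exponentially weighted subspace, where $N$ is smooth in $\tilde\Psi$ and uniformly bounded for $\varepsilon \in (0,\varepsilon_0)$; choosing $\varepsilon_0$ small makes this map a contraction on a ball of radius $C_0$, yielding a unique solution with the claimed bound. The main obstacle is the turning point at $X = 0$, where the square-root structure of the first-order invariant (\ref{psi-invariant}) degenerates and a naive expansion of $\psi$ from that invariant is not uniformly valid; passing to the second-order equation (\ref{eq-two}), which is regular at the maximum, is the key device that circumvents this and lets the standard perturbation theory for the KdV soliton go through. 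A secondary technical point is controlling the remainder $R$ uniformly in $\varepsilon$, which requires the exponential decay of $\Psi_{\rm KdV}$ to absorb the nonlocal prefactor $[2k + \varepsilon^2(1-W)]^{-1}$ and its derivatives; this is routine once the weighted space is fixed.
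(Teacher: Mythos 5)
Your proposal follows essentially the same route as the paper's proof: rescale via $\psi = \varepsilon^2 \Psi(\varepsilon x)$, $c = 3k+\varepsilon^2$, pass from the degenerate first-order invariant to the (regular) second-order profile equation, observe that the linearization $-\partial_X^2 + \tfrac{1}{2k}(1-3\Psi_{\rm KdV})$ has odd kernel $\Psi_{\rm KdV}'$ and is therefore invertible on even functions, and solve for the correction perturbatively. The only cosmetic difference is that you run the contraction mapping by hand in a weighted space, whereas the paper packages the same argument as the implicit function theorem in $H^2(\R)$ and gets the $L^\infty$ bound from the embedding $H^2(\R)\hookrightarrow L^\infty(\R)$.
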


\begin{proof}
	Substituting $\psi(x) = \varepsilon^2 \Psi(X)$, $X = \varepsilon x$, and $c = 3k + \varepsilon^2$ into (\ref{psi-invariant}) yields the first-order invariant
	$$
	(\Psi')^2 = \Psi^2 \frac{1-\Psi}{2k + \varepsilon^2(1-\Psi)},
	$$
	for some $\Psi \in H^2(\R)$.  The function
	$\Psi_{\rm KdV}$ is a solution of the above equation in the limit $\varepsilon \to 0$. 
	To prove \eqref{KdV-approximation} we differentiate the first-order invariant and obtain the second-order equation in the form $F(\Psi,\varepsilon^2) = 0$, where 
	$F(\Psi,\varepsilon^2) : H^2(\R) \times \R \to L^2(\R)$ is the operator function given by   
	$$
	F(\Psi,\varepsilon^2) := -\Psi'' + \Psi \frac{k(2-3\Psi) + \varepsilon^2 (1-\Psi)^2}{(2k + \varepsilon^2(1-\Psi))^2}.
	$$
	It is clear that $F$ is a $C^1$ function near $(\Psi_{\rm KdV},0)$ satisfying 
	$$
	F(\Psi_{\rm KdV},0) = -\Psi_{\rm KdV}'' +  \frac{1}{4k} \Psi_{\rm KdV} (2-3\Psi_{\rm KdV}) = 0
	$$
	and 
	$$
	D_{\Psi} F(\Psi_{\rm KdV},0) = -\partial_x^2 +  \frac{1}{2k} (1-3\Psi_{\rm KdV}).
	$$
Since $0$ is a simple eigenvalue of $D_{\Psi} F(\Psi_{\rm KdV},0)$ with odd eigenfunction $\Psi_{\rm KdV}'$, and the rest of its spectrum is bounded away from $0$, the operator $D_{\Psi} F(\Psi_{\rm KdV},0)$ is invertible in the subspace of  even functions in $H^2(\R)$. By the implicit function theorem, 
there exists a unique $C^1$ mapping $\varepsilon^2 \mapsto \Psi(\cdot,\varepsilon^2) \in H^2(\R)$ which yields the unique even solution of $F(\Psi(\cdot,\varepsilon^2),\varepsilon^2) = 0$ for small $\varepsilon^2$ such that $\Psi(\cdot,\varepsilon^2) \to \Psi_{\rm KdV}$ as $\varepsilon^2 \to 0$. The decomposition (\ref{KdV-approximation}) follows from the $C^1$ property of this mapping and the continuous embedding of $H^2(\R)$ into $L^{\infty}(\R)$.
\end{proof}

The KP-II scaling (\ref{scaling-KP}) and (\ref{KdV-soliton}) corresponds to 
\begin{equation}
\label{scaling-KP-lin}
\lambda =\varepsilon^3\Lambda, \quad 
\eta = \varepsilon^2 \Upsilon, \quad 
\gamma = \varepsilon^2, \quad  
x = \varepsilon^{-1} X, \quad 
\hat{v}(x) = \hat{V}(X).
\end{equation}
By Lemma \ref{lem-approx}, we can also write 
\begin{equation}
\label{decomp-kdv}
\psi(x) = \varepsilon^2 \Psi(X), \quad \Psi := \Psi_{\rm KdV} + \varepsilon^2 \tilde{\Psi}, \quad c = 3k + \varepsilon^2.
\end{equation}
The spectral problem (\ref{spectral-prob})  can then be rewritten as
\begin{equation}
\label{equiv-1}
\partial_{X}  (1-\varepsilon^2\partial_{X}^2)^{-1} \left( L_{\rm KdV} + \varepsilon^2 L_{\rm pert} + \Upsilon^2 \partial_{X}^{-2} \right) \hat{V} = \Lambda \hat{V},
\end{equation}
where 
\begin{equation*}
L_{\rm KdV} := 1 - 3 \Psi_{\rm KdV} - 2k \partial_{X}^2, \quad 
L_{\rm pert} := \Psi'' - \partial_{X} (1-\Psi) \partial_{X} - 3 \tilde{\Psi}.
 \end{equation*}
Since 
$$
\nu_0 = \frac{\sqrt{c-3k}}{\sqrt{c-k}} = \frac{\varepsilon}{\sqrt{2k + \varepsilon^2}}
$$ 
in Lemma \ref{lemma-continuous}, we need to rescale the exponential weight $\nu$   as $\nu = \varepsilon \rho$ and replace the weighted space (\ref{weighted space}) by 
\begin{equation*}
L_{\rho}^2 := \{ F(X) : \R \to \R : \quad e^{\rho \cdot} F \in L^2(\R)\}.
\end{equation*}
The parameter $\rho$ is fixed in $(0,\rho_0)$, where 
$\rho_0 := 1/\sqrt{2k}$. In order to prove Theorem \ref{theorem-KP}, we consider the resolvent equations obtained from the spectral stability problem (\ref{spectral-prob}) in the original variables and (\ref{equiv-1}) in the scaled variables. The two resolvent 
equations are used in two different regions:
\begin{itemize}
	\item {\em the high-frequency region} with $|\eta| \geq K^2_0 \varepsilon^2$ for sufficiently large $K_0 > 0$;
	\item {\em the low-frequency region} with $|\eta| \leq K^2 \varepsilon^2$ for every fixed $K > 0$.
\end{itemize}
Combining the two regions  covers the entire range of $\eta$ values since $K$ can be taken to be greater than $K_0$. Estimates in Lemma \ref{lemma-high} and Lemma \ref{lem-low-frequency} below prove the result of Theorem \ref{theorem-KP}.

\subsection{The high-frequency region}

We start with the following result, which is a generalization of \cite[Lemma 3.1]{MS1} obtained for the linearized KP-II equation and extended here 
for the spectral problem (\ref{equiv-1}).

\begin{proposition} 
	For every $\rho \in (0,\rho_0)$ 
	there exist $\varepsilon_0 > 0$ and $\beta_0 > 0$ such that for every  $\varepsilon \in (0,\varepsilon_0)$, $\Upsilon \in \mathbb{R}$, and every $\Lambda \in \mathbb{C}$ satisfying ${\rm Re}(\Lambda) > -\beta_0$, we have 
	\begin{equation}
	\label{estimate-1}
	\| \left(\Lambda - \partial_{X} (1- \varepsilon^2 \partial_X^2)^{-1} ( 1 - (2k + \varepsilon^2) \partial_X^2 + \Upsilon^2 \partial_{X}^{-2}  ) \right)^{-1}\|_{ L_{\rho}^2 \to  L_{\rho}^2} \leq ({\rm Re}(\Lambda) + \beta_0)^{-1}. 
	\end{equation}
	Moreover, there exists $C > 0$ such that 
	\begin{align}
	& \| \partial_X (1 - \varepsilon^2 \partial_X^2)^{-1} \left(\Lambda - \partial_{X} (1- \varepsilon^2 \partial_X^2)^{-1} ( 1 - (2k + \varepsilon^2)  \partial_X^2 + \Upsilon^2 \partial_{X}^{-2}  ) \right)^{-1}\|_{ L_{\rho}^2 \to  L_{\rho}^2} \notag \\
	& \qquad 
\leq C \left({\rm Re}(\Lambda) + \beta_0 \right)^{-1/2}.\label{estimate-2}
	\end{align}
	if ${\rm Re}(\Lambda) > -\frac{1}{2} \beta_0$.
	\label{prop-estimate}	
\end{proposition}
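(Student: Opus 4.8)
The plan is to exploit that the operator under the resolvent has constant coefficients in $X$, so it diagonalizes under the Fourier transform once the exponential weight is removed by conjugation. Writing $q := i\ell - \rho$ with $\ell \in \R$, conjugation by the isometry $L^2_\rho \ni F \mapsto e^{\rho X} F \in L^2$ replaces $\partial_X$ by $\partial_X - \rho$ and turns the operator $\partial_X(1-\varepsilon^2\partial_X^2)^{-1}(1 - (2k+\varepsilon^2)\partial_X^2 + \Upsilon^2 \partial_X^{-2})$ into the Fourier multiplier with symbol
$$
m(\ell) = q(1-\varepsilon^2 q^2)^{-1}\left[ 1 - (2k+\varepsilon^2)q^2 + \Upsilon^2 q^{-2} \right],
$$
which is well defined because $|q|^2 = \ell^2 + \rho^2 \geq \rho^2 > 0$. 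Comparing with \eqref{2.10} and the scaling \eqref{scaling-KP-lin} shows that $m(\ell) = \varepsilon^{-3}\lambda(\varepsilon\ell)$ with $\nu = \varepsilon\rho$ and $c = 3k + \varepsilon^2$, which lets me reuse the real-part formula derived in the proofs of Lemma \ref{lemma-spectrum} and Lemma \ref{lemma-continuous}. The spectrum of a Fourier multiplier is the closure of the range of its symbol, so the first task is to bound ${\rm Re}(m(\ell))$ away from zero; once this is done, Plancherel's theorem identifies the two operator norms in \eqref{estimate-1}--\eqref{estimate-2} with $\sup_{\ell}|\Lambda - m(\ell)|^{-1}$ and $\sup_{\ell}|p(\ell)|\,|\Lambda - m(\ell)|^{-1}$, where $p(\ell) := q(1-\varepsilon^2 q^2)^{-1}$ is the symbol of $\partial_X(1-\varepsilon^2\partial_X^2)^{-1}$.

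The central estimate is a \emph{uniform} upper bound ${\rm Re}(m(\ell)) \leq -\beta_0$, valid for all $\ell \in \R$, $\Upsilon \in \R$, and $\varepsilon \in (0,\varepsilon_0)$. Transcribing the real-part computation of Lemma \ref{lemma-continuous} under the scaling gives ${\rm Re}(m(\ell)) = -\rho B/D + R$, where $D = (1+\varepsilon^2(\ell^2-\rho^2))^2 + 4\varepsilon^4\rho^2\ell^2$, $B = 1 - 2(k+\varepsilon^2)\rho^2 + 2(3k+\varepsilon^2)\ell^2 + (2k+\varepsilon^2)\varepsilon^2(\ell^2+\rho^2)^2$, and $R \leq 0$ is the contribution of the $\Upsilon^2$ term, which is manifestly nonpositive and bounded because $|q|^2 \geq \rho^2$, so it only improves the bound uniformly in $\Upsilon$. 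The condition $\rho < \rho_0 = 1/\sqrt{2k}$ (equivalently $\nu < \nu_0$, which holds for $\varepsilon_0$ small) makes the frequency-independent part $1 - 2(k+\varepsilon^2)\rho^2$ positive, whence $B \gtrsim 1 + \ell^2$ uniformly. Splitting into the regime $\varepsilon^2\ell^2 \leq 1$, where $D$ is bounded and $B$ is bounded below, and the regime $\varepsilon^2\ell^2 \geq 1$, where $B \gtrsim \varepsilon^2\ell^4$ and $D \lesssim \varepsilon^4\ell^4$ so that $B/D \gtrsim \varepsilon^{-2}$, yields $\inf_\ell(B/D) > 0$ uniformly in $\varepsilon$, hence the required $\beta_0 > 0$. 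Estimate \eqref{estimate-1} then follows immediately from $|\Lambda - m(\ell)| \geq {\rm Re}(\Lambda) - {\rm Re}(m(\ell)) \geq {\rm Re}(\Lambda) + \beta_0$ whenever ${\rm Re}(\Lambda) > -\beta_0$.

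For the smoothing estimate \eqref{estimate-2} I would factor
$$
\frac{|p(\ell)|}{|\Lambda - m(\ell)|} = \left( \frac{|p(\ell)|^2}{|\Lambda - m(\ell)|} \right)^{1/2} \frac{1}{|\Lambda - m(\ell)|^{1/2}}.
$$
Since $|p(\ell)|^2 = (\ell^2 + \rho^2)/D$ while $|{\rm Re}(m(\ell))| \geq \rho B/D$, the comparison $B \gtrsim 1 + \ell^2$ gives $|p(\ell)|^2 \leq C|{\rm Re}(m(\ell))|$ uniformly in $\ell,\Upsilon,\varepsilon$. When ${\rm Re}(\Lambda) > -\tfrac{1}{2}\beta_0$, the bound ${\rm Re}(m(\ell)) \leq -\beta_0$ also gives $|\Lambda - m(\ell)| \geq {\rm Re}(\Lambda) - {\rm Re}(m(\ell)) \geq \tfrac{1}{2}|{\rm Re}(m(\ell))|$, so the first factor is bounded by $(2C)^{1/2}$; the second factor is at most $({\rm Re}(\Lambda)+\beta_0)^{-1/2}$ by \eqref{estimate-1}. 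Together these produce \eqref{estimate-2}.

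The hard part will be the uniform-in-$\varepsilon$ lower bound on $-{\rm Re}(m(\ell))$ and the comparison $|p(\ell)|^2 \leq C|{\rm Re}(m(\ell))|$: the symbol is a ratio whose numerator and denominator both degenerate as $\varepsilon \to 0$ and both grow at high frequency, so the positivity furnished by $\rho < \rho_0$ at bounded frequencies must be matched against the high-frequency balance $B/D \gtrsim \varepsilon^{-2}$ in order to keep $\beta_0$ and $C$ independent of $\varepsilon$ and $\Upsilon$.
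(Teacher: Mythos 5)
Your proposal is correct and follows essentially the same route as the paper: pass to the Fourier symbol in the conjugated (weighted) variables, establish a uniform bound ${\rm Re}(m(\ell)) \leq -\beta_0$ using $\rho < 1/\sqrt{2k}$ for the frequency-independent part and the $\varepsilon^2\ell^4$ growth at high frequencies, then gain the square root in \eqref{estimate-2} by absorbing $|p(\ell)|^2 = (\ell^2+\rho^2)/D$ into the frequency-dependent part of $-{\rm Re}(m(\ell))$. Your regime-splitting $\varepsilon^2\ell^2 \lessgtr 1$ and the factorization $|p|/|\Lambda-m| = (|p|^2/|\Lambda-m|)^{1/2}|\Lambda-m|^{-1/2}$ are only cosmetic repackagings of the paper's chain of algebraic inequalities for $D$ and its bound $|\Lambda - \Lambda(\Xi)| \geq \tfrac{1}{2}\beta_0 + \gamma_0\Xi^2/(1+\varepsilon^2(\Xi^2+\rho^2))$.
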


\begin{proof}
Since the operators in the estimates (\ref{estimate-1}) and (\ref{estimate-2}) have constant coefficients, we can use the Fourier transform in $X$ and introduce the spectral function
$$
	\Lambda(\Xi) := (i \Xi - \rho) 
	[1 - \varepsilon^2 (i \Xi - \rho)^2]^{-1} 
	[1 - (2k + \varepsilon^2) (i \Xi - \rho)^2 + \Upsilon^2 (i \Xi - \rho)^{-2}],
$$
for $\Upsilon \in \R$. The function $\Lambda(\Upsilon)$ is a scaled version of the function $\lambda(\xi)$ in (\ref{2.10}). We deduce the explicit expression as in the proof 
of Lemmas \ref{lemma-spectrum} and \ref{lemma-continuous}:
\begin{align}
	{\rm Re} \left(\Lambda(\Xi)\right) &= - \rho \Big[ 1 + 
	\frac{2k(3 \Xi^2 - \rho^2 + \varepsilon^2 (\Xi^2 - \rho^2)^2)}{1 + 2 \varepsilon^2 (\Xi^2 - \rho^2) + \varepsilon^4 (\Xi^2 + \rho^2)^2} 
	\notag \\
	& \qquad 
	 + \frac{\Upsilon^2 (1 + 3 \varepsilon^2 \Xi^2 - \varepsilon^2 \rho^2)}{(\Xi^2 + \rho^2) [1 + 2 \varepsilon^2 (\Xi^2 - \rho^2) + \varepsilon^4 (\Xi^2 + \rho^2)^2] }\Big].
	 \label{Lambda-expression}
\end{align}
Since 
$$
1 - 2 \varepsilon^2 \rho^2 \leq 1 + 2 \varepsilon^2 (\Xi^2 - \rho^2) + \varepsilon^4 (\Xi^2 + \rho^2)^2 \leq [1 + \varepsilon^2 (\Xi^2 + \rho^2)]^2,
$$
we have 
\begin{align}
-{\rm Re} (\Lambda(\Xi)) & \geq \rho \left[ 1 - 2k \rho^2 
+ \frac{2k(- \varepsilon^2 \rho^4 + 3 \Xi^2 + \varepsilon^2 \Xi^4 
	+ \varepsilon^4 \rho^2 (\Xi^2 + \rho^2)^2)}{1 + 2 \varepsilon^2 (\Xi^2 - \rho^2) + \varepsilon^4 (\Xi^2 + \rho^2)^2} \right] \notag \\
& \geq \rho \left[ 1 - 2k \rho^2 - \frac{2k \varepsilon^2 \rho^4}{1 - 2 \varepsilon^2 \rho^2} + 
\frac{2k  [\Xi^2 (3 + \varepsilon^2 \Xi^2) + \varepsilon^4 \rho^2 (\Xi^2 + \rho^2)^2]}{[1 + \varepsilon^2 (\Xi^2 + \rho^2)]^2}  \right]
\label{bound-Xi}
\end{align}
uniformly for all $\Upsilon \in \R$. Therefore, there exists $\rho_0 = 1/\sqrt{2k}$ such that for every $\rho \in (0,\rho_0)$ 
there exists $\varepsilon_0 > 0$ and $\beta_0 > 0$ such that $-{\rm Re} \Lambda(\Xi) \geq \beta_0$ for every $\varepsilon \in (0,\varepsilon_0)$
uniformly for all $\Xi \in \R$. For instance, we can choose 
$$
\beta_0 := \rho \left[ 1 - 2k \rho^2 - \frac{2k \varepsilon_0^2 \rho^4}{1 - 2 \varepsilon_0^2 \rho^2} \right] > 0
$$
for a suitable choice of $\varepsilon_0 > 0$. Hence, for every $\Lambda \in \mathbb{C}$ satisfying ${\rm Re}(\Lambda) > -\beta_0$, we have 
$$
|\Lambda - \Lambda(\Xi)| \geq ({\rm Re}(\Lambda) + \beta_0)
$$
and the bound (\ref{estimate-1}) holds from standard Fourier estimates.

For the bound (\ref{estimate-2}), we obtain from (\ref{bound-Xi}) 
that there exists $\gamma_0 > 0$ such that 
 \begin{align*}
 -{\rm Re} (\Lambda(\Xi)) & \geq \beta_0 + \frac{\gamma_0   \Xi^2}{1 + \varepsilon^2 (\Xi^2 + \rho^2)}.
 \end{align*}
For instance, we can choose $\gamma_0 := 2k \rho$ since $\varepsilon_0 \rho_0 < 1$. Hence for every $\Lambda \in \mathbb{C}$ satisfying ${\rm Re}(\Lambda) > -\frac{1}{2} \beta_0$, we have 
\begin{equation}
|\Lambda - \Lambda(\Xi)| \geq \frac{1}{2} \beta_0 + \frac{\gamma_0   \Xi^2}{1 + \varepsilon^2 (\Xi^2 + \rho^2)}.
\label{one-more-bound}
\end{equation}
Since there exists $C_0 \in (0,1)$ such that 
$$
1 + 2 \varepsilon^2 (\Xi^2 - \rho^2) + \varepsilon^4 (\Xi^2 + \rho^2)^2 
\geq C_0 [1 + \varepsilon^2 (\Xi^2 + \rho^2)]^2,
$$
we obtain 
\begin{align}
\notag
\frac{|i\Xi - \rho|}{
	|1 - \varepsilon^2 (i\Xi - \rho)^2| |\Lambda - \Lambda(\Xi)|} &\leq \frac{C \sqrt{\Xi^2 + \rho^2}}{|1 + \varepsilon^2 (\Xi^2 + \rho^2)| |\Lambda - \Lambda(\Xi)|} \\
\notag
 &\leq \frac{C}{\sqrt{1 + \varepsilon^2 (\Xi^2 + \rho^2)}\sqrt{|\Lambda - \Lambda(\Xi)|}} \\
&\leq C ({\rm Re}(\Lambda) + \beta_0)^{-1/2},
\label{estimate-tech}
\end{align}
for some generic constants $C > 0$ uniformly in $\Xi \in \mathbb{R}$. The bound (\ref{estimate-2}) follows again from  Fourier theory.
\end{proof}

In order to complete the estimates in the high-frequency region, we obtain a modified 
version of Proposition \ref{prop-estimate}.

\begin{proposition} 
Let $\varepsilon_0 > 0$ and $\beta_0 > 0$ be the same as 
in Proposition \ref{prop-estimate}. There are $K_0 > 0$ and $C_0 > 0$ such 
that for every $\Lambda \in \mathbb{C}$ satisfying ${\rm Re}(\Lambda) > -\frac{1}{2} \beta_0$ and every $\Upsilon \in \R$ satisfying $|\Upsilon| \geq K_0^2$, 
we have
\begin{align}
& \| \partial_X (1 - \varepsilon^2 \partial_X^2)^{-1} \left(\Lambda - \partial_{X} (1- \varepsilon^2 \partial_X^2)^{-1} ( 1 - (2k + \varepsilon^2)  \partial_X^2 + \Upsilon^2 \partial_{X}^{-2}  ) \right)^{-1}\|_{ L_{\rho}^2 \to  L_{\rho}^2} \notag \\
& \qquad 
\leq C K_0^{-1} \left({\rm Re}(\Lambda) + \beta_0 \right)^{-1/2}.	\label{estimate-3}
\end{align}
	\label{prop-estimate-new}	
\end{proposition}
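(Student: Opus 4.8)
The plan is to follow the Fourier-analytic proof of Proposition~\ref{prop-estimate}, exploiting that the operator in (\ref{estimate-3}) has constant coefficients. By the conjugation $e^{\rho X}\partial_X e^{-\rho X}=\partial_X-\rho$, its $L_\rho^2\to L_\rho^2$ norm equals $\sup_{\Xi\in\R}|S(\Xi)|$, where $S(\Xi)$ is the symbol evaluated along the line $i\Xi-\rho$,
$$
S(\Xi)=\frac{i\Xi-\rho}{[1-\varepsilon^2(i\Xi-\rho)^2]\,(\Lambda-\Lambda(\Xi))},
$$
and $\Lambda(\Xi)$ is the spectral function introduced in the proof of Proposition~\ref{prop-estimate}. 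Thus it suffices to prove the pointwise bound $|S(\Xi)|\le CK_0^{-1}({\rm Re}(\Lambda)+\beta_0)^{-1/2}$ uniformly in $\Xi\in\R$, $\varepsilon\in(0,\varepsilon_0)$ and $|\Upsilon|\ge K_0^2$. Writing $R:={\rm Re}(\Lambda)+\beta_0$ and $g:=1+\varepsilon^2(\Xi^2+\rho^2)$, I would reuse from Proposition~\ref{prop-estimate} the bound $|1-\varepsilon^2(i\Xi-\rho)^2|\ge c_0\, g$ for some $c_0>0$, and, from the expansion (\ref{Lambda-expression}),
$$
|\Lambda-\Lambda(\Xi)|\ge {\rm Re}(\Lambda)-{\rm Re}(\Lambda(\Xi))\ge R+T(\Xi)+Q(\Xi),
$$
where $T(\Xi)=\gamma_0\Xi^2/g$ is the longitudinal term already used in (\ref{one-more-bound}), and $Q(\Xi)\ge 0$ is the genuinely new transverse contribution of the term $\Upsilon^2\partial_X^{-2}$, which by (\ref{Lambda-expression}) obeys $Q(\Xi)\ge c\,\Upsilon^2/[(\Xi^2+\rho^2)g^2]$ for some $c>0$.

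The extra factor $K_0^{-1}$ is produced by splitting $\R$ at $|\Xi|=|\Upsilon|^{1/2}$. On the transverse band $|\Xi|\le|\Upsilon|^{1/2}$ one has $\Xi^2+\rho^2\le 2|\Upsilon|$, so $Q(\Xi)\ge c|\Upsilon|/(2g^2)$ is large while the numerator is small, $\sqrt{\Xi^2+\rho^2}\le\sqrt{2|\Upsilon|}$; bounding the denominator below by $Q(\Xi)$ one finds $\sqrt{\Xi^2+\rho^2}/[c_0 g\,(R+T+Q)]\le C g\,|\Upsilon|^{-1/2}$, which supplies a factor $|\Upsilon|^{-1/2}\le K_0^{-1}$. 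On the longitudinal band $|\Xi|>|\Upsilon|^{1/2}$ the transverse term is discarded and one reruns the chain of estimates leading to (\ref{estimate-tech}); now the lower bound $T(\Xi)\ge\gamma_0\Xi^2/g$ converts the decaying prefactor into $\sqrt{\Xi^2+\rho^2}/[g\,T(\Xi)]\le C|\Xi|^{-1}\le C|\Upsilon|^{-1/2}\le CK_0^{-1}$, i.e.\ exactly the bound (\ref{estimate-2}) of Proposition~\ref{prop-estimate} improved by $|\Xi|^{-1}$. In both bands the required resolvent decay $R^{-1/2}$ is recovered from the factor $|\Lambda-\Lambda(\Xi)|\ge R$.

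The hard part is to make this gain \emph{uniform}, because the two bands draw their smallness from different mechanisms and the transverse term $Q$ dominates only at low frequencies. The most delicate point is the bookkeeping of the $\varepsilon$-dependent weight $g=1+\varepsilon^2(\Xi^2+\rho^2)$, which can be as large as $1+\varepsilon^2|\Upsilon|$: one must verify that the prefactor $\sqrt{\Xi^2+\rho^2}/g$, read against $T$ and $Q$, still decays like $|\Upsilon|^{-1/2}$ after these weights are accounted for, and one must match the two estimates across the transition region $|\Xi|\sim|\Upsilon|^{1/2}$, where $Q(\Xi)$ and $T(\Xi)$ are both comparable to $|\Upsilon|$. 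Equally important is the competition between the transverse gain $K_0^{-1}$ and the resolvent factor $R^{-1/2}$: since the transverse improvement is confined to $|\Xi|\lesssim|\Upsilon|^{1/2}$ while the longitudinal band only reproduces the $R^{-1/2}$ behaviour of Proposition~\ref{prop-estimate}, one must check carefully that $R$ can be absorbed into $T+Q$ throughout the relevant spectral region, so that the constant $C$ in (\ref{estimate-3}) is genuinely independent of $K_0$. This is precisely the mechanism behind the high-frequency estimate for the linearized KP-II equation in \cite[Lemma 3.1]{MS1}, of which the present proposition is the two-parameter generalization.
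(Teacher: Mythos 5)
Your strategy coincides with the paper's: reduce to a pointwise bound on the conjugated Fourier symbol, retain the transverse $\Upsilon^2$-contribution to $-{\rm Re}(\Lambda(\Xi))$ in \eqref{Lambda-expression}, and split the frequency line into a band where that term produces the gain and a band where the longitudinal term does (the paper splits at $|\Xi| = K_0$, using $|\Upsilon| \geq K_0^2 \geq K_0|\Xi + i\rho|$ on the inner band; your split at $|\Xi| = |\Upsilon|^{1/2}$ is an inessential variant, and your longitudinal band does go through, since there the factor $g$ cancels exactly against $T = \gamma_0 \Xi^2/g$). The genuine gap is your lower bound for the transverse contribution, $Q(\Xi) \geq c\Upsilon^2/[(\Xi^2+\rho^2)g^2]$: it discards the factor $1 + 3\varepsilon^2\Xi^2 - \varepsilon^2\rho^2$ in the numerator of the last term of \eqref{Lambda-expression}. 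That factor is comparable to $g$ (for $\varepsilon \rho \leq 1/2$ one has $1+3\varepsilon^2\Xi^2 - \varepsilon^2\rho^2 \geq \tfrac{3}{5} g$), and keeping it is precisely what the paper's proof does, obtaining
\[
|\Lambda - \Lambda(\Xi)| \;\geq\; \frac{\rho\,\Upsilon^2 (1+3\varepsilon^2\Xi^2 - \varepsilon^2\rho^2)}{(\Xi^2+\rho^2)\,[1+\varepsilon^2(\Xi^2-\rho^2)]^2} \;\geq\; \frac{c\,\Upsilon^2}{(\Xi^2+\rho^2)\,g},
\]
one full power of $g$ better than your $Q$. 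With this bound the weight cancels on the transverse band: $\sqrt{\Xi^2+\rho^2}/(gQ) \leq (\Xi^2+\rho^2)^{3/2}/(c\Upsilon^2) \leq C|\Upsilon|^{-1/2}$, with no stray $g$. With your $Q$, the stray $g$ that you flag as ``the most delicate point'' cannot be removed, because $g$ is unbounded on the band: $|\Upsilon| \geq K_0^2$ has no upper bound, so $g$ can reach $1 + 2\varepsilon^2|\Upsilon|$. Concretely, in the sub-regime $\varepsilon|\Xi| \geq 1$ (nonempty inside your band once $\varepsilon^2|\Upsilon| \geq 1$) one has $g \approx \varepsilon^2\Xi^2$, $T \approx \gamma_0\varepsilon^{-2}$, and your $Q \approx c\Upsilon^2 \varepsilon^{-4}\Xi^{-6}$; maximizing $\sqrt{\Xi^2+\rho^2}/[g(R+T+Q)]$ over $\Xi$ (the maximum sits at $\Xi^6 \sim \Upsilon^2/\varepsilon^2$, inside the band) yields a value of order $(\varepsilon/|\Upsilon|)^{1/3}$. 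At $|\Upsilon| = K_0^2$ this is $(\varepsilon/K_0^2)^{1/3}$, which exceeds $K_0^{-1}$ whenever $\varepsilon K_0 > 1$ --- a case the proposition allows, since $\varepsilon_0$ is inherited from Proposition \ref{prop-estimate} and is fixed before $K_0$ is chosen large. So your chain of inequalities does not close; the single missing ingredient is the retained factor $1+3\varepsilon^2\Xi^2$.

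A secondary point: your claim that the decay $R^{-1/2}$ is ``recovered from $|\Lambda - \Lambda(\Xi)| \geq R$'' in both bands does not actually yield the stated product $K_0^{-1}R^{-1/2}$, since a single power of $|\Lambda - \Lambda(\Xi)| \geq \max\{R,\, cK_0^2/g\}$ gives at best the geometric mean, i.e.\ a bound of size $K_0^{-1/2}R^{-1/4}$; you correctly sense this tension in your final paragraph. To be fair, the last displayed chain in the paper's own proof has the same looseness, and what is actually used downstream (the bound \eqref{estimate-3-new} in Lemma \ref{lemma-high}) only requires the $CK_0^{-1}$ smallness, with $({\rm Re}(\Lambda)+\beta_0)^{-1/2}$ replaced by the constant $(\beta_0/2)^{-1/2}$. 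So this second issue is shared with the paper and is a defect of the statement rather than of your argument; the decisive defect of your proposal is the lost factor of $g$ in $Q$ described above.
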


\begin{proof}
This follows from the bounds on $\Lambda(\Xi)$ obtained in the proof of Proposition \ref{prop-estimate}. If $|\Xi| \geq K_0$ and $K_0 > 0$ is sufficiently large, then it 
follows from (\ref{one-more-bound}) that for every $\Upsilon \in \mathbb{R}$, we have 
$$
|\Lambda - \Lambda(\Xi) | \geq \frac{\gamma_0 K_0^2}{1 + \varepsilon^2 (\Xi^2 + \rho^2)}.
$$
On the other hand, if $|\Xi + i \rho| \leq K_0$ and $|\Upsilon| \geq 
K_0^2 \geq K_0 |\Xi + i \rho|$, then it follows from (\ref{Lambda-expression}) 
that 
$$
|\Lambda - \Lambda(\Xi)| \geq 
\frac{\rho \Upsilon^2 (1 + 3 \varepsilon^2 \Xi^2 - \varepsilon^2 \rho^2)}{(\Xi^2 + \rho^2) [1 + \varepsilon^2 (\Xi^2 - \rho^2)]^2} \geq \frac{\rho K_0^2}{1 + \varepsilon^2 (\Xi^2 + \rho^2)}.
$$
Then, similarly to (\ref{estimate-tech}), we obtain 
\begin{equation*}
\frac{|i\Xi - \rho|}{
	|1 - \varepsilon^2 (i\Xi - \rho)^2| |\Lambda - \Lambda(\Xi)|} \leq \frac{C}{\sqrt{1 + \varepsilon^2 (\Xi^2 + \rho^2)} \sqrt{|\Lambda - \Lambda(\Xi)|}} \leq C K_0^{-1} ({\rm Re}(\Lambda) + \beta_0)^{-1/2},
\end{equation*}
for some generic constant $C > 0$ uniformly in $\Xi \in \R$. 
This justifies the bound (\ref{estimate-3-orig}).
\end{proof}

The resolvent equation in the original variables is obtained from the spectral problem (\ref{spectral-prob}) with $\gamma = \varepsilon^2$ in the form:
\begin{equation}
\label{resolvent-eq-orig}
\left (\lambda - A_0 - A_1 - A_2 \right ) u 
= f, \qquad f \in L_{\nu}^2, 
\end{equation}
where 
\begin{align*}
A_0 & :=  \partial_{x} (1-\partial_{x}^2)^{-1}( \varepsilon^2 - (2k+\varepsilon^2) \partial_x^2 + \eta^2 \partial_{x}^{-2}), \\
A_1 & := \partial_{x} (1-\partial_{x}^2)^{-1}  \partial_x \psi \partial_x, \\
A_2 & := \partial_{x} (1-\partial_{x}^2)^{-1}(-3 \psi + \psi'').
\end{align*}
Using this notation we obtain the following corollary of Proposition \ref{prop-estimate-new} which gives the bounds in original variables.
\begin{corollary}
For every $\lambda \in \mathbb{C}$ satisfying ${\rm Re}(\lambda) > -\frac{1}{2} \beta_0  \varepsilon^3$ with some $\beta_0 > 0$ and every $\eta \in \mathbb{R}$ satisfying $|\eta| \geq K_0^2 \varepsilon^2$ with sufficiently large $K_0 > 0$ we find that 
\begin{equation}
\label{estimate-1-orig}
\| (\lambda - A_0)^{-1} \|_{L^2_{\varepsilon \rho} \to L^2_{\varepsilon \rho}} \leq C \varepsilon^{-3},  
\end{equation}
\begin{equation}
\label{estimate-2-orig}
\| \partial_x (1 - \partial_x^2)^{-1} (\lambda - A_0)^{-1} \|_{L^2_{\varepsilon \rho} \to L^2_{\varepsilon \rho}} \leq C \varepsilon^{-2},  
\end{equation}
and 
\begin{equation}
\label{estimate-3-orig}
\| \partial_x (1- \partial_x^2)^{-1} (\lambda - A_0)^{-1} \|_{L^2_{\varepsilon \rho} \to L^2_{\varepsilon \rho}} \leq C K_0^{-1} \varepsilon^{-2}. 
\end{equation}
\end{corollary}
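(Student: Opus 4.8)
The plan is to derive all three bounds from Propositions \ref{prop-estimate} and \ref{prop-estimate-new} by the change of variables (\ref{scaling-KP-lin}) that turns the resolvent of $A_0$ in the original variable $x$ into the rescaled resolvents already estimated there. Under (\ref{scaling-KP-lin}) one has $x = \varepsilon^{-1} X$, hence $\partial_x = \varepsilon \partial_X$, $(1-\partial_x^2)^{-1} = (1-\varepsilon^2 \partial_X^2)^{-1}$, and $\partial_x^{-2} = \varepsilon^{-2}\partial_X^{-2}$. First I would substitute $\gamma = \varepsilon^2$ and $\eta = \varepsilon^2 \Upsilon$ into the definition of $A_0$ and collect powers of $\varepsilon$: the nonlocal term becomes $\eta^2 \partial_x^{-2} = \varepsilon^2 \Upsilon^2 \partial_X^{-2}$, which combines with $\varepsilon^2 - (2k+\varepsilon^2)\partial_x^2 = \varepsilon^2[1 - (2k+\varepsilon^2)\partial_X^2]$, while the leading $\partial_x$ supplies one more factor of $\varepsilon$. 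This yields the identity
\[
A_0 = \varepsilon^3\, \partial_X (1-\varepsilon^2 \partial_X^2)^{-1} \bigl[ 1 - (2k+\varepsilon^2)\partial_X^2 + \Upsilon^2 \partial_X^{-2} \bigr] =: \varepsilon^3 \mathcal{L}_\varepsilon,
\]
so that $\lambda - A_0 = \varepsilon^3(\Lambda - \mathcal{L}_\varepsilon)$ with $\mathcal{L}_\varepsilon$ exactly the constant-coefficient operator inside the resolvent in (\ref{estimate-1}).

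Next I would match the weighted spaces. With $\nu = \varepsilon\rho$ the substitution $x = \varepsilon^{-1} X$ gives $e^{\nu x} = e^{\rho X}$, so the map $(Uf)(X) := f(\varepsilon^{-1} X)$ satisfies $\|Uf\|_{L^2_\rho} = \varepsilon^{1/2}\|f\|_{L^2_{\varepsilon\rho}}$; thus $\varepsilon^{-1/2}U$ is unitary from $L^2_{\varepsilon\rho}$ onto $L^2_\rho$ and conjugation by $U$ preserves every operator norm. Since $U(\lambda - A_0)^{-1} U^{-1} = \varepsilon^{-3}(\Lambda - \mathcal{L}_\varepsilon)^{-1}$ acting in $L^2_\rho$, this gives
\[
\|(\lambda - A_0)^{-1}\|_{L^2_{\varepsilon\rho}\to L^2_{\varepsilon\rho}} = \varepsilon^{-3}\,\|(\Lambda-\mathcal{L}_\varepsilon)^{-1}\|_{L^2_\rho\to L^2_\rho},
\]
and likewise the prefactor $\partial_x(1-\partial_x^2)^{-1} = \varepsilon\,\partial_X(1-\varepsilon^2\partial_X^2)^{-1}$ contributes one extra power of $\varepsilon$, so that the operators in (\ref{estimate-2-orig}) and (\ref{estimate-3-orig}) equal $\varepsilon^{-2}$ times the rescaled operators controlled by (\ref{estimate-2}) and (\ref{estimate-3}).

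Finally I would verify that the spectral parameters fall in the admissible region. From $\lambda = \varepsilon^3\Lambda$ one has $\mathrm{Re}(\Lambda) = \varepsilon^{-3}\mathrm{Re}(\lambda)$, so the hypothesis $\mathrm{Re}(\lambda) > -\tfrac12\beta_0\varepsilon^3$ is equivalent to $\mathrm{Re}(\Lambda) > -\tfrac12\beta_0$, which lies inside the half-planes required by both propositions, and $|\eta| \geq K_0^2\varepsilon^2$ is equivalent to $|\Upsilon| \geq K_0^2$. On this region $(\mathrm{Re}(\Lambda)+\beta_0)^{-1}$ and $(\mathrm{Re}(\Lambda)+\beta_0)^{-1/2}$ are bounded by constants, so (\ref{estimate-1}) gives (\ref{estimate-1-orig}), (\ref{estimate-2}) gives (\ref{estimate-2-orig}), and the sharper bound (\ref{estimate-3}), valid for $|\Upsilon|\geq K_0^2$, produces the gain $C K_0^{-1}\varepsilon^{-2}$ in (\ref{estimate-3-orig}).

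The argument is essentially bookkeeping, so \emph{the main thing to get right} is the consistent scaling of all four operator factors — $\partial_x$, $(1-\partial_x^2)^{-1}$, $\partial_x^{-2}$, and the weight $e^{\nu x}$ — together with the verification that the powers of $\varepsilon$ assemble into exactly $\varepsilon^{-3}$ for the bare resolvent and $\varepsilon^{-2}$ for the resolvent premultiplied by $\partial_x(1-\partial_x^2)^{-1}$. I expect no conceptual difficulty beyond checking that the weight transformation is genuinely norm-preserving and that $\mathrm{Re}(\Lambda) > -\tfrac12\beta_0$ is the common sub-region on which all three cited estimates are simultaneously valid.
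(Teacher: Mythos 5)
Your proposal is correct and is exactly the argument the paper intends: the corollary is stated as an immediate consequence of Propositions \ref{prop-estimate} and \ref{prop-estimate-new} via the scaling (\ref{scaling-KP-lin}), and your bookkeeping (the identity $A_0 = \varepsilon^3 \mathcal{L}_\varepsilon$, the weight matching $e^{\nu x} = e^{\rho X}$ under $\nu = \varepsilon\rho$, and the equivalence of the spectral-parameter regions) fills in precisely the omitted details with the right powers of $\varepsilon$.
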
  

\begin{remark}
	\label{remark-high}
Since the continuous spectrum of $\varepsilon^{-3} A_0$ in $L^2_{\rho}$ 
is bounded away from $i \R$ by the $\varepsilon$-independent constant $\beta_0$, 
and $\varepsilon^{-3} A_1$ is a relatively bounded perturbation to $\varepsilon^{-3} A_0$ of  order 
$\mathcal{O}(\varepsilon^2)$ due to the scaling (\ref{decomp-kdv}), the estimates (\ref{estimate-1-orig}), (\ref{estimate-2-orig}), and (\ref{estimate-3-orig}) apply also for 
$(\lambda - A_0 - A_1)^{-1}$ instead of $(\lambda - A_0)^{-1}$. Hence, we will use 
\begin{equation}
\label{estimate-1-new}
\| (\lambda - A_0 - A_1)^{-1} \|_{L^2_{\varepsilon \rho} \to L^2_{\varepsilon \rho}} \leq C \varepsilon^{-3},  
\end{equation}
\begin{equation}
\label{estimate-2-new}
\| \partial_x (1 - \partial_x^2)^{-1} (\lambda - A_0 - A_1)^{-1} \|_{L^2_{\varepsilon \rho} \to L^2_{\varepsilon \rho}} \leq C \varepsilon^{-2},  
\end{equation}
and 
\begin{equation}
\label{estimate-3-new}
\| \partial_x (1- \partial_x^2)^{-1} (\lambda - A_0 - A_1)^{-1} \|_{L^2_{\varepsilon \rho} \to L^2_{\varepsilon \rho}} \leq C K_0^{-1} \varepsilon^{-2}. 
\end{equation}
instead of (\ref{estimate-1-orig}), (\ref{estimate-2-orig}), and (\ref{estimate-3-orig}).
\end{remark}

The following lemma uses the fact that the operator $A_2$ in (\ref{resolvent-eq-orig}) is small compared to the operator $A_0+A_1$ in $L^2_{\varepsilon \rho}$ due to the KP-II scaling (\ref{scaling-KP-lin}) and (\ref{decomp-kdv}), see the estimate \eqref{A2small} below. As a result, we obtain the following resolvent estimate in the high-frequency region.

\begin{lemma}
	\label{lemma-high}
For every $\rho \in (0,\rho_0)$ there exists $\varepsilon_0 > 0$, $\beta_0 > 0$, and $K_0 > 0$ such that for every $\varepsilon \in (0,\varepsilon_0)$, $\eta \in \mathbb{R}$ satisfying $|\eta| \geq K_0^2 \varepsilon^2$,  and $\lambda \in \mathbb{C}$ satisfying ${\rm Re}(\lambda) > -\beta_0  \varepsilon^3$, there exists a unique solution $u \in {\rm Dom}(A_0) \subset L^2_{\varepsilon \rho}$ to the resolvent equation 
	(\ref{resolvent-eq-orig}) with $f \in L^2_{\varepsilon \rho}$ such that 
\begin{equation}
\label{bound-high}
	\| u \|_{L^2_{\varepsilon \rho}} \leq C \varepsilon^{-3} \| f \|_{L^2_{\varepsilon \rho}},
\end{equation}
	for some $C > 0$ independently of $f \in L^2_{\varepsilon \rho}$ and $\varepsilon$. 
\end{lemma}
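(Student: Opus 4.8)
The plan is to treat $A_2$ as a perturbation of $A_0 + A_1$, for which the resolvent estimates (\ref{estimate-1-new}) and (\ref{estimate-3-new}) are available in the high-frequency region $|\eta| \geq K_0^2 \varepsilon^2$. Writing $R := (\lambda - A_0 - A_1)^{-1}$ and using the factorization
$$
\lambda - A_0 - A_1 - A_2 = (\lambda - A_0 - A_1)\left(I - R A_2\right),
$$
the resolvent equation (\ref{resolvent-eq-orig}) has a unique solution $u = (I - R A_2)^{-1} R f \in {\rm Dom}(A_0)$ as soon as $\|R A_2\|_{L^2_{\varepsilon\rho} \to L^2_{\varepsilon\rho}} < 1$, and then $\|u\|_{L^2_{\varepsilon\rho}} \leq 2 \|R f\|_{L^2_{\varepsilon\rho}} \leq C\varepsilon^{-3}\|f\|_{L^2_{\varepsilon\rho}}$ by (\ref{estimate-1-new}), which is precisely the bound (\ref{bound-high}). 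Everything therefore reduces to establishing the smallness of $A_2$ as a perturbation.

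The key point is that $A_2 = \partial_x(1-\partial_x^2)^{-1}(-3\psi + \psi'')$ factors as the smoothing operator $\partial_x(1-\partial_x^2)^{-1}$ composed with multiplication by $-3\psi + \psi''$, and that by Lemma \ref{lem-approx} and the scaling (\ref{decomp-kdv}) one has $\|-3\psi + \psi''\|_{L^\infty} \leq C\varepsilon^2$. Since
$$
R A_2 = \left[ R\, \partial_x(1-\partial_x^2)^{-1}\right] (-3\psi + \psi''),
$$
the smoothing factor of $A_2$ sits adjacent to the resolvent. Exploiting the half-power gain of the smoothing estimate, which improves the naive bound $\|R\| \leq C\varepsilon^{-3}$ to $\|R\, \partial_x(1-\partial_x^2)^{-1}\| \leq C K_0^{-1}\varepsilon^{-2}$ in the high-frequency region, and multiplying by the $\mathcal{O}(\varepsilon^2)$ bound on the potential, I would obtain
\begin{equation}
\label{A2small}
\| R A_2 \|_{L^2_{\varepsilon \rho} \to L^2_{\varepsilon \rho}} \leq C K_0^{-1}\varepsilon^{-2}\cdot C\varepsilon^2 = C K_0^{-1},
\end{equation}
which is strictly less than $1$ once $K_0$ is chosen sufficiently large. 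It is essential here to pair the smoothing factor with the resolvent: the crude estimate $\|A_2\| \leq C\varepsilon^2$ together with $\|R\| \leq C\varepsilon^{-3}$ would only give $\|R A_2\| \leq C\varepsilon^{-1}$, which is useless.

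The main obstacle is exactly this pairing. The estimate (\ref{estimate-3-new}) bounds $\partial_x(1-\partial_x^2)^{-1} R$, with the smoothing operator applied \emph{after} the resolvent, whereas the factorization of $R A_2$ requires a bound on $R\, \partial_x(1-\partial_x^2)^{-1}$, with the smoothing operator applied \emph{before} the resolvent. For the constant-coefficient part $R_0 := (\lambda - A_0)^{-1}$ the two orderings coincide, since $A_0$ and $\partial_x(1-\partial_x^2)^{-1}$ are Fourier multipliers and hence commute, so that $R_0\, \partial_x(1-\partial_x^2)^{-1} = \partial_x(1-\partial_x^2)^{-1} R_0$ and (\ref{estimate-3-orig}) applies verbatim. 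To transfer this to $R = (\lambda - A_0 - A_1)^{-1}$ I would either argue by duality, using that $A_0$ is skew-adjoint and $-3\psi + \psi''$ is real so that the adjoint resolvent in the oppositely weighted space $L^2_{-\varepsilon\rho}$ obeys the same smoothing estimate as in Proposition \ref{prop-estimate-new}, or expand $R = R_0 + R A_1 R_0$ and control the correction using the relative boundedness of $A_1$ with respect to $A_0$ of order $\varepsilon^2$ recorded in Remark \ref{remark-high}. Checking that the constants remain uniform over the admissible region $\{{\rm Re}(\lambda) > -\beta_0\varepsilon^3\}$, and that incorporating $A_1$ does not spoil the half-power gain, is where the real work lies; the high-frequency restriction $|\eta| \geq K_0^2\varepsilon^2$ enters precisely to supply the decisive factor $K_0^{-1}$ in \eqref{A2small} that makes $R A_2$ a contraction and yields existence, uniqueness, and the bound (\ref{bound-high}).
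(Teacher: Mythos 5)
Your proposal follows essentially the same route as the paper's proof: the resolvent identity $(\lambda - A_0 - A_1 - A_2)^{-1} = [I - (\lambda-A_0-A_1)^{-1}A_2]^{-1}(\lambda-A_0-A_1)^{-1}$, the $\mathcal{O}(\varepsilon^2)$ bound on multiplication by $-3\psi+\psi''$ from the scaling (\ref{decomp-kdv}), and the $C K_0^{-1}\varepsilon^{-2}$ smoothing estimate paired against it to render $\|(\lambda-A_0-A_1)^{-1}A_2\| \leq C K_0^{-1} < 1$ for large $K_0$, after which (\ref{estimate-1-new}) yields (\ref{bound-high}). The operator-ordering subtlety you flag (smoothing applied before versus after the resolvent) is real but is passed over silently in the paper, which simply invokes (\ref{estimate-3-new}) and (\ref{estimate-4}); your resolution via commutation of Fourier multipliers for the constant-coefficient part and the relative boundedness of $A_1$ recorded in Remark \ref{remark-high} is precisely the justification implicit there.
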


\begin{proof}
	We use the resolvent identity 
$$
(\lambda - A_0 - A_1 - A_2)^{-1} = [I - (\lambda - A_0 - A_1)^{-1} A_2]^{-1} (\lambda - A_0 - A_1)^{-1}.
$$
It follows from the bound (\ref{estimate-1-new}) that we only need to show that the operator 
$$
I - (\lambda - A_0-A_1)^{-1} A_2
$$ 
is invertible with a bounded inverse in $L^2_{\varepsilon \rho}$,  which is true if 
$\| (\lambda - A_0 - A_1)^{-1} A_2 \|_{L^2_{\varepsilon \rho} \to L^2_{\varepsilon \rho}}$ is small. Since the decomposition (\ref{decomp-kdv}) implies that 
\begin{equation}
\label{estimate-4}
\| (-3\psi + \psi'') f \|_{L^2_{\varepsilon \rho}} \leq C \varepsilon^2 \| f \|_{L^2_{\varepsilon \rho}},
\end{equation}
it follows from the bound (\ref{estimate-2-new}) that the smallness of 
$\| (\lambda - A_0-A_1)^{-1} A_2 \|_{L^2_{\varepsilon \rho} \to L^2_{\varepsilon \rho}}$ cannot be deduced from smallness of $\varepsilon$. Nevertheless, 
if we use the estimates (\ref{estimate-3-new}) and (\ref{estimate-4}), then we obtain 
\begin{align}
\label{A2small}
\| (\lambda - A_0 - A_1)^{-1} A_2 \|_{L^2_{\varepsilon \rho} \to L^2_{\varepsilon \rho}} \leq C_0 K_0^{-1}
\end{align}
for some $C_0 > 0$. If $K_0 > 0$ is sufficiently large, the norm is small and 
the operator $I - (\lambda - A_0-A_1)^{-1} A_2$ is invertible with a bounded inverse in $L^2_{\varepsilon \rho}$. The bound (\ref{bound-high}) follows from 
(\ref{estimate-1-new}).
\end{proof}

\subsection{The low-frequency region}

We first consider the two eigenvalues $\lambda_{\pm}(\eta)$ of the spectral problem (\ref{spectral-prob}) in $L^2_{\nu}$ for small $\eta\neq 0$, see Lemma \ref{lemma-splitting}. By Remark \ref{rem-resonances}, the expansion of $\varepsilon^{-3} \lambda_{\pm}(\varepsilon^2 \Upsilon)$ in $\Upsilon$ 
agrees with the exact expression (\ref{resonance-KP}) known for the KP-II equation (\ref{KP-II}). The following lemma states that the same correspondence holds 
for every $\Upsilon$ if $\varepsilon$ is sufficiently small. 

\begin{lemma}
	\label{lem-persistence-resonance}
	Let $\Lambda_{\pm}(\Upsilon)$ be given by (\ref{resonance-KP}) for every $\Upsilon \in \R$. For every $\rho \in (0,\rho_0)$, there exists $\varepsilon_0 > 0$ 
	and $C_0 > 0$ such that for every $\varepsilon \in (0,\varepsilon_0)$ the spectral problem (\ref{equiv-1}) admits eigenvalues $\varepsilon^{-3} \lambda_{\pm}(\varepsilon^2 \Upsilon)$ in $L^2_{\rho}$ such that 
	$$
	| \varepsilon^{-3} \lambda_{\pm}(\varepsilon^2 \Upsilon) - \Lambda_{\pm}(\Upsilon)| \leq C_0 \varepsilon^2.
	$$
\end{lemma}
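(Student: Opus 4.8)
The plan is to regard the scaled spectral problem (\ref{equiv-1}) as an $\varepsilon^2$–perturbation of the linearized KP-II eigenvalue problem recovered at $\varepsilon = 0$, and to continue the isolated eigenvalues $\Lambda_\pm(\Upsilon)$ by analytic perturbation theory. Setting $\varepsilon = 0$ in (\ref{equiv-1}) and using $\Psi = \Psi_{\rm KdV} + \varepsilon^2\tilde\Psi$ from (\ref{decomp-kdv}), the operator collapses to $\mathcal{L}_0 := \partial_X(L_{\rm KdV} + \Upsilon^2 \partial_X^{-2})$, which is precisely the linearization of the KP-II equation (\ref{KP-II}) acting in $L^2_\rho$. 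By \cite{M}, and consistently with the Puiseux expansion of Remark \ref{rem-resonances}, the numbers $\Lambda_\pm(\Upsilon)$ in (\ref{resonance-KP}) are simple eigenvalues of $\mathcal{L}_0$ with smooth, exponentially localized eigenfunctions $V_\pm^{(0)} \in L^2_\rho$; moreover the continuous spectrum of both $\mathcal{L}_0$ and $\mathcal{L}_\varepsilon$ lies to the left of $-\beta_0$ uniformly in $\Upsilon$ and $\varepsilon$ by the symbol bound (\ref{Lambda-expression}) of Proposition \ref{prop-estimate}, so for $\Upsilon$ in a compact set the eigenvalues $\Lambda_\pm(\Upsilon)$ are isolated with a spectral gap bounded below.

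First I would record the perturbation in closed form. Using $(1-\varepsilon^2\partial_X^2)^{-1} - I = \varepsilon^2 (1-\varepsilon^2\partial_X^2)^{-1}\partial_X^2$, the full operator in (\ref{equiv-1}) is $\mathcal{L}_\varepsilon = \mathcal{L}_0 + \varepsilon^2 \mathcal{N}_\varepsilon$ with
$$
\mathcal{N}_\varepsilon := \partial_X (1-\varepsilon^2\partial_X^2)^{-1}\left( \partial_X^2 L_{\rm KdV} + \Upsilon^2 + L_{\rm pert} \right).
$$
Then I would enclose each simple eigenvalue $\Lambda_\pm(\Upsilon)$ (for $\Upsilon$ in a compact set bounded away from $0$) by a circle $\Gamma_\pm$ of radius smaller than the spectral gap and form the Riesz projection $P_\varepsilon := \frac{1}{2\pi i}\oint_{\Gamma_\pm}(\mu - \mathcal{L}_\varepsilon)^{-1}\,d\mu$. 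Factoring $\mu - \mathcal{L}_\varepsilon = (\mu - \mathcal{L}_0)\bigl[ I - \varepsilon^2 (\mu - \mathcal{L}_0)^{-1}\mathcal{N}_\varepsilon \bigr]$, the projection is well defined and analytic in $\varepsilon^2$ as soon as $\|\varepsilon^2 (\mu - \mathcal{L}_0)^{-1}\mathcal{N}_\varepsilon\| < 1$ on $\Gamma_\pm$; then $\dim {\rm Ran}(P_\varepsilon) = \dim {\rm Ran}(P_0) = 1$ locates exactly one eigenvalue of $\mathcal{L}_\varepsilon$ inside $\Gamma_\pm$. Its position I would extract either from ${\rm tr}(\mathcal{L}_\varepsilon P_\varepsilon)$ or, equivalently, by the Lyapunov–Schmidt reduction $\hat V = V_\pm^{(0)} + W$ whose solvability condition reads $(\Lambda - \Lambda_\pm(\Upsilon))\langle V_\pm^{(0),*}, V_\pm^{(0)}\rangle = \varepsilon^2 \langle V_\pm^{(0),*}, \mathcal{N}_\varepsilon (V_\pm^{(0)} + W)\rangle$, giving $\Lambda = \Lambda_\pm(\Upsilon) + \mathcal{O}(\varepsilon^2)$. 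For $\Upsilon$ near $0$ the two eigenvalues merge at the origin and simplicity fails, but there the stated bound is already furnished by the expansion of Remark \ref{rem-resonances}, which matches (\ref{resonance-KP}) to this order; the two $\Upsilon$–ranges overlap.

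The hard part will be that $\mathcal{N}_\varepsilon$ is not bounded on $L^2_\rho$ uniformly in $\varepsilon$: the factor $\partial_X(1-\varepsilon^2\partial_X^2)^{-1}$ has norm of order $\varepsilon^{-1}$ and the bracket carries derivatives up to fourth order, so $\varepsilon^2 \mathcal{N}_\varepsilon$ is only relatively bounded and is genuinely $\mathcal{O}(1)$ at frequencies $|\Xi| \sim \varepsilon^{-1}$. The key is therefore to prove that $\varepsilon^2 (\mu - \mathcal{L}_0)^{-1}\mathcal{N}_\varepsilon$ (and the adjoint version needed for the pairing) is $\mathcal{O}(\varepsilon^2)$ on $\Gamma_\pm$. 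I would obtain this from the cubic smoothing of $(\mu - \mathcal{L}_0)^{-1}$ together with the localization of $V_\pm^{(0)}$: on smooth, exponentially decaying functions the operator $\partial_X(1-\varepsilon^2\partial_X^2)^{-1}$ reduces to $\partial_X + \mathcal{O}(\varepsilon^2)$, so the dangerous $\varepsilon^{-1}$ growth is never activated and every term of $\mathcal{N}_\varepsilon$ contributes at order one. The complementary high-frequency range, where this gain is insufficient, is exactly the region already controlled by the resolvent bounds of Proposition \ref{prop-estimate} and Lemma \ref{lemma-high}, and for large $\Upsilon$ the same bounds show there is no near-critical spectrum to track; combining the two closes the Neumann series uniformly. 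Since $\|\tilde\Psi\|_{L^\infty} \leq C_0$ by Lemma \ref{lem-approx}, the coefficients of $L_{\rm pert}$ are bounded independently of $\varepsilon$, so all constants are uniform and the bound $|\varepsilon^{-3}\lambda_\pm(\varepsilon^2\Upsilon) - \Lambda_\pm(\Upsilon)| \leq C_0 \varepsilon^2$ follows.
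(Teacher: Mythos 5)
Your overall strategy---continuing the KP-II resonances $\Lambda_{\pm}(\Upsilon)$ of the truncated operator $\mathcal{L}_0 := \partial_X(L_{\rm KdV}+\Upsilon^2\partial_X^{-2})$ under an $\varepsilon^2$-perturbation---is the same as the paper's, but your implementation breaks down at precisely the step you flag as ``the hard part,'' and the repair you sketch cannot work. The factorization $\mu-\mathcal{L}_\varepsilon=(\mu-\mathcal{L}_0)\bigl[I-\varepsilon^2(\mu-\mathcal{L}_0)^{-1}\mathcal{N}_\varepsilon\bigr]$ needs $\|\varepsilon^2(\mu-\mathcal{L}_0)^{-1}\mathcal{N}_\varepsilon\|_{L^2_\rho\to L^2_\rho}<1$, and this hypothesis is \emph{never} satisfied, no matter how small $\varepsilon$ is. Indeed, the constant-coefficient symbol of $\mathcal{L}_0$ grows like $2ki\Xi^3$, while that of the full operator in (\ref{equiv-1}) grows only like $2ki\Xi^3/(1+\varepsilon^2\Xi^2)$; hence the symbol of $\varepsilon^2(\mu-\mathcal{L}_0)^{-1}\mathcal{N}_\varepsilon$ behaves like $\frac{-2ki\Xi^3}{\mu-2ki\Xi^3}\cdot\frac{\varepsilon^2\Xi^2}{1+\varepsilon^2\Xi^2}$ and tends to $1$ in modulus as $|\Xi|\to\infty$, so the operator norm is at least $1$. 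Worse, $I-\varepsilon^2(\mu-\mathcal{L}_0)^{-1}\mathcal{N}_\varepsilon=(\mu-\mathcal{L}_0)^{-1}(\mu-\mathcal{L}_\varepsilon)$ has symbol decaying like $\varepsilon^{-2}\Xi^{-2}$ at infinity, so the inverse you need is a genuinely \emph{unbounded} operator: no Neumann series, however cleverly resummed, can produce it. Your proposed fix confuses the two frequency variables: Proposition \ref{prop-estimate} and Lemma \ref{lemma-high} control the regime of large \emph{transverse} frequency ($|\Upsilon|\geq K_0^2$, i.e.\ $|\eta|\geq K_0^2\varepsilon^2$) for the resolvents of the constant-coefficient and full operators, and say nothing about the large-$\Xi$ (longitudinal) behavior of the composition $(\mu-\mathcal{L}_0)^{-1}\mathcal{N}_\varepsilon$, which is where smallness fails. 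Frequency localization of $V^{(0)}_{\pm}$ does bound $\mathcal{N}_\varepsilon V^{(0)}_{\pm}$, but the Lyapunov--Schmidt reduction also requires a uniformly invertible operator on the complementary subspace, and that is exactly what your factorization does not deliver.

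The paper removes the obstruction with one algebraic move that is missing from your proposal: since (by bootstrapping) any eigenfunction of (\ref{equiv-1}) in $L^2_\rho$ lies in the domain of $\partial_X(L_{\rm KdV}+\Upsilon^2\partial_X^{-2})$, one may multiply the eigenvalue equation by $(1-\varepsilon^2\partial_X^2)$ and work instead with the generalized eigenvalue problem (\ref{equiv-2}),
\begin{equation*}
\partial_X\left(L_{\rm KdV}+\varepsilon^2 L_{\rm pert}+\Upsilon^2\partial_X^{-2}\right)\hat V=\Lambda\,(1-\varepsilon^2\partial_X^2)\hat V .
\end{equation*}
In this formulation the regularizing factor $(1-\varepsilon^2\partial_X^2)^{-1}$---the sole source of the $\mathcal{O}(1)$ high-frequency discrepancy between $\mathcal{L}_\varepsilon$ and $\mathcal{L}_0$---has disappeared. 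The $\varepsilon$-dependent terms are now $\varepsilon^2\partial_X L_{\rm pert}$ (third order, hence relatively bounded with respect to the third-order operator $\mathcal{L}_0$ with relative bound $\mathcal{O}(\varepsilon^2)$) and $\varepsilon^2\Lambda\partial_X^2$ (second order, also $\mathcal{O}(\varepsilon^2)$-small for $\Lambda$ in a bounded set). With genuinely small perturbations, the decomposition $\Lambda=\Lambda_{\pm}(\Upsilon)+\varepsilon^2\tilde\Lambda$, $\hat V=U_{\pm}+\varepsilon\tilde U$, where $(\Lambda_{\pm}(\Upsilon),U_{\pm})$ solves the truncated problem (\ref{equiv-3}), together with a standard Lyapunov--Schmidt reduction (solvability enforced by orthogonality to the adjoint eigenfunction, following Lemma 3.4 and Corollary 3.5 of Mizumachi--Shimabukuro cited in the paper), yields $|\tilde\Lambda|\leq C_0$ and hence the claimed bound $|\varepsilon^{-3}\lambda_{\pm}(\varepsilon^2\Upsilon)-\Lambda_{\pm}(\Upsilon)|\leq C_0\varepsilon^2$. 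If you recast your Riesz-projection scheme on (\ref{equiv-2}) rather than on (\ref{equiv-1}), treating it as an operator pencil, your argument becomes viable.
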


\begin{proof}
By bootstrapping arguments, an eigenfunction $\hat{V}$ of the spectral problem (\ref{equiv-1}) in $L^2_{\rho}$ satisfies that 
$$
\hat{V} \in {\rm Dom}(\partial_X (1-\varepsilon^2 \partial_X^2)^{-1} (L_{\rm KdV} + \Upsilon^2 \partial_X^{-2})) \subset L^2_{\rho}
$$
if and only if 
$$
\hat{V} \in {\rm Dom}(\partial_X (L_{\rm KdV} + \Upsilon^2 \partial_X^{-2})) \subset L^2_{\rho}.
$$ 
Hence we can rewrite the spectral problem (\ref{equiv-1}) for the eigenfunction $\hat{V}$ in $L^2_{\rho}$ in the equivalent form 
\begin{equation}
\label{equiv-2}
\partial_{X}  \left( L_{\rm KdV} + \varepsilon^2 L_{\rm pert} + \Upsilon^2 \partial_{X}^{-2} \right) \hat{V} = \Lambda (1-\varepsilon^2\partial_{X}^2) \hat{V}.
\end{equation}
Since $(\Lambda_{\pm}(\Upsilon),U_{\pm}) \in \mathbb{C} \times L^2_{\rho}$ 
are solutions of the truncated problem 
\begin{equation}
\label{equiv-3}
\partial_{X}  \left( L_{\rm KdV} + \Upsilon^2 \partial_{X}^{-2} \right) U_{\pm} = \Lambda_{\pm}(\Upsilon) U_{\pm},
\end{equation}
we can write the decomposition $\Lambda = \Lambda_{\pm}(\Upsilon) + \varepsilon^2 \tilde{\Lambda}$, $\hat{V} = U_{\pm} + \varepsilon \tilde{U}$ and obtain the perturbed problem for $(\tilde{\Lambda},\tilde{U})$ given by
\begin{align*}
& \partial_{X}  \left( L_{\rm KdV} + \varepsilon^2 L_{\rm pert} + \Upsilon^2 \partial_{X}^{-2} \right) \tilde{U} - (\Lambda_{\pm}(\Upsilon) + \varepsilon^2 \tilde{\Lambda}) (1-\varepsilon^2\partial_{X}^2) \tilde{U} \\
& \quad = -L_{\rm pert} U_{\pm} - \Lambda_{\pm} \partial_X^2 U_{\pm} + \tilde{\Lambda} (1-\varepsilon^2 \partial_X^2) U_{\pm}.
\end{align*}
This equation is routinely solved by using the method of Lyapunov--Schmidt reduction 
with $\tilde{\Lambda}$ being uniquely defined from the condition that 
$\tilde{U} \in {\rm Dom}(\partial_X (L_{\rm KdV} + \Upsilon^2 \partial_X^{-2})) \subset L^2_{\rho}$ satisfy the orthogonality condition 
to the adjoint eigenfunction for the eigenvalue $\Lambda_{\pm}(\Upsilon)$. 
See Lemma 3.4 and Corollary 3.5 in \cite{MS1} for details.
\end{proof}
 
The resolvent equation in the scaled variables is obtained   from the spectral stability problem (\ref{equiv-1}) in the form
 \begin{equation}
 \label{resolvent-eq}
 \left (\Lambda - \partial_{X} (1-\varepsilon^2\partial_{X}^2)^{-1} \left( L_{\rm KdV} + \varepsilon^2 L_{\rm pert} + \Upsilon^2 \partial_{X}^{-2}  \right)\right ) U 
 = F, \qquad F \in L_{\rho}^2.
 \end{equation}
 The following lemma uses the smallness of $\varepsilon^2 L_{\rm pert}$ 
 and the formalism from \cite{MS1} in order to obtain the resolvent estimate 
 in the low-frequency region.
 
\begin{lemma}
	\label{lem-low-frequency}
For every $\rho \in (0,\rho_0)$ there exists $\varepsilon_0 > 0$, $\beta_0 > 0$ 
such that for every $\varepsilon \in (0,\varepsilon_0)$, $\Upsilon \in \mathbb{R}$ and $\Lambda \in \mathbb{C}$ satisfying ${\rm Re}(\Lambda) > -\beta_0$ and $\Lambda \neq \varepsilon^{-3} \lambda_{\pm}(\varepsilon^2 \Upsilon)$, there exists a unique solution 
 $$
 U \in {\rm Dom}(\partial_X (1-\varepsilon^2 \partial_X^2)^{-1} (L_{\rm KdV} + \Upsilon^2 \partial_X^{-2})) \subset L^2_{\rho}
 $$ 
of the resolvent equation (\ref{resolvent-eq}) for every $F \in L^2_{\rho}$ satisfying
\begin{equation}
\label{resolvent-estimate-last}
 \| U \|_{L^2_{\rho}} \leq C \| F \|_{L^2_{\rho}}
\end{equation} 
for $C > 0$.
\end{lemma}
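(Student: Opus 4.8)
The plan is to treat the resolvent equation \eqref{resolvent-eq} as an $\mathcal{O}(\varepsilon^2)$ perturbation of the linearized KP-II problem and to invoke the resolvent machinery of \cite{MS1}. Let $\mathcal{B}_{\rm KP} := \partial_X(L_{\rm KdV} + \Upsilon^2 \partial_X^{-2})$ be the reference operator appearing in \eqref{equiv-3}. The full operator in \eqref{resolvent-eq}, namely $\mathcal{B} := \partial_X(1-\varepsilon^2\partial_X^2)^{-1}(L_{\rm KdV} + \varepsilon^2 L_{\rm pert} + \Upsilon^2 \partial_X^{-2})$, differs from $\mathcal{B}_{\rm KP}$ by the insertion of the factor $(1-\varepsilon^2\partial_X^2)^{-1} = I + \mathcal{O}(\varepsilon^2)$ and by the addition of the term carrying $\varepsilon^2 L_{\rm pert}$; by the decomposition \eqref{decomp-kdv} the coefficients of the second-order operator $L_{\rm pert}$ are bounded, so $\mathcal{B} - \mathcal{B}_{\rm KP}$ is a relatively bounded perturbation of order $\mathcal{O}(\varepsilon^2)$. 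First I would record the reference resolvent estimate for $\mathcal{B}_{\rm KP}$ in $L^2_\rho$: computing the real part of its constant-coefficient symbol as in Proposition \ref{prop-estimate} shows that the continuous spectrum lies in $\{\mathrm{Re}(\Lambda) \le -\beta_0\}$ for $\rho \in (0,\rho_0)$, while the only point spectrum with $\mathrm{Re}(\Lambda) > -\beta_0$ consists of the two simple eigenvalues $\Lambda_\pm(\Upsilon)$ with eigenfunctions $U_\pm$ from \eqref{equiv-3}, as established in \cite{MS1}.

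The difficulty is that $(\Lambda - \mathcal{B}_{\rm KP})^{-1}$ is not uniformly bounded as $\Lambda \to \Lambda_\pm(\Upsilon)$, so a direct Neumann series in $\mathcal{B} - \mathcal{B}_{\rm KP}$ is not available throughout the half-plane $\{\mathrm{Re}(\Lambda) > -\beta_0\}$. To circumvent this I would introduce the Riesz spectral projection $P$ of $\mathcal{B}_{\rm KP}$ onto the two-dimensional eigenspace $\mathrm{span}\{U_+, U_-\}$, together with the corresponding adjoint eigenfunctions in $L^2_{-\rho}$, and set $Q := I - P$, so that $L^2_\rho$ splits as $\mathrm{Ran}(P) \oplus \mathrm{Ran}(Q)$ with $\mathcal{B}_{\rm KP}$ reduced by this splitting. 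On $\mathrm{Ran}(Q)$ the resolvent $(\Lambda - \mathcal{B}_{\rm KP})^{-1}$ is uniformly bounded for $\mathrm{Re}(\Lambda) > -\beta_0$. Since $\mathcal{B} - \mathcal{B}_{\rm KP}$ is $\mathcal{O}(\varepsilon^2)$, a Neumann series then shows that for $\varepsilon$ small the operator $\mathcal{B}$ has a uniformly bounded resolvent on the complementary subspace and that its only spectrum with $\mathrm{Re}(\Lambda) > -\beta_0$ is a perturbed pair of eigenvalues lying within $\mathcal{O}(\varepsilon^2)$ of $\Lambda_\pm(\Upsilon)$, which by Lemma \ref{lem-persistence-resonance} are exactly $\varepsilon^{-3}\lambda_\pm(\varepsilon^2\Upsilon)$.

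The remaining finite-dimensional obstruction I would handle by a Lyapunov--Schmidt reduction: projecting \eqref{resolvent-eq} with $P$ and $Q$ yields a $2\times 2$ reduced equation on $\mathrm{Ran}(P)$ whose matrix is $\Lambda I$ minus the reduction of $\mathcal{B}$, with eigenvalues $\varepsilon^{-3}\lambda_\pm(\varepsilon^2\Upsilon)$. Because we assume $\Lambda \neq \varepsilon^{-3}\lambda_\pm(\varepsilon^2\Upsilon)$, this matrix is invertible, and back-substitution into the $Q$-part produces the unique $U$ in the stated domain together with the bound \eqref{resolvent-estimate-last}. I expect the main obstacle to be the uniformity of the constants in $\varepsilon$ (and in $\Upsilon$ on the bounded sets relevant to the low-frequency regime), precisely because the reference resolvent degenerates near $\Lambda_\pm(\Upsilon)$; controlling this degeneration is exactly what forces the separation of the eigenspace and the explicit eigenvalue tracking of Lemma \ref{lem-persistence-resonance}, following the formalism of Lemma 3.4 and Corollary 3.5 in \cite{MS1}.
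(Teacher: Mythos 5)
Your overall architecture --- the MS1 projected-resolvent bound, the eigenvalue proximity from Lemma \ref{lem-persistence-resonance}, a spectral projection off the pair $\Lambda_\pm(\Upsilon)$, a Neumann series for the perturbation, and a finite-dimensional reduction using the hypothesis $\Lambda \neq \varepsilon^{-3}\lambda_\pm(\varepsilon^2\Upsilon)$ --- matches the paper's proof in spirit. But there is a genuine gap in the step that carries all the weight: your claim that $\mathcal{B}-\mathcal{B}_{\rm KP}$ is relatively bounded with relative bound $\mathcal{O}(\varepsilon^2)$, so that a Neumann series on ${\rm Ran}(Q)$ converges for small $\varepsilon$. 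The factor $(1-\varepsilon^2\partial_X^2)^{-1}$ is \emph{not} $I+\mathcal{O}(\varepsilon^2)$ as an operator on $L^2_\rho$: the correction $\varepsilon^2\partial_X^2(1-\varepsilon^2\partial_X^2)^{-1}$ has symbol $\varepsilon^2\Xi^2/(1+\varepsilon^2\Xi^2)$, which is of order one at frequencies $|\Xi|\gtrsim \varepsilon^{-1}$. Concretely, at high frequencies $\mathcal{B}_{\rm KP}$ is a third-order operator (symbol $\sim -2ki\Xi^3$), while the full operator $\mathcal{B}$ behaves like a first-order operator with large coefficient (symbol $\sim i(2k\varepsilon^{-2}+1)\Xi$), so their difference is as large as $\mathcal{B}_{\rm KP}$ itself there; the symbol of $(\Lambda-\mathcal{B}_{\rm KP})^{-1}(\mathcal{B}-\mathcal{B}_{\rm KP})$ tends to $-1$ as $|\Xi|\to\infty$, uniformly in $\varepsilon$. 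The gain of three derivatives from $(\Lambda-\mathcal{B}_{\rm KP})^{-1}$ cannot absorb the five derivatives in the term $\varepsilon^2\partial_X^3(1-\varepsilon^2\partial_X^2)^{-1}L_{\rm KdV}$: evaluating at $|\Xi|=\varepsilon^{-1}$ gives a ratio of order one, not $\mathcal{O}(\varepsilon^2)$. Hence smallness of $\varepsilon$ does not make your Neumann series contract, and neither the location of the spectrum of $\mathcal{B}$ on the complementary subspace nor the uniform resolvent bound follows from your argument.

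The paper avoids exactly this trap by never comparing the full operator with $\partial_X(L_{\rm KdV}+\Upsilon^2\partial_X^{-2})$ through a norm-perturbation argument. It introduces the intermediate operator $\mathcal{M}$, which keeps the singular factor $(1-\varepsilon^2\partial_X^2)^{-1}$ \emph{and} the second-order part $-\varepsilon^2\partial_X(1-\Psi)\partial_X$ of $\varepsilon^2 L_{\rm pert}$ inside; it transfers the MS1 bound \eqref{resolvent-estimate-1} to the projected resolvent bound \eqref{resolvent-estimate-2} for $(\Lambda-\mathcal{M})^{-1}\mathcal{Q}$ using the proximity result of Lemma \ref{lem-persistence-resonance}, with the high-frequency behavior of $\mathcal{M}$ controlled by symbol estimates in the spirit of Proposition \ref{prop-estimate} rather than by perturbation; and only the remaining zeroth-order multiplication $\varepsilon^2(\Psi''-3\tilde{\Psi})$ is treated by a Neumann series. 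For that remainder the smallness is genuine: multiplication by $\Psi''-3\tilde{\Psi}$ is uniformly bounded and $\varepsilon^2\partial_X(1-\varepsilon^2\partial_X^2)^{-1}$ has operator norm $\mathcal{O}(\varepsilon)$, so the near-identity operator is invertible for small $\varepsilon$. To repair your proof you must make the same split: the mollifier and all second-order coefficients have to remain inside the reference operator whose projected resolvent is estimated directly, and only bounded zeroth-order pieces may be placed in the Neumann series.
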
 

\begin{proof}
Let $Q_{\rm KP}$ be the projection operator for the spectral problem (\ref{equiv-3}) 
which reduces $L^2_{\rho}$ to the subspace orthogonal to the two adjoint eigenfunctions 
for the eigenvalues $\Lambda_{\pm}(\Upsilon)$. It follows from 
Proposition 3.2 in \cite{MS1} (proven in \cite{M}) that 
there exists $\beta_0 > 0$ and $C_0 > 0$ such that for every 
$\Lambda \in \C$ satisfying ${\rm Re}(\Lambda) > -\beta_0$  
and every $F \in L^2_{\rho}$, we have 
\begin{equation}
\label{resolvent-estimate-1}
\| (\Lambda - \partial_{X} (L_{\rm KdV} + \Upsilon^2 \partial_{X}^{-2}))^{-1} 
Q_{\rm KP} F \|_{L_{\rho}^2} \leq C_0 \| F \|_{L^2_{\rho}}.
\end{equation}
By the proximity result of Lemma \ref{lem-persistence-resonance}, we can introduce $\mathcal{Q}$, the projection operator for the spectral problem (\ref{equiv-2}) which reduces $L^2_{\rho}$ to 
the subspace orthogonal to the two adjoint 
eigenfunctions for the eigenvalues $\varepsilon^{-3} \lambda_{\pm}(\varepsilon^2 \Upsilon)$. The bound (\ref{resolvent-estimate-1}) and the proximity result suggest that there exists $\beta_0 > 0$ and $C_0 > 0$ such that for every $\Lambda \in \C$ satisfying ${\rm Re}(\Lambda) > -\beta_0$  
and every $F \in L^2_{\rho}$, we have 
\begin{equation}
\label{resolvent-estimate-2}
\| (\Lambda - \mathcal{M})^{-1} 
\mathcal{Q} F \|_{L_{\rho}^2} \leq C_0 \| F \|_{L^2_{\rho}},
\end{equation}
where 
$$
\mathcal{M} :=\partial_{X} (1-\varepsilon^2\partial_{X}^2)^{-1} (L_{\rm KdV} - \varepsilon^2 \partial_X (1 - \Psi) \partial_X + \Upsilon^2 \partial_{X}^{-2}).
$$
Writing again the resolvent identity as 
\begin{align*}
& (\Lambda - \partial_{X} (1-\varepsilon^2\partial_{X}^2)^{-1} (L_{\rm KdV} + \varepsilon^2 L_{\rm pert}  + \Upsilon^2 \partial_{X}^{-2}))^{-1})^{-1} \\
& \qquad = [I - \varepsilon^2 (\Lambda - \mathcal{M})  (\Psi'' - 3 \tilde{\Psi})]^{-1}
(\Lambda - \mathcal{M})^{-1}
\end{align*}
and using smallness of $\varepsilon^2$, we obtain the invertibility of the 
near-identity operator 
$$
[I - \varepsilon^2 (\Lambda - \mathcal{M})  (\Psi'' - 3 \tilde{\Psi})] : L^2_{\rho} \to L^2_{\rho}
$$
for every $\Lambda \in \C$ satisfying ${\rm Re}(\Lambda) > -\beta_0$.  
The bound (\ref{resolvent-estimate-last}) on the unique solution $U$ to the resolvent equation (\ref{resolvent-eq}) follows from the bound (\ref{resolvent-estimate-2}).
\end{proof}

\section{Conclusion}
\label{sec-conclusion}

We have derived two results, which suggest that the transverse perturbations 
to the one-dimensional solitary waves of the CH equation (\ref{eq-CH}) are stable in the time evolution of the CH-KP equation (\ref{eq-CHKP}),  similar to the KP-II theory. First, we proved that the double zero  eigenvalue of the linearized equation related to the translational symmetry breaks under a transverse perturbation into a pair of the asymptotically stable resonances, which are isolated eigenvalues in the exponentially weighted $L^2$ space. Second, we considered the small-amplitude solitary waves governed by the perturbed KP-II equation and proved their linear stability under transverse perturbations. \\

We conclude the paper with a list of further questions. 
First, nonlinear stability of  small-amplitude solitary waves of CH-KP is an open question, see \cite{MS2} for such analysis in the Benney--Luke equation. Second, peaked traveling waves of the CH equation (\ref{eq-CH}) exist but they 
are linearly and nonlinearly unstable in the time evolution in $H^1(\R) \cap W^{1,\infty}(\R)$, see \cite{LP-21,NP}. It would be  interesting to see how 
the peaked profile of the solitary waves breaks under transverse perturbations and whether cusps (waves with infinite slopes at their maximum) would form  in finite time. 
Third, transverse stability of smooth periodic waves and transverse instability of peaked periodic waves 
can be studied based on the stability analysis of the periodic waves in the one-dimensional model, see \cite{GMNP} and \cite{MP20}. 
Finally, hydrodynamical applications of the obtained results are interesting in their own right within modeling of shallow water waves in seas and oceans \cite{Liu2021}.  

\vspace{0.5cm}

{\bf Acknowledgement.} This project was started in June 2022 during a Research in Teams stay at the Erwin Schr\"{o}dinger Institute, Vienna. The authors thank members of stuff of the ESI for support during this work. D. E. Pelinovsky acknowledges the funding of this study provided by Grants No. FSWE-2020-0007 and No. NSH-70.2022.1.5.

\bibliographystyle{siam}

\begin{thebibliography}{99}
	
	\bibitem{Panley} Bhavna, A. K. Pandey, S. Singh, ``Transverse spectral 
	instabilities in Konopelchenko--Dubrovsky equation", Stud. Appl. Math. (2023) in print.
	
	\bibitem{Bruell} H. Borluk, G. Bruell, and D. Nilsson, ``Traveling waves and transverse instability forthe fractional Kadomtsev–Petviashvili equation", 
	Stud. Appl. Math. {\bf 149} (2022) 95--123.
		
	\bibitem{CH} R. Camassa and D.D. Holm, ``An integrable shallow water equation with peaked solitons", \textit{Phys. Rev. Lett.} {\bf 71} (1993), 1661--1664.
	
	\bibitem{Chen-2006} R.M. Chen, ``Some nonlinear dispersive waves arising in compressible hyperelastic plates", \textit{Int. J. Eng. Sci.} {\bf 44} (2006) 1188--1204.
	
	\bibitem{Comech} A. Comech, S. Cuccagna, and D. Pelinovsky, ``Nonlinear instability of a critical traveling wave in the generalized Korteweg-de Vries equation", \textit{SIAM J. Math. Anal.} {\bf 39} (2007) 1--33.
	
	\bibitem{CE-1998} A. Constantin and J. Escher, ``Wave breaking for nonlinear nonlocal shallow water equations", \textit{Acta Math.} 
	{\bf 181} (1998), 229--243.
	
	\bibitem{CE} A. Constantin and J. Escher, ``Well-posedness, global existence, and blowup phenomena for a periodic quasi-linear hyperbolic equation", 
	\textit{Comm. Pure Appl. Math.} {\bf 51} (1998), 475--504. 
	
	\bibitem{CL} A. Constantin and D. Lannes, ``The hydrodynamical relevance of the Camassa--Holm and Degasperis--Procesi equations", 
	\textit{Arch. Ration. Mech. Anal.} {\bf 192} (2009), 165--186.
	
	\bibitem{CM} A. Constantin and L. Molinet, ``Orbital stability of solitary waves for a shallow water equation", Physica D {\bf 157} (2001) 75--89.
	
	\bibitem{CS} A. Constantin and W.A. Strauss, ``Stability of peakons", 
	\textit{Comm. Pure Appl. Math.} {\bf 53} (2000), 603--610.
	
	\bibitem{CS2002}  A. Constantin and W.A. Strauss, ``Stability of the Camassa--Holm solitons”, \textit{J. Nonlinear Sci.} {\bf 12} (2002), 415--422.
	
	\bibitem{DKT} C. De Lellis, T. Kappeler, and P. Topalov, ``Low-regularity solutions of the periodic
	Camassa--Holm equation", \textit{Comm. PDEs} {\bf 32} (2007), 87--126.
	
	\bibitem{FF} A. Fokas and B. Fuchssteiner,  ``Symmpletic structures, their Backlund transform and hereditary symmetries",   Physica D 
	{\bf 4} (1981), 47--66
	
	\bibitem{GalSchn} T. Gallay and G. Schneider, ``KP description of unidirectional long waves. The model case," \textit{Proc. R. Soc. Edinburgh A} {\bf 131} (2001), 885--898.
	
	\bibitem{Gallone} M. Gallone and S. Pasquali, ``Metastability phenomena in two-dimensional rectangular lattices with nearest-neighbour interaction", \textit{Nonlinearity} {\bf 34} (2021), 4983--5044.
	
	\bibitem{GMNP}  A. Geyer, R.H. Martins, F. Natali, and D.E. Pelinovsky, ``Stability of smooth periodic traveling waves in the Camassa-Holm equation",
	\textit{Stud. Appl. Math.} {\bf 148} (2022) 27--61.
	
	\bibitem{Liu2021} G. Gui, Y. Liu, W. Luo, and Z. Yin, ``On a two-dimensional nonlocal shallow-water model", \textit{Adv. Math.} {\bf 392} (2021) 108021 (44 pages).	
	
	\bibitem{Haragus} M. Haragus, J. Li, and D.E. Pelinovsky, ``Counting unstable eigenvalues in Hamiltonian spectral problems via commuting operators", \textit{Comm. Math. Phys.} {\bf 354} (2017) 247--268.
	
	\bibitem{Hristov} N. Hristov and D. E. Pelinovsky, ``Justification of the KP-II approximation in dynamics fo two-dimensional FPU systems", \textit{ZAMP} {\bf 73} (2022) 213 (26 pages).

	\bibitem{Johnson} R.S. Johnson, ``Camassa--Holm, Korteweg--de Vries and related models for water waves", J. Fluid Mech. {\bf 455} (2002) 63--82.
	
	\bibitem{KP-1970} B.B. Kadomtsev and V.I. Petviashvili, ``On the stability of solitary waves in weakly dispersing media", \textit{Sov. Phys. Dokl.} {\bf 15} (1970) 539--541.
	
\bibitem{LP-21}	S. Lafortune and D.E. Pelinovsky, ``Spectral instability of peakons in the b-family of the Camassa-Holm equations", \textit{SIAM J. Math. Anal.} {\bf 54} (2022) 4572--4590
	
	\bibitem{LP-22} S. Lafortune and D.E. Pelinovsky, ``Stability of smooth solitary waves in the $b$-Camassa--Holm equations", \textit{Physica D} {\bf 440} (2022) 133477 (10 pages).
	
	\bibitem{Len1} J. Lenells, ``Stability of periodic peakons", \textit{Int. Math. Res. Not.} {\bf 2004} (2004), 485--499.
	
	\bibitem{Len2} J. Lenells, ``A variational approach to the stability of periodic peakons",
	\textit{J. Nonlinear Math. Phys.} {\bf 11} (2004) 151--163.
	
	
	\bibitem{Lenells} J.~Lenells, ``Traveling wave solutions of the Camassa-Holm equation", \textit{J. Diff. Eq.}  {\bf 217(2)} (2005) 393--430. 
	
	\bibitem{Len4} J. Lenells, ``Stability for the periodic Camassa--Holm
	equation", \textit{Math. Scand.} {\bf 97} (2005) 188--200.
	
	\bibitem{Linares} F. Linares, G. Ponce, and Th. C. Sideris, ``Properties of solutions to the Camassa--Holm
	equation on the line in a class containing the peakons", \textit{Advanced Studies in Pure Mathematics} {\bf 81} (2019), 196--245.
	
	\bibitem{MP20} A. Madiyeva and D.E. Pelinovsky, ``Growth of perturbations to the peaked periodic waves in the Camassa-Holm equation", \textit{SIAM J. Math. Anal.}  {\bf 53} (2021), 3016--3039.
	
	\bibitem{M} T. Mizumachi, ``Stability of line solitons for the KP-II equation in $\mathbb{R}^2$", \textit{Mem. Amer. Math. Soc.} {\bf 238} (2015), no. 1125 (95 pages).
	
\bibitem{MT} T. Mizumachi and N. Tzvetkov, ``Stability of the line soliton of the KP-II equation under periodic transverse perturbations", \textit{Math. Ann.}
{\bf 352} (2012)  659--690.
		
		\bibitem{MS1} T. Mizumachi and Y. Shimabukuro, ``Asymptotic linear stability of Benney--Luke line solitary waves in 2D",
		\textit{Nonlinearity} {\bf 30} (2017) 3419--3465
	
	\bibitem{MS2} T. Mizumachi and Y. Shimabukuro, ``Stability of Benney--Luke line solitary waves in 2 dimensions",
	\textit{SIAM J. Math. Anal.} {\bf 52} (2020)  4238--4283
	
	\bibitem{NP} F. Natali and D.E. Pelinovsky, ``Instability of $H^1$-stable peakons in the Camassa--Holm equation", \textit{J. Diff. Eqs.} 
	{\bf 268} (2020), 7342--7363.
	
	\bibitem{PelSchn} D.E. Pelinovsky and G. Schneider, ``KP-II approximation for a scalar FPU system on a 2D square lattice", \textit{SIAM J. Appl. Math.} {\bf 83} (2023) 	79--98.
	
		\bibitem{PW92} R. Pego and M. I. Weinstein, ``Eigenvalues, and instabilities of solitary waves", \textit{Philos. Trans. Roy. Soc. London Ser. A} {\bf 340} (1992) 47--94.
		
	\bibitem{PW94} R. Pego and M. I. Weinstein, ``Asymptotic stability of solitary waves", \textit{Comm. Math. Phys.} {\bf 164} (1994) 305--349
	
\bibitem{W} A. Welters, ``On explicit recursive formulas in the spectral perturbation analysis of a Jordan block", \textit{SIAM J. Matrix Anal. Appl.} {\bf 32} (2011) 1--22

	\bibitem{Liu2023} Q. Zhang, Y. Xu, and Y. Liu, ``A discontinuous Galerkin method for the Camassa--Holm--Kadomtsev--Petviashvili type
equations", \textit{Numer. Methods PDE} (2023)
\end{thebibliography}

\end{document}